\documentclass[reqno,12pt]{amsart}
\usepackage{amsmath}
\usepackage{amsthm}
\usepackage{amssymb}

\usepackage[utf8]{inputenc}
\usepackage[T1]{fontenc}
\usepackage{lmodern}
\usepackage{eucal}
\pdfoutput=1 %Needed for microtype and ArXiV.
\usepackage{microtype}
\usepackage{url}
\usepackage{enumitem}
\usepackage{tikz-cd}
\usepackage{bbm}
\usepackage{longtable,booktabs}
\usepackage{faktor}

\usepackage[letterpaper, left = 1in, right = 1in, top = 1in, bottom = 1in]{geometry}

\swapnumbers

\usepackage{hyperref}

\usepackage{xcolor}
\hypersetup{
    colorlinks,
    linkcolor={red!50!black},
    citecolor={blue!50!black},
    urlcolor={blue!80!black}
}

\usepackage[capitalise,nameinlink]{cleveref}
\crefname{subsection}{Subsection}{Subsections}
\crefname{subsubsection}{Subsubsection}{Subsubsections}

\theoremstyle{definition}
\newtheorem{theorem}[subsection]{Theorem}
\newtheorem{construction}[subsection]{Construction}
\newtheorem{defn}[subsection]{Definition}
\newtheorem{example}[subsection]{Example}
\newtheorem{ex}[subsection]{Example}
\newtheorem{cor}[subsection]{Corollary}
\newtheorem{lemma}[subsection]{Lemma}
\newtheorem{prop}[subsection]{Proposition}
\newtheorem{rmk}[subsection]{Remark}
\newtheorem{warning}[subsection]{Warning}
\newtheorem{exercise}[subsection]{Exercise}
\newtheorem{fact}[subsection]{Fact}
\newtheorem{hint}[subsection]{Hint}
\newtheorem{question}[subsection]{Question}
\newtheorem{conjecture}[subsection]{Conjecture}
\newtheorem{obs}[subsection]{Observation}
\newtheorem{problem}[subsection]{Problem}
\newtheorem{goal}[subsection]{Goal}

\newtheorem*{rmk*}{Remark}
\newtheorem*{ex*}{Example}
\newtheorem*{theorem*}{Theorem}
\newtheorem*{defn*}{Definition}
\newcommand{\bfE}{\mathbf{E}}

\newcommand{\bbZ}{\mathbb{Z}}

\newcommand{\bbF}{\mathbb{F}}

\newcommand{\bbE}{\mathbb{E}}

\newcommand{\bbR}{\mathbb{R}}
\newcommand{\bbS}{\mathbb{S}}

\newcommand{\bbA}{\mathbb{A}}

\newcommand{\bbone}{\mathbbm{1}}

\newcommand{\calO}{\mathcal{O}}

\newcommand{\calA}{\mathcal{A}}

\newcommand{\calH}{\mathcal{H}}

\newcommand{\ccat}{\mathcal{C}}
\newcommand{\dcat}{\mathcal{D}}

\newcommand{\Sp}{\mathcal{S}\mathrm{p}}

\newcommand{\Alg}{\mathcal{A}\mathrm{lg}}

\newcommand{\CAlg}{\mathcal{C}\mathrm{Alg}}

\newcommand{\SH}{\mathcal{SH}}

\newcommand{\map}{\operatorname{map}}

\newcommand{\Aut}{\operatorname{Aut}}

\newcommand{\Fib}{\operatorname{Fib}}

\newcommand{\Ext}{\operatorname{Ext}}

\newcommand{\colim}{\operatorname*{colim}}

\newcommand{\Sq}{\mathrm{Sq}}

\newcommand{\res}{\operatorname{res}}
\newcommand{\tr}{\operatorname{tr}}

\newcommand{\Th}{\operatorname{Th}}

\newcommand{\Syn}{\operatorname{Syn}}
\newcommand{\infl}{\operatorname{infl}}

\newcommand{\id}{\operatorname{id}}
\newcommand{\Hol}{\operatorname{Hol}}
\newcommand{\Tot}{\operatorname{Tot}}
\newcommand{\bfmap}{F}%{\operatorname{\mathbf{map}}}
\newcommand{\Bock}{\Upsilon}%{\operatorname{Bock}}
\newcommand{\Be}{\operatorname{Be}}

\newcommand{\integers}{\mathbb{Z}}

\newcommand{\h}{\mathrm{h}}

\newcommand{\cl}{\mathrm{cl}}

\newcommand{\et}{\text{\'et}}

\newcommand{\alg}{\mathrm{alg}}

\newcommand{\bs}{{-}}
 
\newcommand{\ul}{\underline}
\newcommand{\ol}{\overline}

\newcommand\xqed[1]{%
  \leavevmode\unskip\penalty9999 \hbox{}\nobreak\hfill
  \quad\hbox{#1}}
\newcommand\tqed{\xqed{$\triangleleft$}}

\makeatletter
\DeclareRobustCommand{\tvdots}{%
  \vbox{\baselineskip4\p@\lineskiplimit\z@\kern0\p@\hbox{.}\hbox{.}\hbox{.}}}
\makeatother

\makeatletter
\newcommand{\xRightarrow}[2][]{\ext@arrow 0359\Rightarrowfill@{#1}{#2}}
\makeatother

\newcommand{\rightsim}{\xrightarrow{\raisebox{-1pt}{\tiny{$\sim$}}}}

\makeatletter
\@namedef{subjclassname@2020}{\textup{2020} Mathematics Subject Classification}
\makeatother

\usepackage{etoolbox}
\makeatletter
\patchcmd{\@maketitle}{\global\topskip42\p@\relax}
  {\global\topskip42\p@\relax \vspace*{-4.25\baselineskip}}
  {}{}
\makeatother

\usepackage{cite}
\makeatletter
\newcommand{\citecomment}[2][]{\citen{#2}#1\citevar}
\newcommand{\citeone}[1]{\citecomment{#1}}
\newcommand{\citetwo}[2][]{\citecomment[,~#1]{#2}}
\newcommand{\citevar}{\@ifnextchar\bgroup{;~\citeone}{\@ifnextchar[{;~\citetwo}{]}}}
\newcommand{\citefirst}{\@ifnextchar\bgroup{\citeone}{\@ifnextchar[{\citetwo}{]}}}

\makeatother

\usepackage{todonotes}

\begin{document}

\title{Kahn--Priddy theorems via the norm}
\author[William Balderrama]{William Balderrama}
%\date{\today}
\iffalse
\subjclass[2020]{
14F42, % Motivic cohomology; motivic homotopy theory [See also 19E15]
55P42, % Stable homotopy theory, spectra
55P91, % Equivariant homotopy theory in algebraic topology [See also 19L47]
%55P92, % Relations between equivariant and nonequivariant homotopy theory in algebraic topology
55T15, % Adams spectral sequences
%19L20, % J-homomorphism, Adams operations
%19L47, % Equivariant K-theory 
%55P42, % Stable homotopy theory, spectra
%55Q91. % Equivariant homotopy groups
}
\fi
\begin{abstract}
We revisit the Kahn--Priddy theorem from the perspective of modern equivariant homotopy theory. This allows for a short proof that may be applied in other settings with sufficiently robust analogues of multiplicative norms and the Adams isomorphism. We illustrate this by establishing new Kahn--Priddy theorems in $L_n$ and $L_n^f$-local homotopy theory, motivic homotopy theory, and synthetic homotopy theory.
\end{abstract}

\maketitle

\section{Introduction}

\subsection{Background}

The \emph{Kahn--Priddy theorem} \cite{kahnpriddy1972applications} asserts that the transfer
\[
\tr\colon \Sigma^\infty_+ BC_2 \to \bbS
\]
induces a surjection onto the $2$-torsion in the stable stems $\pi_\ast \bbS$. As observed by Adams \cite{adams1973kahn}, Kahn and Priddy's construction proves more: there is a map
\[
\Omega^\infty \bbS \to \Omega^\infty \Sigma^\infty BC_2
\]
of spaces whose composite with $\Omega^\infty\tr$ is a $2$-primary equivalence on connected covers.

Several variations and proofs of this theorem have since appeared. We note in particular an approach sketched by Segal \cite{segal1974operations}, interpreted through the lens of $C_2$-equivariant homotopy theory by Crabb \cite[p.\ 27]{crabb1980z2} and apparently independently by Araki--Iriye \cite[Section 5]{arakiiriye1982equivariant}, who attribute this derivation to H.\ Minami. In equivariant terms, it is derived as a consequence of basic properties of the squaring operation
\[
\Sq\colon \pi_\ast \bbS \to \pi_{\ast(1+\sigma)}^{C_2}\bbS_{C_2}
\]
in the $C_2$-equivariant stable stems. It is by now well understood that this squaring operation exists not just as an operation in the equivariant stable stems, but as a special case of the effect in homotopy groups of the \emph{Hill--Hopkins--Ravenel norm} \cite{hillhopkinsravenel2016nonexistence}
\[
N_e^{C_2}\colon \Sp \to \Sp^{C_2}.
\]
Let us summarize their proof in this language. Fix a prime $p$ and consider the diagram:
\begin{center}\begin{tikzcd}[column sep=huge, row sep=0.8cm]
&\Omega^\infty\Sigma^\infty_+BC_p\ar[dr,"\Omega^\infty\tr"]\ar[d]\\
\Omega^\infty \bbS\ar[r,"N_e^{C_p} - \infl_e^{C_p}"]\ar[ur,dashed]\ar[dr,"0"]&\Omega^\infty \bbS^{C_p}\ar[r,"\res^{C_p}_e"]\ar[d]&\Omega^\infty \bbS\\
&\Omega^\infty \bbS
\end{tikzcd}.\end{center}
The norm and inflation define pointed maps $\Omega^\infty \bbS \to \Omega^\infty \bbS^{C_p}$ from the underlying infinite loop space of the sphere spectrum to that of the fixed points of the $C_p$-equivariant sphere spectrum. These are equalized by geometric fixed points, so their difference lifts to its fiber, and the Adams isomorphism identifies this as a factorization of the composite $\res_e^{C_p}\circ (N_e^{C_p} - \infl_e^{C_p})$ through the transfer. This composite induces $-1$ in positive-dimensional homotopy groups, and so this lift provides a section to the transfer on connected covers.

\pagebreak[0]

Let us also recast the statement of the Kahn--Priddy theorem from the perspective of this proof. Given a category $\ccat$ with finite limits and a grouplike $\bbE_\infty$ ring $R \in \ccat$, define the map
\begin{equation}\label{eq:phip}
\varphi_p\colon R \to R,\qquad \varphi_p(x) = x^p - x.
\end{equation}
Then $\varphi_p$ is a map of pointed objects which is an equivalence after looping once, i.e.\
\[
\Omega \varphi_p\colon \Omega R \rightsim \Omega R
\]
is an equivalence. The above argument shows the following.

\begin{theorem}[``The Kahn--Priddy theorem'']\label{thm:classicalkahnpriddy}
There is a preferred lift in the diagram
\begin{center}\begin{tikzcd}
&\Omega^\infty\Sigma^\infty_+ BC_p\ar[d,"\Omega^\infty\tr"]\\
\Omega^\infty \bbS\ar[r,"\varphi_p"]\ar[ur,dashed]&\Omega^\infty \bbS
\end{tikzcd}\end{center}
of pointed spaces.
\qed
\end{theorem}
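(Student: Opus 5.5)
The plan is to build the lift from the diagram in the introduction, using genuine $C_p$-equivariant homotopy theory together with the Hill--Hopkins--Ravenel norm. There are two pointed maps of spaces
\[
N_e^{C_p},\ \infl_e^{C_p}\colon \Omega^\infty\bbS \to \Omega^\infty \bbS^{C_p}.
\]
The first is the unstable multiplicative norm $x\mapsto N_e^{C_p}x$ on $\Omega^\infty$ of categorical fixed points; it comes from the norm as a symmetric monoidal functor $\Sp\to\Sp^{C_p}$ restricted to endomorphisms of the unit. The second is the map of spectra $\bbS\to\bbS^{C_p}$ given by inflation. Both maps are pointed: the norm sends the basepoint $1\in\Omega^\infty\bbS$ to $1$ when we use the multiplicative basepoint on the grouplike space. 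The convention that matches $\varphi_p(x)=x^p-x$ pointed at the unit is to work on $\Omega^\infty\bbS$ with its $\bbE_\infty$ ring structure, as in \eqref{eq:phip}. Using the infinite loop structure on the target, form the difference $D=N_e^{C_p}-\infl_e^{C_p}$.

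Next, recall the isotropy separation cofiber sequence
\[
(E{C_p}_+\otimes \bbS_{C_p})^{C_p}\to \bbS^{C_p}\to \Phi^{C_p}\bbS_{C_p}\simeq\bbS,
\]
and the Adams isomorphism identifying the first term with $\Sigma^\infty_+BC_p$. Under this identification the composite $\Sigma^\infty_+BC_p\to\bbS^{C_p}\xrightarrow{\res}\bbS$ is the transfer $\tr$. Geometric fixed points are symmetric monoidal and satisfy $\Phi^{C_p}N_e^{C_p}\simeq \id$. The inflation composite $\Phi^{C_p}\circ\infl_e^{C_p}$ is also the identity. Hence $\Phi^{C_p}\circ D$ comes with a preferred nullhomotopy, and $D$ lifts canonically to $\Omega^\infty\Sigma^\infty_+BC_p$, the fiber of $\Omega^\infty$ of the geometric fixed point map. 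The word "preferred" in the statement refers to this canonical choice of nullhomotopy, built from the two identifications of $\Phi^{C_p}$.

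Finally, compose with restriction. Since $\res^{C_p}_e N_e^{C_p}x\simeq x^{\otimes p}=x^p$ and $\res^{C_p}_e\infl_e^{C_p}x\simeq x$, we get $\res\circ D\simeq\varphi_p$ as pointed maps. This gives the lift through $\Omega^\infty\tr$.

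The main obstacle is making the norm a genuine map of spaces $\Omega^\infty\bbS\to\Omega^\infty\bbS^{C_p}$, compatible with both $\Phi^{C_p}$ and $\res$ up to coherent homotopy. The latter compatibility is needed at the level of spaces, not just on $\pi_0$, so that the two homotopies assemble into a single one. I would handle this by realizing all three functors $N_e^{C_p}$, $\Phi^{C_p}$ and $\res$ as symmetric monoidal functors of $\infty$-categories. The map on spaces is then obtained by applying them to the mapping spaces out of the unit, so that the natural equivalences $\Phi^{C_p}N_e^{C_p}\simeq\id$ and $\res N_e^{C_p}\simeq(-)^{\otimes p}$ induce the required homotopies functorially.
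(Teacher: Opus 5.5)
Your construction is the paper's argument: the introductory sketch, carried out in the proof of \cref{thm:lkpg} with $L=\id$ and $A=S^0$, where $\rho_{S^0}$ is just inflation $\bbS\to\bbS^{C_p}$. The one error is the basepoint: since $\varphi_p(1)=1-1=0$ and $(N_e^{C_p}-\infl_e^{C_p})(1)=0$, nothing here is pointed at the unit, so everything must be pointed at $0$ instead — $N_e^{C_p}$ is pointed there because $N_e^{C_p}(0)\simeq 0$ (it carries zero maps to zero maps), inflation is additive, and the fiber of $\Omega^\infty\Phi^{C_p}$ is taken over $0$ (this is also the basepoint at which $\Omega\varphi_p$ is an equivalence).
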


Here, by ``preferred lift'' we mean only that a particular lift is constructed rather than abstractly proved to exist. The purpose of this paper is to point out that this proof is sufficiently formal as to hold in any other setting where one has sufficiently robust analogues of the norm and the Adams isomorphism. Rather than attempt to axiomatize this situation, we shall instead illustrate it with examples, establishing new Kahn--Priddy theorems in $L_n$ and $L_n^f$-local homotopy theory, motivic homotopy theory, and synthetic homotopy theory.

\subsection{A smashing Kahn--Priddy theorem}

As observed by Kuhn \cite{kuhn1989morava, clausenmathew2017short}, the theory of the Bousfield--Kuhn functor implies that the transfer
\[
\tr\colon \Sigma^\infty_+ BC_p \to \bbS
\]
admits a \emph{stable} splitting after $T(n)$-localization. This is a purely monochromatic phenomenon: it cannot hold $L_n$ or $L_n^f$-locally (except when $n = 0$). For example, the $L_1$-local transfer for $p=2$ may be identified as the map
\[
L_1\Sigma^\infty_+ BC_2 \simeq L_1 \bbS \oplus \Sigma^{-1}L_1\bbS/(2^\infty) \to L_1 \bbS
\]
which is multiplication by $2$ on the first summand and the boundary on the second summand, and this does not admit a stable splitting. Nonetheless this map still induces a surjection on positive-dimensional homotopy groups. This is not a coincidence.

\begin{theorem}[\ref{thm:lkpg}]\label{introthm:smashingkp}
Let $L$ be a smashing localization of spectra. Then there is a preferred lift in the diagram
\begin{center}\begin{tikzcd}
&\Omega^\infty L \Sigma^\infty_+ BC_p\ar[d,"\Omega^\infty L\tr"]\\
\Omega^\infty L\bbS\ar[r,"\varphi_p"]\ar[ur,dashed]&\Omega^\infty L\bbS
\end{tikzcd}\end{center}
of pointed spaces.
\tqed
\end{theorem}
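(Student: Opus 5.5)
We run the proof of \cref{thm:classicalkahnpriddy} inside $L$-local genuine $C_p$-spectra. Let $L$ be smashing, given by $L X = X \otimes L\bbS$. We work with $L\bbS_{C_p} = \infl(L\bbS)$ and the module category $\mathrm{Mod}_{L\bbS_{C_p}}(\Sp^{C_p})$. Since $L\bbS$ is an idempotent $\bbE_\infty$ ring, this module category is the $L$-local version of $\Sp^{C_p}$, i.e.\ the full subcategory of $C_p$-spectra whose underlying and geometric fixed point spectra are $L$-local.

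\textbf{Step 1: the norm.} The HHR norm $N_e^{C_p}\colon \CAlg(\Sp) \to \CAlg(\Sp^{C_p})$ sends $L\bbS$ to $N_e^{C_p} L\bbS$. Because $L\bbS$ is idempotent, so is $N_e^{C_p}L\bbS$. We then compare it with $L\bbS_{C_p}$ on geometric fixed points:
\[
\Phi^{C_p}N_e^{C_p}L\bbS \simeq L\bbS, \qquad \Phi^e N_e^{C_p} L\bbS \simeq L\bbS^{\otimes p} \simeq L\bbS.
\]
In each case the identification uses smashingness. It follows that the unit map $\bbS_{C_p} \to N_e^{C_p}L\bbS$ factors as an equivalence of $\bbE_\infty$ rings $L\bbS_{C_p}\simeq N_e^{C_p}L\bbS$.

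We take the norm map to be the composite of $\Omega^\infty$ with the multiplicative norm on $\bbE_\infty$ spaces,
\[
\Omega^\infty L\bbS \to \Omega^\infty (N_e^{C_p}L\bbS)^{C_p} \simeq \Omega^\infty (L\bbS_{C_p})^{C_p}.
\]
This is a pointed map: it is a map of multiplicative monoids, so it preserves the unit, and we subtract the inflation $\infl$ of the unit to make the difference pointed, exactly as in the classical case. Its restriction to the underlying space is the $p$-th power map $x\mapsto x^p$. Its composite with $\Phi^{C_p}$ is the identity, since $\Phi^{C_p}N_e^{C_p}\simeq \id$ holds naturally.

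\textbf{Step 2: the fiber sequence.} Taking $C_p$-fixed points of the isotropy separation cofiber sequence
\[
E{C_p}_+\otimes L\bbS_{C_p} \to L\bbS_{C_p} \to \widetilde{E}C_p \otimes L\bbS_{C_p}
\]
gives a fiber sequence of spectra. Its right-hand term is $\Phi^{C_p}L\bbS_{C_p}=L\bbS$. For its left-hand term, the Adams isomorphism combined with smashingness gives
\[
(E{C_p}_+\otimes L\bbS_{C_p})^{C_p}\simeq (\Sigma^\infty_+BC_p)\otimes L\bbS = L\Sigma^\infty_+BC_p.
\]
Under this identification, the composite with restriction to the underlying spectrum is $L\tr$.

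\textbf{Step 3: the lift.} As in the classical argument, $N - \infl$ becomes null after $\Phi^{C_p}$, with a preferred nullhomotopy coming from the canonical identifications $\Phi^{C_p}N\simeq\id\simeq\Phi^{C_p}\infl$. This gives a preferred lift to $\Omega^\infty L\Sigma^\infty_+BC_p$. Composing the lift with $\Omega^\infty L\tr$ recovers $\res\circ(N-\infl)$, which is $x^p - x = \varphi_p$.

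\textbf{Main obstacle.} The hardest part is Step 1. There we must show that the norm of an $L$-local spectrum is $L$-local in the genuine sense, and identify $N_e^{C_p}L\bbS$ with $L\bbS_{C_p}$ compatibly with the relevant structure. The plan is to use that $\Phi^{C_p}$ and $\Phi^e$ are jointly conservative, together with idempotence. A secondary task is to check that the Adams isomorphism commutes with $-\otimes L\bbS$. This follows because both sides are colimit-preserving in the input, and $L$ is smashing.
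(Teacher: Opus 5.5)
Your proposal is correct and is essentially the paper's argument, specialized to $A=S^0$: you identify $N_e^{C_p}(L\bbS)\simeq\infl_e^{C_p}(L\bbS)$ via $\otimes$-idempotence and joint conservativity of underlying and geometric fixed points, as in \cref{lem:normsmash}, and then lift $N_e^{C_p}-\infl_e^{C_p}$ through the fiber of $\Phi^{C_p}$, which isotropy separation and the Adams isomorphism identify with $L\Sigma^\infty_+BC_p$. One small correction: the relevant basepoint is $0$, and the norm map is pointed because $N_e^{C_p}(0)\simeq 0$, not because it preserves the multiplicative unit; both $N_e^{C_p}$ and $\infl_e^{C_p}$ are therefore already pointed, and nothing needs to be subtracted ``to make the difference pointed.''
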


\begin{cor}[\ref{prop:sigmap}]
Let $L$ be a $p$-local smashing localization of spectra. Then the transfer induces a split surjection
\[
\pi_k L\Sigma^\infty B\Sigma_p \to \pi_k L\bbS
\]
for all $k > 0$.
\tqed
\end{cor}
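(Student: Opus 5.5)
The corollary follows from \cref{introthm:smashingkp} in two steps. First, turn the space-level lift into a splitting on homotopy groups over $BC_p$. Second, pass from $BC_p$ to $B\Sigma_p$ using a standard transfer argument, which is available because $L$ is $p$-local.

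\emph{Step 1: a splitting over $BC_p$.} The lift in \cref{introthm:smashingkp} is a pointed map $s\colon \Omega^\infty L\bbS \to \Omega^\infty L\Sigma^\infty_+BC_p$ with $\Omega^\infty L\tr \circ s = \varphi_p$. Apply $\pi_k$ for $k>0$. Since $\Omega\varphi_p$ is an equivalence, $\pi_k\varphi_p$ is an automorphism $\alpha$ of $\pi_k L\bbS$. Concretely, on $\pi_k$ for $k>0$ the $p$-th power map is induced by the multiplicative structure and is nullhomotopic on spheres after looping, so $\alpha=-1$. We do not need the exact form of $\alpha$, only that it is invertible. Then $s_*\circ\alpha^{-1}$ is a section of $\tr_*\colon \pi_k L\Sigma^\infty_+BC_p \to \pi_k L\bbS$.

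Next, remove the basepoint summand. Write $\Sigma^\infty_+BC_p \simeq \bbS\oplus\Sigma^\infty BC_p$. On the $\bbS$ summand the transfer restricts to the index, which is multiplication by $p$ after composing with the inclusion of the base point. Since $\pi_k L\bbS$ is a $p$-local group, for $k>0$ we want to show that the reduced transfer $\pi_k L\Sigma^\infty BC_p\to\pi_kL\bbS$ is surjective as well. I would argue this as follows. The image of $\pi_k L\bbS$ under the $\bbS$-summand contributes $p\cdot\pi_kL\bbS$. Hence the reduced transfer has image $I$ with $I+p\pi_k L\bbS=\pi_kL\bbS$. Iterating gives $I + p^m\pi_kL\bbS = \pi_kL\bbS$ for all $m$.

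This is the main obstacle. The group $\pi_k L\bbS$ need not be finitely generated or $p$-complete, so the Nakayama-type argument above does not immediately conclude. A more robust fix is to build the reduced lift directly. I would first try to arrange this at the level of the construction: replace $\varphi_p$ by $\varphi_p$ minus the basepoint component, using that $N - \infl$ already lands in the fiber of geometric fixed points. Failing that, I would simply argue that the corollary is stated for $\Sigma^\infty B\Sigma_p$ and check that the splitting in \cref{introthm:smashingkp} respects the reduced summand when restricted to $\pi_k$, $k>0$. The reason is that the composite of $s$ with the collapse $\Sigma^\infty_+BC_p\to\bbS$ is given by a degree-type operation on $\pi_k$, $k>0$, that one corrects by subtracting.

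\emph{Step 2: from $C_p$ to $\Sigma_p$.} The transfer for $C_p$ factors as
\[
\Sigma^\infty_+BC_p \to \Sigma^\infty_+B\Sigma_p \to \bbS,
\]
the first map induced by the inclusion $C_p\subset\Sigma_p$, and the second the $\Sigma_p$-transfer. The same factorization holds for the reduced versions after the above correction. Hence surjectivity on $\pi_k$ for $B\Sigma_p$ follows from surjectivity for $BC_p$. Composing the section from Step 1 with the map induced by $BC_p\to B\Sigma_p$ gives a splitting of $\pi_kL\Sigma^\infty B\Sigma_p\to\pi_kL\bbS$.

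The remaining point is to choose the transfer conventions so that this factorization commutes. One takes the transfer for the covering $E\Sigma_p/C_p\to B\Sigma_p$ composed with that for $E\Sigma_p\to E\Sigma_p/C_p$. Its "index" is $(p-1)!$, a $p$-local unit. This is harmless, since $p$-locality of $L$ allows dividing by $(p-1)!$.
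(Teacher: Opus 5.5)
Your Step~1 has a genuine gap at exactly the point you call ``the main obstacle'': you never actually pass from $L\Sigma^\infty_+BC_p$ to the reduced summand, and neither proposed fix works as stated. For $k\geq 1$, write $s_\ast=(a,b)$ with respect to $\pi_kL\Sigma^\infty_+BC_p\cong\pi_kL\bbS\oplus\pi_kL\Sigma^\infty BC_p$. Both components are group homomorphisms, and $pa+\widetilde{\tr}_\ast b=-1$. The lift built from $N_e^{C_p}-\infl_e^{C_p}$ lands in the fiber of $\Phi^{C_p}$, which is all of $L\Sigma^\infty_+BC_p$, so nothing forces $a=0$. Subtracting the basepoint component replaces $s_\ast$ by $(0,b)$, whose composite with the transfer is $-(1+pa)$. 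Proving that this is invertible is the entire content of the step. You are right that the image-level Nakayama argument cannot do it: for a divisible group such as $\bbZ/p^\infty$, the subgroup $I=0$ satisfies $I+p\pi=\pi$. But the property you do invoke, $p$-locality of $\pi_kL\bbS$, is also insufficient: on a $\mathbb{Q}$-vector space one could have $a=-1/p$ and $1+pa=0$.

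The missing input is that $\pi_kL\bbS$ is $p$-power torsion for $k\neq 0$. Since $L$ is smashing, $L\bbS\otimes H\mathbb{Q}\simeq L(H\mathbb{Q})$ is a localization of $H\mathbb{Q}$, hence $H\mathbb{Q}$ or $0$. So $\pi_\ast L\bbS\otimes\mathbb{Q}$ is concentrated in degree $0$, and $p$-locality makes all torsion $p$-primary. Now if $p^Nx=0$, then $(pa)^Nx=a^N(p^Nx)=0$, so $pa$ is locally nilpotent. Hence $1+pa$ is an automorphism with inverse $\sum_{j\geq 0}(-pa)^j$, and $-b\circ(1+pa)^{-1}$ is a section of the reduced transfer. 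This is exactly how the paper removes the basepoint summand. It does so after replacing $\Sigma_p$ by $\Hol(C_p)$ via the $p$-local equivalence $i\circ\tr\colon\Sigma^\infty_+B\Sigma_p\to\Sigma^\infty_+B\Hol(C_p)$, a detour it needs for the delooped statement. Your Step~2 is a valid and more direct alternative for this corollary. The double coset formula gives $\tr^{\Sigma_p}_e\circ\mathrm{inc}=(p-1)!\,\tr^{C_p}_e$, or exactly $\tr^{C_p}_e$ if one uses the transfer of the $p$-sheeted cover $B\Sigma_{p-1}\to B\Sigma_p$ followed by the collapse. Moreover, $\mathrm{inc}$ respects the basepoint splitting.
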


We actually prove something a little stronger. For any spectrum $Y$, functoriality of the smash product provides a natural pointed map
\[
P\colon \Omega^\infty Y = \map_{\Sp}(\bbS,Y) \to \map_{\Sp}(\bbS^{\otimes p},Y^{\otimes p})\simeq\map_{\Sp}(\bbS,Y^{\otimes p})\simeq \Omega^\infty(Y^{\otimes p}).
\]
On the other hand, if $A$ is a pointed space, then there is a reduced diagonal
\[
\widetilde{\Delta}\colon A \to A^{\times p} \to A^{\wedge p}.
\]
The resulting natural transformation
\[
\varphi_p = P - \Omega^\infty L\Sigma^\infty \widetilde{\Delta} \colon \Omega^\infty L\Sigma^\infty A \to \Omega^\infty L\Sigma^\infty A^{\otimes p}
\]
is seen to specialize to \cref{eq:phip} when $A = S^0$, and we produce a natural lift in the diagram
\begin{equation}\label{eq:naturalkp}
\begin{tikzcd}
&\Omega^\infty L \Sigma^\infty A^{\otimes p}_{\h C_p}\ar[d,"\Omega^\infty L\tr"]\\
\Omega^\infty L\Sigma^\infty A\ar[r,"\varphi_p"]\ar[ur,dashed,"H"]&\Omega^\infty L\Sigma^\infty A^{\otimes p}
\end{tikzcd}.
\end{equation}
This refinement allows us to derive an $L$-local analogue of Kuhn's delooped Kahn--Priddy theorem \cite[Proposition 2.1]{kuhn1982geometry}, see \cref{prop:sigmap}.

\begin{rmk}\label{rmk:klein}
Taking $p=2$ and $L$ to be the identity, the lift in \cref{eq:naturalkp} is a natural transformation
\[
\Omega^\infty \Sigma^\infty A \to \Omega^\infty \Sigma^\infty A^{\otimes 2}_{\h C_2}
\]
which agrees with the model for the \emph{stable Hopf invariant} constructed by Klein in \cite{klein2026on}, and is thus a model for the first attaching map in the Goodwillie tower of the identity. In particular,
\[
P_2^L(A) = \Fib\left(H\colon\Omega^\infty L\Sigma^\infty A \to \Omega^\infty L\Sigma^\infty A^{\otimes 2}_{\h C_2}\right)
\]
\emph{defines} an ``$L$-local metastable approximation to $A$''. It would be interesting to know whether this construction may be characterized by a universal property and to compute examples.
\tqed
\end{rmk}

\subsection{A motivic Kahn--Priddy theorem}

Our next analogue of the Kahn--Priddy theorem is in the setting of motivic homotopy theory. Let $B$ be a qcqs scheme with $p \in \calO_B^\times$, and write
\[
\Sigma^\infty_+ : \calH(B) \leftrightarrows \SH(B): \Omega^\infty
\]
for the adjunction between $B$-motivic spaces and spectra. Write
\[
B_{\et}C_p \in \SH(B)
\]
for the classifying space of \'etale $C_p$-torsors, constructed by Morel and Voevodsky \cite{morelvoevodsky1999a1}. There is a transfer map
\[
\tr\colon \Sigma^\infty_+ B_{\et}C_p \to \bbone_B
\]
of motivic spectra; to be explicit, we take the transfer which arises from the motivic Adams isomorphism of Gepner--Heller \cite{gepnerheller2023tom}. The theory of norms in the motivic homotopy theory of stacks developed by Bachmann \cite{bachmann2022motivic} allows us to interpret the proof of the Kahn--Priddy theorem in the language of motivic homotopy theory, leading to the following.

\begin{samepage}
\begin{theorem}[\ref{thm:motivickp}]\label{introthm:motivickp}
There is a preferred lift in the diagram
\begin{center}\begin{tikzcd}
&\Omega^\infty \Sigma^\infty_+ B_{\et}C_p\ar[d,"\Omega^\infty\tr"]\\
\Omega^\infty\bbone_B\ar[r,"\varphi_p"]\ar[ur,dashed]&\Omega^\infty\bbone_B
\end{tikzcd}\end{center}
of pointed motivic spaces.
\tqed
\end{theorem}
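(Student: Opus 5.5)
We transport the equivariant argument from the introduction to the motivic setting. Write $\SH^{C_p}(B)$ for the genuine $C_p$-equivariant motivic stable category, i.e.\ $\SH$ of the stack $B \times BC_p$ in the sense of Bachmann's framework, with its genuine structure given by the finite étale cover $B \to B\times BC_p$. In this category we need four operations on the unit:
\begin{itemize}
\item the Bachmann norm $N_e^{C_p}\colon \SH(B)\to \SH^{C_p}(B)$, i.e.\ the norm $p_\otimes$ along the finite étale map $p\colon B \to B\times BC_p$;
\item the inflation $\infl_e^{C_p}$, i.e.\ pullback along $B\times BC_p \to B$;
\item the restriction $\res^{C_p}_e$, i.e.\ pullback along $p$;
\item the geometric fixed points $\Phi^{C_p}$, obtained by inverting $\widetilde{E C_p}$ (the cofiber of $E_{\et}C_p{}_+ \to S^0$).
\end{itemize}
Passing to $\Omega^\infty$ of categorical fixed points $(-)^{C_p}$ gives pointed maps
\[
N,\ \infl\colon \Omega^\infty \bbone_B \to \Omega^\infty (\bbone_{C_p})^{C_p}
\]
of pointed motivic spaces. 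These are obtained on mapping spaces by applying the symmetric monoidal functors $p_\otimes$ and $\infl$ to endomorphisms of units. Both maps are pointed because each functor is symmetric monoidal and so preserves the unit and basepoint.

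The proof then has four steps.

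\begin{enumerate}
\item \emph{Identify restrictions.} Show that $\res^{C_p}_e\circ N \simeq (x\mapsto x^p)$ and $\res^{C_p}_e \circ \infl\simeq \id$ as maps $\Omega^\infty\bbone_B\to\Omega^\infty\bbone_B$. The first follows from the base change formula for norms: restricting $p_\otimes$ along $p$ yields the norm along the trivial cover $\coprod_{C_p} B \to B$, which is the $p$-fold tensor power. The second is immediate from $\infl\circ p = \id$.

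\item \emph{Compare geometric fixed points.} Show that $\Phi^{C_p}\circ N \simeq \Phi^{C_p}\circ \infl$ as pointed maps into $\Omega^\infty \Phi^{C_p}\bbone$. This is the motivic analogue of the diagonal property of the norm, $\Phi^{C_p}N_e^{C_p}X\simeq X$, which Bachmann proves for norms of motivic spectra along finite étale maps of stacks. One needs more than an equivalence of functors: the equivalence must be symmetric monoidal, so that on endomorphisms of the unit it agrees with $\Phi^{C_p}\infl$. Both $\Phi^{C_p}\circ p_\otimes$ and $\Phi^{C_p}\circ \infl$ are symmetric monoidal and colimit-compatible in the appropriate sense, and both are the identity on $\Sigma^\infty_+$ of smooth schemes. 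I would argue that this forces them to agree, using that $\SH(B)$ is generated under sifted colimits and desuspensions by $\Sigma^\infty_+$ of smooth schemes, together with the universal property of $\SH$.

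\item \emph{Lift to the fiber.} By step 2, the difference $N-\infl$ is nullhomotopic after applying $\Phi^{C_p}$, so it lifts canonically to $\Omega^\infty$ of the fiber of $\bbone^{C_p}\to(\Phi^{C_p}\bbone)$. That fiber is $(E_{\et}C_{p+}\otimes\bbone)^{C_p}$, which the Gepner--Heller motivic Adams isomorphism identifies with $\Sigma^\infty_+B_{\et}C_p$. Under this identification the composite with $\res^{C_p}_e$ is precisely the transfer $\tr$, by our choice of $\tr$.

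\item \emph{Conclude.} Combining steps 1 and 3, the lift composed with $\Omega^\infty\tr$ is $x^p - x = \varphi_p$, as pointed maps of motivic spaces.
\end{enumerate}

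I expect step 2 to be the main obstacle. The difficulty is producing the identification $\Phi^{C_p}N\simeq\Phi^{C_p}\infl$ at the level of maps of motivic spaces, i.e.\ naturally in smooth $B$-schemes. The alternative is to work entirely with the presheaf $U\mapsto \map(\bbone_U,\bbone_U)$ and check everything sectionwise in $U$, using that norms and the Adams isomorphism are compatible with smooth base change. I would take this sectionwise approach, reducing everything to the categorical statements over each $U$.
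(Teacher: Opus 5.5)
Your outline is the paper's argument. The difference of norm and inflation into $\Omega^\infty(\bbone_{C_p})^{C_p}$ is killed by $\Phi^{C_p}$, so it lifts to the isotropy-separation fiber, which Gepner--Heller identify with $\Sigma^\infty_+B_{\et}C_p$. Restriction then turns the composite into $x^p-x$.

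Where you differ is Step 2 and the construction of maps of motivic spaces. The paper works in Bachmann's formalism for $i\colon B\to B/\!/C_p$ and $q\colon B/\!/C_p\to B$. There $\Phi^{C_p}$ on $\SH^{C_p}(B)$ is itself the norm $q_\otimes$, and $q_\otimes$ on $\calH^{C_p}(B)$ is $(\bs)^{C_p}$. This has three consequences:
\begin{itemize}
\item $\Phi^{C_p}\circ N_e^{C_p}\simeq(q\circ i)_\otimes\simeq\id$ is just functoriality of norms, coherent and compatible with base change.
\item $N_e^{C_p}\colon\Omega^\infty Y\simeq(N_e^{C_p}\Omega^\infty Y)^{C_p}\to\Omega^\infty(N_e^{C_p}Y)^{C_p}$ is a map of motivic spaces by construction, via the mate of the unstable norm.
\item The inflation side is $\Omega^\infty\Sigma^\infty_+\rho_A$, with $\rho_A\colon\infl_e^{C_p}A\to N_e^{C_p}A$ adjoint to $A\simeq(N_e^{C_p}A)^{C_p}$. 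Its geometric fixed points are the identity by adjunction.
\end{itemize}
So the obstacle you anticipate dissolves, no sectionwise bookkeeping is needed, and naturality in a motivic space $A$ comes for free. What the paper imports instead is the identification $q_\otimes X\simeq(\widetilde{\bfE}C_p\otimes X)^{C_p}$, which is needed to run isotropy separation.

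Since you define $\Phi^{C_p}$ via $\widetilde{\bfE}C_p$, you must supply that identification yourself, and your sketch understates the work. The universal property of $\SH(B)$ concerns colimit-preserving symmetric monoidal functors, but $\Phi^{C_p}\circ p_\otimes$ is a priori only sifted-colimit-preserving. You would need one of the following:
\begin{itemize}
\item additivity, via distributivity of norms over sums plus $\Phi^{C_p}$ killing the induced summands; or
\item a Bachmann--Hoyois-type uniqueness for sifted-colimit-preserving extensions. This requires agreement on $\mathrm{Sm}_{B+}$, i.e.\ compatibly with collapse maps, not just on $\mathrm{Sm}_B$.
\end{itemize}
Either way you also need $\Phi^{C_p}\Sigma^\infty_+Y\simeq\Sigma^\infty_+Y^{C_p}$, naturally and monoidally, for smooth $C_p$-schemes $Y$.

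One small correction: the basepoint of $\Omega^\infty\bbone_B$ is the zero map, not the unit. Pointedness of $N$ therefore comes from $p_\otimes(0)\simeq 0$, which holds because $p$ is surjective. Symmetric monoidality alone does not give it.
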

\end{samepage}

\begin{cor}[\ref{cor:motivickphtpy}]
The transfer induces a split surjection 
\[
\ul{\pi}_k \Sigma^\infty_+ B_{\et}C_p \to \ul{\pi}_k \bbone_B
\]
of stable homotopy sheaves for $k\geq 1$.
\qed
\end{cor}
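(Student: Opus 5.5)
The plan is to deduce the corollary from \cref{introthm:motivickp} by passing to homotopy sheaves. The lift $s\colon \Omega^\infty\bbone_B \to \Omega^\infty\Sigma^\infty_+B_{\et}C_p$ satisfies $\Omega^\infty\tr\circ s \simeq \varphi_p$ as pointed maps of motivic spaces. Looping once gives
\[
\Omega(\Omega^\infty\tr)\circ\Omega s \simeq \Omega\varphi_p .
\]
By the general discussion following \cref{eq:phip}, $\Omega\varphi_p$ is an equivalence. This holds because $\Omega^\infty\bbone_B$ is a grouplike $\bbE_\infty$ ring in the $\infty$-category of motivic spaces, which has finite limits. Hence $\Omega(\Omega^\infty\tr)$ admits the section $\Omega s\circ(\Omega\varphi_p)^{-1}$.

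Next I will identify homotopy sheaves. For a motivic spectrum $E$ and $k\ge 1$, the sheaf $\ul{\pi}_k E$ is the Nisnevich sheafification of $U\mapsto \pi_k\map(\Sigma^\infty_+U,E)$. For pointed maps this equals $\pi_{k-1}$ of $\Omega\Omega^\infty E$ evaluated on $U$, with basepoint the zero section, and then sheafified. Applying this sheafified $\pi_{k-1}$, for $k-1\ge 0$, to $\Omega\Omega^\infty$ of $\tr$ recovers the map $\ul{\pi}_k\tr$. The section constructed above is a map of loop objects, hence of group objects up to homotopy. It therefore induces a homomorphism of sheaves of groups that is a right inverse to $\ul{\pi}_k\tr$. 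For $k=1$ this is $\pi_0$ of the loop object, which is still a group. So $\ul{\pi}_k\tr$ is a split surjection for all $k\ge1$.

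The main point needing care is that $\Omega\varphi_p$ is an equivalence, uniformly for $k\ge1$; the paper asserts this in the introduction. To make it explicit, I would check it objectwise on sections. For each $U$, $\map(U_+,\Omega^\infty\bbone_B)$ is a grouplike $\bbE_\infty$ ring space, and on it $\Omega\varphi_p$ acts on $\pi_j$, $j\ge1$, at the basepoint $0$ as $x\mapsto px^{p-1}\cdot x - x$. The first term vanishes because it factors through the multiplication with $x$ evaluated at the basepoint $0$, so the map is $-\id$. That is an equivalence, and equivalences of motivic spaces can be detected sectionwise before sheafification. The remaining verification is the routine compatibility of the loop structure with the additive structure, which makes the splitting additive.
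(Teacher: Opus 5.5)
Your proposal is correct and follows the paper's own route: the paper's one-line proof is exactly that $\varphi_p\colon\Omega^\infty\bbone_B\to\Omega^\infty\bbone_B$ becomes an equivalence after looping once, so the lift $s$ of \cref{introthm:motivickp} splits $\ul{\pi}_k\tr$ via $s_\ast\circ(\varphi_p)_\ast^{-1}$ for $k\geq 1$. Your extra justification of the equivalence is garbled as written, since the expression $px^{p-1}\cdot x$ has no meaning here; the clean reason is that $x\mapsto x^p$ factors through the $p$-fold multiplication, which is null on each coordinate axis through the basepoint $0$, so it induces zero on $\pi_j$ for $j\geq 1$ and $\varphi_p$ induces $-\id$ there.
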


\begin{rmk}
We are not the first to consider the Kahn--Priddy theorem in motivic homotopy theory: in \cite{hornbostel2018some}, Hornbostel proves that an unstable motivic analogue of the Kahn--Priddy theorem for $p=2$ \emph{fails} if one uses the simplicial classifying space $BC_2$ instead of the geometric classifying space $B_{\et}C_2$.
\tqed
\end{rmk}

\subsection{A synthetic Kahn--Priddy theorem}

Our final examples are in the setting of synthetic homotopy theory. Given an Adams-type ring spectrum $E$, write $\Syn_E$ for the category of $E$-synthetic spectra \cite{pstragowski2023synthetic} and
\[
\nu_E\colon \Sp \to \Syn_E
\]
for the synthetic analogue functor. Set
\[
\Sigma^{s,w}\bbone_E =  \Sigma^{s-w}\nu_E(\Sigma^w \bbS),
\]
and write
\[
\tau \in \pi_{0,-1}\bbone_E,\qquad \tau\colon \Sigma \nu_E(\bbS) \to \nu_E(\Sigma\bbS)
\]
for the deformation parameter. The grading is such that, for example,
\[
\pi_{s,w}C(\tau)\cong \Ext_{E_\ast E}^{w-s}(\Sigma^w E_\ast,E_\ast).
\]
As the theory of unstable synthetic homotopy theory has not yet been fully developed, for $X \in \Syn_E$ we will consider the collection of mapping spaces
\[
\Omega^\infty_{w,m} X  = \begin{cases}\map_{\Syn_E}(\Sigma^{0,w}\bbone_E,X \otimes C(\tau^m))&1 \leq m < \infty,\\
\map_{\Syn_E}(\Sigma^{0,w}\bbone_E,X)&m = \infty
\end{cases}
\]
for $1\leq m \leq\infty$ and $0 \leq w < \infty$ as a stand-in for an underlying unstable synthetic object of $X$. In particular $\pi_s \Omega^\infty_{w,m} X = \pi_{s,w}(X\otimes C(\tau^m))$ and $\Omega^\infty_{0,\infty}\Sigma^{0,f}\nu_E(Y) = \Omega^\infty Y$ for $f\geq 0$, and \cref{eq:phip} extends to certain natural maps
\[
\varphi_p\colon \Omega^\infty_{\ast,\ast}\bbone_E \to \Omega^{\infty}_{\ast,\ast}\bbone_E
\]
of pointed spaces which are again equivalences after looping once.

We first specialize to $E = \bbF_2$. As the transfer $\tr\colon \Sigma^\infty_+ BC_2 \to \bbS$ vanishes in mod $2$ homology, its $\bbF_2$-synthetic analogue canonically factors through $\tau$ to provide a synthetic transfer
\[
\tr_{\bbF_2}\colon \Sigma^{0,1}\nu_{\bbF_2}(\Sigma^\infty_+BC_2) \to \bbone_{\bbF_2}.
\]

\begin{samepage}
\begin{theorem}[\ref{thm:newskp}]\label{thm:f2synth}
There is a preferred lift in the diagram
\begin{center}\begin{tikzcd}
&\Omega^\infty_{\ast,\ast} \Sigma^{0,1}\nu_{\bbF_2}(\Sigma^\infty_+ BC_2)\ar[d,"\Omega^\infty_{\ast,\ast}\tr"]\\
\Omega^\infty_{\ast,\ast}\bbone_{\bbF_2}\ar[r,"\varphi_2"]\ar[ur,dashed]&\Omega^\infty_{\ast,\ast} \bbone_{\bbF_2}
\end{tikzcd},\end{center}
which for $(\ast,\ast) = (0,\infty)$ agrees with the lift in \cref{thm:classicalkahnpriddy}.
\tqed
\end{theorem}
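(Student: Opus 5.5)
We would prove \cref{thm:f2synth} by transporting the equivariant argument sketched for \cref{thm:classicalkahnpriddy} into $\bbF_2$-synthetic spectra, using a synthetic version of $C_2$-equivariant homotopy theory. Concretely, we would work with $C_2$-objects in $\Syn_{\bbF_2}$, that is, $\Syn_{\bbF_2}$-valued genuine $C_2$-spectra, equivalently modules over $\nu$ of the $C_2$-equivariant sphere. These receive a norm $N_e^{C_2}\colon \Syn_{\bbF_2}\to\Syn_{\bbF_2}^{C_2}$, built as usual from the symmetric monoidal structure via the Tate or Hill--Hopkins--Ravenel construction. They also carry inflation, restriction, and geometric fixed points $\Phi^{C_2}$, with $\Phi^{C_2}N_e^{C_2}\simeq\id$ and $\Phi^{C_2}\infl_e^{C_2}\simeq\id$.

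The first step is a synthetic Adams isomorphism, which identifies the fiber of $\bbone^{C_2}\to\Phi^{C_2}\bbone$ with $(\bbone\otimes E{C_2}_+)_{\h C_2}$. The point to check is that this is $\Sigma^{0,1}\nu(\Sigma^\infty_+BC_2)$, and not $\nu(\Sigma^\infty_+BC_2)$. The twist should come from the fact that $\nu$ does not preserve the relevant colimit: $\nu(\Sigma^\infty_+BC_2)$ differs from the synthetic homotopy orbits precisely because the transfer vanishes in $\bbF_2$-homology. Equivalently, one defines the synthetic isotropy separation so that the composite to $\bbone$ is the factored transfer $\tr_{\bbF_2}$.

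Granting this, we apply the functors $\Omega^\infty_{w,m}$ of the statement, which are corepresented mapping spaces tensored with $C(\tau^m)$ and therefore preserve fiber sequences. The maps $N_e^{C_2}$ and $\infl_e^{C_2}$ induce pointed maps $\Omega^\infty_{w,m}\bbone\to\Omega^\infty_{w,m}(\bbone^{C_2})$. For $N$ this uses functoriality on mapping spaces from $\Sigma^{0,w}\bbone$: the norm of $\Sigma^{0,w}\bbone$ is a representation-graded sphere, and we must restrict along a unit map $\Sigma^{0,w}\bbone\to N(\Sigma^{0,w}\bbone)$, or a $\rho$-type map, in order to land back in degree $(0,w)$. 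This is also where the definition of $\varphi_2$ on $\Omega^\infty_{\ast,\ast}$ from the earlier discussion comes in, and we match it with $\res\circ(N-\infl)$.

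The difference $N-\infl$ is nullified by $\Phi^{C_2}$, so it lifts canonically to the fiber. Composing with restriction gives $x\mapsto x^2-x=\varphi_2$, which yields the lift. For $(w,m)=(0,\infty)$, we note that $\Omega^\infty_{0,\infty}\Sigma^{0,f}\nu(Y)=\Omega^\infty Y$ and that every construction is compatible with $\tau$-inversion. The lift therefore agrees with that of \cref{thm:classicalkahnpriddy}.

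We expect the main obstacle to be the synthetic Adams isomorphism with the correct weight shift $\Sigma^{0,1}$. To approach it, we would compute in $\bbF_2$-homology. The $C_2$-equivariant sphere has an isotropy cofiber sequence whose $\bbF_2$-homology is short exact only after a shift. One then checks that $\nu$ applied to $\Sigma^\infty_+BC_2\to\bbS\to\bbS^{C_2}$-type sequences gives a cofiber sequence in $\Syn_{\bbF_2}$ exactly when the boundary is regraded by one filtration. This is the condition for $\nu$ to preserve cofiber sequences: they must be short exact on $\bbF_2$-homology.

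A secondary difficulty is making the norm interact with $C(\tau^m)$ and the weight $w$ coherently. For this we would use that $C(\tau^m)$ is an $\bbE_\infty$-algebra, or at least admits a compatible norm map, for the structure needed.
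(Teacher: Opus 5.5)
\textbf{There is a genuine gap at the step you defer as ``the main obstacle''.} Your skeleton (norm minus inflation is killed by $\Phi^{C_2}$, lift to the fiber, restrict to get $\varphi_2$) is the paper's, and you locate the difficulty correctly. But the synthetic Adams isomorphism is essentially the whole content of the theorem, and the framework you propose cannot supply it.

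In $\Syn_{\bbF_2}$-valued genuine $C_2$-spectra, i.e.\ $\Sp^{C_2}\otimes\Syn_{\bbF_2}$, fixed points and orbits are the $\Syn_{\bbF_2}$-linear extensions of the classical ones. So the fiber of $\bbone^{C_2}\to\Phi^{C_2}\bbone$ is $\colim_{BC_2}\bbone_{\bbF_2}=\bbone_{\bbF_2}\otimes\Sigma^\infty_+BC_2$, and its composite to $\bbone_{\bbF_2}$ is $\bbone_{\bbF_2}\otimes\tr$. This object is not $\Sigma^{0,1}\nu(\Sigma^\infty_+BC_2)$, and no lift through it can exist:
\begin{itemize}
\item The unit of $\mathrm{Mod}_{C(\tau)}(\Syn_{\bbF_2})$ has endomorphism ring $\bbF_2$, since $\pi_{s,0}C(\tau)$ is $\bbF_2$ for $s=0$ and zero otherwise. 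Hence $C(\tau)\otimes\tr$ factors through $\bbF_2\otimes\tr\simeq 0$.
\item On the other hand, $\varphi_2$ is an isomorphism on $\pi_1\Omega^\infty_{2,1}\bbone_{\bbF_2}\cong\Ext^{1}_{\calA^\vee}(\Sigma^2\bbF_2,\bbF_2)$, which contains $h_1$.
\end{itemize}
So the properties you list cannot all hold in that setting. ``Defining'' isotropy separation so that it ends in $\tr_{\bbF_2}$ assumes the conclusion. Your homology heuristic fails for the same reason. The sequence $\Sigma^\infty_+BC_2\to\bbS^{C_2}\to\bbS$ is split by tom Dieck, so it is short exact in $\bbF_2$-homology with no shift. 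Applying $\nu$ gives fiber $\nu(\Sigma^\infty_+BC_2)$ with transfer $\nu(\tr)=\tr_{\bbF_2}\circ\tau$, which is again null after tensoring with $C(\tau)$.

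\textbf{The missing idea is that the weight shift appears only when the equivariant term is resolved by a genuinely equivariant lift of $\bbF_2$.} The paper builds no equivariant synthetic category. It uses Tot towers of cobar complexes for the cofree $\bbF_2^h=F(EC_{2+},\infl_e^{C_2}\bbF_2)$, which has three useful properties:
\begin{itemize}
\item it is normed, so $N_e^{C_2}$ is defined levelwise on cobar complexes;
\item it is global, so inflation exists;
\item it has an invertible $u_\sigma$.
\end{itemize}
Consequently $a_\sigma$ is injective on $\pi^{C_2}_\star$ of every $(\bbF_2^h)^{\otimes k}$: inflation makes restriction surjective, $u_\sigma$ extends this to all $RO(C_2)$-degrees, and so the transfer in the Euler class sequence vanishes (\cref{lem:regulareuler}). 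Since $\Phi^{C_2}$ inverts $a_\sigma$, the isotropy sequence
\[
((\bbF_2^h)^{\otimes\bullet+1})^{C_2}\to(\bbF_2^t)^{\otimes\bullet+1}\to\bbF_2^{\otimes\bullet+1}\otimes\Sigma\Sigma^\infty_+BC_2
\]
is levelwise short exact on homotopy. It therefore survives $\Tot\tau_{<\star}\tau_{\geq0}$. The fiber of $\Phi^{C_2}$ is then the desuspension of the last term, $\Bock^{\bbF_2}_\star(\Sigma^{0,1}\nu(\Sigma^\infty_+BC_2))$, and it maps to the restriction by $\tr_{\bbF_2}$ (\cref{lem:fibf2}). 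This is exactly where $\Sigma^{0,1}$ comes from.

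You would also need two further facts, both hidden in your ``$\Phi^{C_2}N\simeq\id\simeq\Phi^{C_2}\infl$'':
\begin{itemize}
\item $\bbF_2\to\bbF_2^t$ induces an equivalence of Adams towers.
\item The Tate Frobenius (what $\Phi^{C_2}\circ N_e^{C_2}$ produces) and the unit (what $\Phi^{C_2}\circ\infl$ produces) induce the same map on these towers, because the ring map $\bbF_2\to\bbF_2^t$ is unique.
\end{itemize}

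Finally, you do not need to norm $\Sigma^{0,w}\bbone$ and $C(\tau^m)$. The paper reduces to $(w,m)=(0,m)$ via the fiber sequences $\Omega^\infty_{m,r}\to\Omega^\infty_{0,m+r}\to\Omega^\infty_{0,m}$ and applies $\Tot\tau_{<m}$ to explicit cobar complexes. Compatibility with the classical lift at $m=\infty$ then holds by construction.
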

\end{samepage}

\begin{cor}[\ref{cor:f2kphtpy}]
The transfer induces compatibly split surjections
\begin{align*}
\bbZ_{(2)}\otimes \pi_{s,w}\nu_{\bbF_2}(\Sigma^\infty BC_2) &\to \bbZ_{(2)}\otimes \pi_{s,w-1}\bbone_{\bbF_2},\\
\pi_{s,w}\nu_{\bbF_2}(\Sigma^\infty BC_2) \otimes C(\tau^m) &\to \pi_{s,w-1}C(\tau^m)
\end{align*}
for all $m,s,w\geq 1$.
\tqed
\end{cor}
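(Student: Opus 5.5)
The corollary should follow from \cref{thm:f2synth} by applying $\pi_s$ to the lifted diagram and using that $\varphi_2$ is an equivalence after looping once.

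\textbf{Step 1: read off homotopy groups.} Fix $w\geq 1$ and $1\leq m\leq\infty$. For $s\geq 1$ we have
\[
\pi_s\Omega^\infty_{w,m}X = \pi_s\Omega^\infty_{w,m}(\Omega X)[\text{shifted}],
\]
and looping commutes with the lift. So applying $\pi_s$ to the diagram of \cref{thm:f2synth} yields a factorization of $\pi_s(\varphi_2)$ through the map induced by the synthetic transfer. By the standing assertion that $\Omega\varphi_2$ is an equivalence, $\pi_s(\varphi_2)$ is an isomorphism for $s\geq 1$. Hence the map
\[
\pi_{s,w}(\Sigma^{0,1}\nu_{\bbF_2}(\Sigma^\infty_+BC_2)\otimes C(\tau^m)) \to \pi_{s,w}(C(\tau^m))
\]
is split surjective, with splitting $H_\ast\circ\pi_s(\varphi_2)^{-1}$. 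Naturality of the lift in $(w,m)$ makes these splittings compatible as $m$ and $w$ vary.

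\textbf{Step 2: reindex.} Rewrite $\pi_{s,w}\Sigma^{0,1}(-) = \pi_{s,w-1}(-)$; I need to be careful with the paper's sign convention here. This should match the indexing in the corollary, giving $\pi_{s,w}\nu(\ldots)\to\pi_{s,w-1}$ as stated.

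\textbf{Step 3: replace $\Sigma^\infty_+$ by $\Sigma^\infty$.} This uses $\Sigma^\infty_+BC_2\simeq \bbS\oplus\Sigma^\infty BC_2$, and $\nu$ preserves this split sum. On the $\bbone$ summand the transfer restricts to multiplication by $2$, or its synthetic analogue. So I must show that the composite
\[
\Sigma^{0,1}\nu(\Sigma^\infty BC_2)\to \Sigma^{0,1}\nu(\Sigma^\infty_+BC_2)\to\bbone
\]
is still split surjective on the relevant groups.

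\textbf{Step 4 (main obstacle): dispose of the $\bbone$ summand.} I would show that on the $\bbone$ summand the factored transfer is essentially ``$2/\tau$''. Indeed $\nu(2)=\tau h_0$, with $h_0\in\pi_{0,1}$, so the $\bbone$-component of $\tr_{\bbF_2}$ is $h_0$. Its image in $\pi_{s,w-1}$ is divisible by $h_0$, which is topologically nilpotent $2$-adically. After $\bbZ_{(2)}$-localization, composing the splitting with the projection to the $\Sigma^\infty BC_2$ summand gives a map $\sigma'$ with $\tr\circ\sigma' = 1 - h_0 c$ for an endomorphism $c$.

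I would then argue that $1-h_0c$ is invertible: either via nilpotence of $h_0$ on $\pi_{\ast,\ast}C(\tau^m)$ (a vanishing line in the Adams $E_2$-page), or by $2$-completeness/Nakayama in the $m=\infty$ $2$-local case. This nilpotence/invertibility is where I expect the difficulty. I would alternatively try the classical trick of replacing $\varphi_2$ by a variant built from $N-\infl$ restricted to the reduced part, so that the lift lands directly in the $\Sigma^\infty BC_2$ summand.

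The restriction $m,w\geq 1$ should be exactly what ensures the needed nilpotence. For finite $m$ the groups are finite-length $\Ext$-type objects, so no localization is needed there.
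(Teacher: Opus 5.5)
Your Steps 1--3 match the paper's reduction. The $\Sigma^\infty_+BC_2$ case comes from the $n=0$ case of \cref{thm:newskp}, and the $\bbone$-component of the synthetic transfer is $\widetilde{2}$ (your $h_0$). The paper then finishes in one line: $\bbZ_{(2)}\otimes\pi_{*,*}\bbone_{\bbF_2}$ is $\widetilde{2}$-torsion in positive stems.

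The gap is in how you use this in Step~4. Inverting $1-\widetilde{2}c$ by nilpotence of $\widetilde{2}$ does not work as stated. The map $c\colon\pi_{s,w}\to\pi_{s,w-1}$ is a component of a section obtained as $\pi_s$ of a map of spaces built from the multiplicative norm. There is no reason for it to be $\widetilde{2}$-linear, so $(\widetilde{2}c)^k\neq\widetilde{2}^k c^k$. Moreover, $\widetilde{2}c$ never changes weight, so no vanishing line can force its powers to vanish. This is exactly where \cref{prop:sigmap} used that $p$ is an integer and so commutes with every homomorphism. Your Nakayama variant has the same defect, since it also needs the inverted map to be $\widetilde{2}$-linear. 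Relatedly, the hypothesis doing the work is $s\geq 1$ (stem $0$ carries an $h_0$-tower), not $m,w\geq 1$.

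The missing idea is to put the linearity on the transfer instead. Since $\widetilde{\tr}$ is a map of synthetic spectra, it is $\pi_{*,*}\bbone$-linear. Write the splitting of $\tr=(\widetilde{2},\widetilde{\tr})$ in weight $w$ as $x=\widetilde{2}\,a(x)+\widetilde{\tr}\,b(x)$, with $a(x)\in\pi_{s,w-1}$. Substitute the same identity, one weight lower, for $a(x)$, and repeat. Using $\widetilde{2}\,\widetilde{\tr}(y)=\widetilde{\tr}(\widetilde{2}y)$, this gives
\[
x=\widetilde{2}^{N}a^{N}(x)+\widetilde{\tr}\Big(\sum_{k=0}^{N-1}\widetilde{2}^{k}\,b\,a^{k}(x)\Big),
\]
where $a^k$ denotes the composite of the maps $a$ through weights $w,\dots,w-k+1$.

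For $C(\tau^m)$, the first term vanishes once $w-N<s$, because $\pi_{s,v}C(\tau^m)=0$ for $v<s$.

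For $\bbZ_{(2)}\otimes\pi_{*,*}\bbone$, stop at $N=w$. In weight $0$ you are in the classical situation. Under the isomorphism $\tau\colon\pi_{s,0}\cong\pi_{s,-1}$, both groups are $\pi_s\bbS$, and $\widetilde{2}\tau=2$, so the $\bbone$-component of the transfer is the integer $2$. The classical geometric-series argument on the finite $2$-group $\bbZ_{(2)}\otimes\pi_s\bbS$ then gives an additive section $t_0$ of $\widetilde{\tr}$. Hence $\widetilde{2}^{w}a^{w}(x)=\widetilde{\tr}\big(\widetilde{2}^{w}t_0a^{w}(x)\big)$; this is where torsion actually enters.

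In both cases you obtain an explicit additive section of $\widetilde{\tr}$, built from the compatible maps $a$, $b$ and multiplication by $\widetilde{2}$. That construction is what delivers the compatibility of the splittings in $m$ and $w$.
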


\begin{ex}
Specializing to $m=1$, this recovers Lin's \emph{Algebraic Kahn--Priddy theorem} \cite{lin1981algebraic}: the algebraic transfer
\[
\Ext^{f}_{\calA^\vee}(\Sigma^\ast\bbF_2,H_\ast(BC_2,\bbF_2)) \to \Ext_{\calA^\vee}^{f+1}(\Sigma^{\ast+1}\bbF_2,\bbF_2)
\]
is a surjection for $f \geq 0$, where $\calA^\vee$ is the dual Steenrod algebra. Moreover, our work improves on this by constructing a particular section which is compatible with Adams differentials. We give an algebraic description of this section in \cref{prop:sec}; we do not know whether it agrees with the one constructed by Lin. Our proof is completely non-computational to the point where, unlike Singer's conceptual proof of the algebraic Kahn--Priddy theorem \cite{singer1981new}, it does not even require one to have any knowledge of the Steenrod algebra.
\tqed
\end{ex}

\begin{ex}
Write $P^\infty_n$ for the Thom spectrum of $n\sigma$, where $\sigma$ is the tautological line bundle over $BC_2\simeq \mathbb{R}P^\infty$. Following \cref{rmk:klein}, the composite
\[
\pi_{s,w}\bbone_{\bbF_2} \to \pi_{s,w+1}\nu_{\bbF_2}(\Sigma^\infty_+ BC_2) \simeq \pi_{s,w+1}\nu_{\bbF_2}(P^\infty_0) \to \pi_{s,w+1}\nu_{\bbF_2}(P^\infty_n)
\]
of the section to the transfer with the collapse map $P^\infty_0 \to P^\infty_n$ provides an $\bbF_2$-synthetic lift of the stable Hopf invariant
\[
H\colon \Omega^\infty\Sigma^\infty S^n \to \Omega^\infty \Sigma^\infty \Sigma^n P^\infty_n,
\]
an unstable map which a priori need not be compatible with stable Adams filtration at all. Here we mention that, prior to our work, Nikolay Konovalov has shared with us that he has a completely different approach to compatibility of the stable Hopf invariant with stable Adams filtration based on his analysis of the algebraic Goodwillie tower \cite{konovalov2023algebraic} and forthcoming work of Itamar Mor, which moreover applies to higher stages of the Goodwillie tower.
\tqed
\end{ex}

We next specialize to $E = MU$. Under the splitting $\Sigma^\infty_+ BC_p\simeq \bbS \oplus \Sigma^\infty BC_p$, the transfer $\tr\colon \Sigma^\infty_+ BC_p\to \bbS$ may be identified as the degree $p$ map $\bbS \to \bbS$ and a map $\widetilde{\tr}\colon \Sigma^\infty BC_p \to \bbS$ which vanishes in any complex orientable homology theory. In particular, its $MU$-synthetic analogue extends to a synthetic transfer
\[
\tr_{MU}\colon \bbone_{MU} \oplus \Sigma^{0,1}\nu_{MU}(\Sigma^\infty BC_p) \to \bbone_{MU}.
\]

\begin{samepage}
\begin{theorem}[\ref{thm:musynthkp}]\label{thm:musynth}
There is a preferred lift in the diagram
\begin{center}\begin{tikzcd}
&\Omega^\infty_{\ast,\ast}\left( \bbone_{MU}\oplus \Sigma^{0,1}\nu_{MU}(\Sigma^\infty BC_p)\right)\ar[d,"\Omega^\infty_{\ast,\ast}\tr_{MU}"]\\
\Omega^\infty_{\ast,\ast}\bbone_{MU}\ar[r,"\varphi_p"]\ar[ur,dashed]&\Omega^\infty_{\ast,\ast} \bbone_{MU}
\end{tikzcd},\end{center}
which for $(\ast,\ast) = (0,\infty)$ agrees with the lift in \cref{thm:classicalkahnpriddy}.
\tqed
\end{theorem}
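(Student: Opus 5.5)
We plan to run the argument behind \cref{thm:classicalkahnpriddy} inside a synthetic version of $C_p$-equivariant homotopy theory, and then read off the $MU$-synthetic statement.

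\textbf{Step 1: equivariant synthetic spectra.} Work with $\Syn_{MU}^{C_p}$, for instance as Mackey functors in $\Syn_{MU}$, or as modules over $\nu_{MU}$ of the equivariant sphere. We need three pieces of structure.
\begin{enumerate}[label=(\roman*)]
\item A norm $N_e^{C_p}\colon \Syn_{MU}\to\Syn_{MU}^{C_p}$. This should be given by the symmetric monoidal $p$-fold tensor power with its $C_p$-action, together with the Tate diagonal.
\item An inflation functor.
\item An Adams isomorphism identifying the fiber of geometric fixed points on $\bbone^{C_p}$ with the homotopy orbits $(\bbone\otimes EC_{p+})_{\h C_p}$.
\end{enumerate}
Applying $\nu_{MU}$ to the classical Adams isomorphism for $\bbS$ is delicate, because $\nu$ is only lax monoidal and does not preserve cofibers. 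Instead, we use that $\nu_{MU}$ does preserve the relevant cofiber sequences when $MU_\ast$ of the sequence is short exact.

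This is where complex orientability enters. The cofiber sequence
\[
\Sigma^\infty_+BC_p\to \bbS^{C_p}\to \Phi^{C_p}\bbS\simeq\bbS
\]
splits as $\bbS\oplus\Sigma^\infty BC_p \to \bbS\oplus\bbS\to\bbS$. The map $\widetilde{\tr}$ is zero in $MU$-homology, so on $MU_\ast$ the sequence becomes short exact only after the shift by $\Sigma^{0,1}$ on the $BC_p$ summand. This produces the fiber term $\bbone_{MU}\oplus\Sigma^{0,1}\nu_{MU}(\Sigma^\infty BC_p)$ appearing in the statement.

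\textbf{Step 2: the map $\varphi_p$ synthetically.} Define $P\colon\Omega^\infty_{w,m}\bbone_{MU}\to \Omega^\infty_{pw,?}$ by taking $p$-th tensor powers of maps. To stay in weight $w$, we compose with a genuine-fixed-point refinement landing in $\nu(\bbS^{C_p})$-type objects.

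We then assemble two maps into the synthetic fixed points:
\begin{enumerate}[label=(\alph*)]
\item the norm $x\mapsto N(x)$;
\item inflation $x\mapsto\infl(x)$.
\end{enumerate}
Both agree after composing to the synthetic geometric fixed points $\bbone_{MU}$. For the norm this is the diagonal $\Phi^{C_p}N x\simeq x$, which holds for any symmetric monoidal norm with geometric fixed points. Hence their difference lifts to the fiber, which Step 1 identifies with $\bbone_{MU}\oplus\Sigma^{0,1}\nu_{MU}(\Sigma^\infty BC_p)$.

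Composing with restriction, which realizes $\tr_{MU}$, gives
\[
\res\circ N-\res\circ\infl = x^p - x = \varphi_p.
\]
For the $C(\tau^m)$ variants, we tensor everything with $C(\tau^m)$; the constructions are compatible because they are natural in the synthetic ring. For $(\ast,\ast)=(0,\infty)$, the functor $\tau^{-1}$ recovers the classical diagram, which gives agreement with \cref{thm:classicalkahnpriddy}.

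\textbf{Main obstacle.} The main obstacle is Step 1: constructing the synthetic norm compatibly with geometric fixed points, and verifying the synthetic Adams isomorphism with the correct weight shift. In particular, one must show that the cofiber of $\nu(\Sigma^\infty_+BC_p)\to\nu(\bbS^{C_p})$ is the expected object.

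The first approach we would try is to define everything via $\nu$ applied to the classical $C_p$-diagram. We would then check the relevant exactness on $MU_\ast$ using the vanishing of $\widetilde{\tr}$ in complex oriented theories. Finally, we would build the norm on $\nu(\bbS)$-points using that $\nu$ is lax symmetric monoidal and preserves the Tate-diagonal structure on $\Omega^\infty$.
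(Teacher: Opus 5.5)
There is a genuine gap. It sits exactly at the step you flag as the main obstacle, and the route you propose there cannot work.

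\textbf{Why applying $\nu_{MU}$ to the classical diagram fails.}
\begin{enumerate}[label=(\roman*)]
\item The classical isotropy separation sequence $\Sigma^\infty_+BC_p\to\bbS^{C_p}\to\bbS$ is \emph{split} by inflation. This is the tom Dieck splitting $\bbS^{C_p}\simeq\bbS\oplus\Sigma^\infty_+BC_p$; the object $\bbS^{C_p}$ is not $\bbS\oplus\bbS$, which describes only its $\pi_0$, the Burnside ring. So the sequence is already short exact on $MU_\ast$ and $\nu_{MU}$ preserves it. The fiber of geometric fixed points is therefore $\nu_{MU}(\Sigma^\infty_+BC_p)$, with no weight shift, and restriction realizes $\nu_{MU}(\tr)$ rather than $\tr_{MU}$.
\item The vanishing of $\widetilde{\tr}$ in $MU$-homology is what lets one \emph{define} $\tr_{MU}$, via the fiber of $\widetilde{\tr}$. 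It says nothing about the fiber of geometric fixed points; you are conflating the two.
\item This is not mere bookkeeping: no lift of $\varphi_p$ through $\nu_{MU}(\tr)$ exists for finite $m$. After tensoring with $C(\tau)$, $\nu_{MU}(\tr)$ induces on $\Ext_{MU_\ast MU}(MU_\ast,MU_\ast(-))$ the map given by $MU_\ast\tr$. This is $p$ on the $\bbone_{MU}$ summand and $0$ on the other. Since $\varphi_p$ is $-1$ in positive stems, a lift would force, e.g., $\alpha_1\in\Ext^1_{MU_\ast MU}$ to be divisible by $p$, which it is not.
\item Your norm also has nowhere to land. To map $MU$-resolutions levelwise into $MU^{\otimes k}\otimes\bbS^{C_p}\simeq(\infl_e^{C_p}MU^{\otimes k})^{C_p}$, you would need a map $N_e^{C_p}MU\to\infl_e^{C_p}MU$ inducing the identity on $\Phi^{C_p}$, and you do not have one.
\item ``Tensoring with $C(\tau^m)$'' does not make sense for the nonlinear maps $P$ and $N_e^{C_p}$.
\end{enumerate}

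\textbf{What is missing.} The key idea is to use an equivariant lift of the \emph{ring} $MU$ rather than of the sphere: tom Dieck's $MU_{C_p}$. Being globally ultracommutative, it carries both a norm $N_e^{C_p}MU\to MU_{C_p}$ and an inflation $\infl_e^{C_p}MU\to MU_{C_p}$.
\begin{enumerate}[label=(\roman*)]
\item These give two maps of cosimplicial spaces $\Omega^\infty MU^{\otimes\bullet+1}\to\Omega^\infty(MU_{C_p}^{\otimes\bullet+1})^{C_p}$. Applying $\Tot\tau_{<m}\tau_{\geq0}$ handles all finite $m$ at once and is compatible with $m=\infty$ by construction. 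The reduction to $w=0$ uses the fiber sequences of \cref{rmk:fibers}.
\item The weight shift comes from the isotropy separation of $MU_{C_p}^{\otimes\bullet+1}$, which is \emph{not} split on homotopy. Here $\pi_\ast(MU_{C_p}^{\otimes\bullet+1})^{C_p}$ is concentrated in even degrees and $MU_\ast\Sigma^\infty BC_p$ in odd degrees. Hence the reduced transfer vanishes on homotopy, while the transfer on the $\bbS$ summand is injective. This is what makes $\Tot\tau_{<\star}\tau_{\geq0}$ produce a fiber sequence with fiber $\bbone_{MU}\oplus\Sigma^{0,1}\nu_{MU}(\Sigma^\infty BC_p)$ (\cref{lem:fibmu}).
\item Equalization on geometric fixed points is not the sphere-level fact $\Phi^{C_p}Nx\simeq x$. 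With $MU$ coefficients, norm and inflation induce two possibly different ring maps $\psi,\eta\colon MU\to\Phi^{C_p}MU$. One needs the separate input that any two ring maps induce the same map of Adams towers $\Bock_\star^{MU}\to\Bock_\star^{\Phi^{C_p}MU}$.
\end{enumerate}
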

\end{samepage}

\begin{cor}[\ref{cor:musynthkp}]\label{cor:bpkp}
The transfer induces compatibly split surjections
\begin{align*}
\bbZ_{(p)}\otimes \pi_{s,w}\nu_{MU}(\Sigma^\infty B\Sigma_p) &\to \bbZ_{(p)}\otimes \pi_{s,w-1}\bbone_{MU},\\
\pi_{s,w}\nu_{MU}(\Sigma^\infty B\Sigma_p) \otimes C(\tau^m) &\to \pi_{s,w-1}C(\tau^m)
\end{align*}
for all $m,s,w\geq 1$.
\tqed
\end{cor}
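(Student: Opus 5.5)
We derive \cref{cor:bpkp} from \cref{thm:musynth} by applying $\pi_s$ to the lift for each pair $(w,m)$. Write $\sigma\colon \Omega^\infty_{w,m}\bbone_{MU}\to \Omega^\infty_{w,m}(\bbone_{MU}\oplus\Sigma^{0,1}\nu_{MU}(\Sigma^\infty BC_p))$ for the preferred lift, so that $\Omega^\infty_{w,m}\tr_{MU}\circ\sigma\simeq\varphi_p$ as pointed maps.

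\emph{Step 1.} Take homotopy groups in positive degree $s\geq 1$. Since $\Omega\varphi_p$ is an equivalence, $\pi_s\varphi_p$ is an automorphism of $\pi_{s,w}(\bbone_{MU}\otimes C(\tau^m))$ (resp.\ $\pi_{s,w}\bbone_{MU}$ for $m=\infty$). Hence $\sigma_\ast\circ(\pi_s\varphi_p)^{-1}$ is a section of $\pi_s\tr_{MU}$ on
\[
\pi_{s,w}(\bbone_{MU}\otimes C(\tau^m))\oplus\pi_{s,w-1}\nu_{MU}(\Sigma^\infty BC_p)\otimes C(\tau^m)\to\pi_{s,w}C(\tau^m),
\]
using $\pi_{s,w}\Sigma^{0,1}X=\pi_{s,w-1}X$. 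Compatibility in $(w,m)$ follows because the lifts are natural in $(w,m)$, i.e.\ along the maps $C(\tau^{m+1})\to C(\tau^m)$, multiplication by $\tau$, and $m=\infty$. I expect these facts to be recorded as part of \cref{thm:musynthkp}, since the $\varphi_p$ are natural maps on $\Omega^\infty_{\ast,\ast}$.

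\emph{Step 2.} Remove the $\bbone_{MU}$ summand and pass to $\Sigma_p$. On the summand $\bbone_{MU}$ the map $\tr_{MU}$ is the degree $p$ map, a unit after inverting the prime-to-$p$ integers only if it were invertible. It is not, so we argue instead as follows. The target $\pi_{s,w}\bbone_{MU}$ for $s\geq1$ is torsion in the relevant range, and $p$-locally the Kahn--Priddy content concerns the $p$-primary part. The cleaner route is to replace $BC_p$ by $B\Sigma_p$. Since $\Sigma^\infty B\Sigma_p\simeq(\Sigma^\infty BC_p)_{(p)}^{h\,\mathbb{F}_p^\times}$ is a $p$-local retract of $\Sigma^\infty BC_p$ via the transfer and restriction, and the averaging idempotent $e=\frac{1}{p-1}\sum_{g}g$ commutes with $\widetilde{\tr}$, we may compose the section with $e$. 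This produces a section landing in $\pi\nu_{MU}(\Sigma^\infty B\Sigma_p)$ after applying $\nu_{MU}$ and $\mathbb{Z}_{(p)}\otimes$. For $C(\tau^m)$, which is $p$-local by hypothesis in this statement (or after $p$-completion), the same applies.

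\emph{Main obstacle.} The hard part is the reduced-transfer bookkeeping. One must show that the $\bbone_{MU}$-component of the section, composed with multiplication by $p$, can be absorbed. Because the lift is natural in $A$ (as in \cref{eq:naturalkp}), I will use the version where $\varphi_p$ factors through $A^{\otimes p}$ with $A=S^0$. Here the $\bbone$-component of $H$ corresponds to the diagonal term $\infl$, which is killed by $\varphi_p=P-\widetilde\Delta$ up to the identifiable degree-$p$ term. So the $\bbone$-summand component of $\sigma$ has image divisible by, or annihilated by, the relevant torsion, and it can be subtracted off $p$-locally. If this fails, the fallback is to use the $\mathbb{F}_p^\times$-equivariance of the norm construction to build the lift directly into the $\Sigma_p$-orbits, using $N^{\Sigma_p}_e$ in place of $N^{C_p}_e$.
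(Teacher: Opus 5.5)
Your Step 1 is fine. Composing with the averaging idempotent $e$ is a legitimate substitute for the paper's passage through $\Hol(C_p)$: $\widetilde{\tr}$ is $\Aut(C_p)$-invariant, and $p$-locally $\Sigma^\infty B\Sigma_p\simeq\Sigma^\infty B\Hol(C_p)$ is the $e$-summand of $\Sigma^\infty BC_p$.

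The genuine gap is the step you yourself flag: getting rid of the $\bbone_{MU}$-summand, on which $\tr_{MU}$ is multiplication by $p$.
\begin{itemize}
\item Averaging does not touch this summand, since $\Aut(C_p)$ acts trivially on the basepoint summand. So $e\circ\sigma$ is still only a section of the unreduced transfer.
\item A lift built from $N_e^{\Sigma_p}$ would land in $\Sigma^\infty_+B\Sigma_p$, which has the same summand.
\item Your proposed mechanism is unsupported and conflates two different summands. You claim the $\bbone$-component of the lift ``corresponds to the diagonal term $\infl$'' and is therefore divisible by or annihilated by the relevant torsion. But the $\infl$ term is what gets subtracted so as to land in the fiber of $\Phi^{C_p}$; it accounts for the geometric-fixed-point part of the fixed points. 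The $\bbone$-component, by contrast, is the projection of the lift onto the basepoint summand of $\Sigma^\infty_+BC_p$, inside the homotopy orbits.
\end{itemize}
Nothing forces that component to vanish or be divisible, and in fact you do not need any such property.

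You observe that the target is torsion in positive stems but never use it. The missing idea is the elementary argument from the proof of \cref{prop:sigmap}, which is how the paper concludes.
\begin{enumerate}
\item For $s\ge 1$ write $\sigma_*=(a,b)$. Here $a$ is the endomorphism of $\pi_{s,w}(\bbone_{MU}\otimes C(\tau^m))$ given by the $\bbone_{MU}$-component, and $b$ is the component in $\pi_{s,w-1}\nu_{MU}(\Sigma^\infty BC_p)\otimes C(\tau^m)$.
\item Since $\pi_s\varphi_p=-1$, you get $p\,a+\widetilde{\tr}_*\,b=-1$, i.e.\ $\widetilde{\tr}_*\,b=-(1+pa)$.
\item After $p$-localization the target is $p$-power torsion in positive stems. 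Hence $pa$ is locally nilpotent: if $p^Nx=0$ then $(pa)^Nx=a^N(p^Nx)=0$.
\item So $1+pa$ is an automorphism with inverse $\sum_j(-pa)^j$, and $-b\circ(1+pa)^{-1}$ is a section of the reduced transfer. It is compatible in $(w,m)$ because $a$ and $b$ are.
\end{enumerate}
Only after this should you apply $e$ to pass to $\Sigma_p$.

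Finally, the second displayed map must also be read after $p$-localization, as in the paper's proof. Contrary to your remark, $C(\tau^m)$ is not $p$-local: its homotopy contains $\ell$-torsion for other primes $\ell$, which the $p$-local source cannot hit.
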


\begin{ex}
\cref{cor:bpkp} for $m=1$ specializes to an Adams--Novikov analogue of the algebraic Kahn--Priddy theorem. Let us expand this out explicitly. Write
\[
\widetilde{\tr}\colon \Sigma^\infty B\Sigma_p \to \bbS
\]
for the transfer restricted to the summand $\Sigma^\infty B\Sigma_p\subset\Sigma^\infty_+ B\Sigma_p$, and set
\[
\widetilde{B}_{-1} = \Fib\left(\widetilde{\tr}\colon \Sigma^\infty B\Sigma_p \to \bbS\right).
\]
The cofiber sequence
\[
\Sigma^{-1}\bbS \to \widetilde{B}_{-1} \to \Sigma^\infty B\Sigma_p
\]
induces a short exact sequence in $BP$-homology, and so determines an extension
\[
e \in \Ext_{BP_\ast BP}^1(BP_\ast \Sigma^\infty B\Sigma_p,BP_{\ast}\Sigma^{-1}\bbS)
\]
of $BP_\ast BP$-comodules. Yoneda composition with this class defines a map
\[
e\circ (\bs)\colon \Ext_{BP_\ast BP}^f(BP_\ast\Sigma^{w}\bbS,BP_\ast\Sigma^\infty B\Sigma_p)\to\Ext_{BP_\ast BP}^{f+1}(BP_\ast \Sigma^{w}\bbS,BP_\ast\Sigma^{-1} \bbS).
\]
\cref{cor:bpkp} for $m=1$ implies that this is a split surjection for all $f \geq 0$. The $BP_\ast BP$-comodule $BP_\ast B\Sigma_p$ is extremely complicated \cite{davis1978bp}: it is difficult to imagine a direct algebraic proof of this theorem. Moreover, the full strength of our $MU$-synthetic Kahn--Priddy theorem implies that there is even a particular splitting which is compatible with Adams--Novikov differentials, converging to a topological map $\Omega^\infty\bbS \to \Omega^\infty\Sigma^\infty B\Sigma_p$.
\tqed
\end{ex}

Our proof of the synthetic Kahn--Priddy theorem follows the same basic recipe as the others, although the details are somewhat more involved. A rudimentary theory of synthetic norms was implicitly studied in \cite{balderrama2024total}, and forthcoming work of Piessevaux establishes a synthetic Adams isomorphism in the setting of the synthetic equivariant spectra developed by Allen--Piessevaux \cite{allenpiessevaux2025synthetic}. Familiarity with these ingredients was critical to our discovery of the synthetic Kahn--Priddy theorem, although we shall give a direct treatment which is specialized to the particular facts we need.

\subsection*{Acknowledgements}

We thank Lucas Piessevaux for several helpful discussions, and especially for explaining his forthcoming work on synthetic isotropy separation. \cref{lem:fibf2} and \cref{lem:fibmu} in particular are based on these discussions, and we expect more refined statements to appear in his work.

\section{The smashing Kahn--Priddy theorem}\label{sec:localkahnpriddy}

Our goal in this section is to establish the Kahn--Priddy theorem for smashing localizations.

\begin{lemma}\label{lem:normsmash}
Let $L$ be a smashing localization of spectra. Then there is a unique equivalence
\[
N_e^{C_p}(L\bbS)\simeq \infl_e^{C_p}(L\bbS)
\]
in $\CAlg(\Sp^{C_p})$.
\end{lemma}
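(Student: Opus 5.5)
Since $L$ is smashing, $L\bbS$ is an idempotent commutative algebra: the multiplication $L\bbS\otimes L\bbS\to L\bbS$ is an equivalence. Idempotent algebras are exactly the $\bbE_\infty$-rings $R$ for which the space of $\bbE_\infty$-maps out of $R$ into any $\bbE_\infty$-ring is either empty or contractible. Equivalently, $\CAlg(\Sp)_{L\bbS/}\simeq \CAlg(L\Sp)$ is a full subcategory of $\CAlg(\Sp)$. The plan is to transport this to $C_p$-spectra. I will show that both $N_e^{C_p}(L\bbS)$ and $\infl_e^{C_p}(L\bbS)$ are idempotent commutative algebras in $\Sp^{C_p}$ that are "local for the same localization". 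They then both represent the same property, which gives a unique equivalence between them.

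\textbf{Step 1: idempotence of each.} The norm is symmetric monoidal, so $N_e^{C_p}$ sends the equivalence $L\bbS\otimes L\bbS\to L\bbS$ to an equivalence $N(L\bbS)\otimes N(L\bbS)\to N(L\bbS)$. Hence $N(L\bbS)$ is idempotent. Inflation is also symmetric monoidal, so $\infl(L\bbS)$ is idempotent. Idempotent commutative algebras $E$ correspond to smashing localizations $E\otimes(-)$ of $\Sp^{C_p}$. Two of them are equivalent as algebras, necessarily uniquely, if and only if each admits a unit-compatible map from the other. Equivalently, $N(L\bbS)\otimes \infl(L\bbS)$ receives equivalences from both factors via the units.

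\textbf{Step 2: comparison via a conservative family.} I will check that the two unit maps
\[
N(L\bbS)\to N(L\bbS)\otimes\infl(L\bbS)\leftarrow \infl(L\bbS)
\]
are equivalences, and I will detect this on the jointly conservative functors $\res^{C_p}_e$ and $\Phi^{C_p}$. On underlying spectra, $\res N(L\bbS)=(L\bbS)^{\otimes p}\simeq L\bbS$ and $\res\infl(L\bbS)=L\bbS$. Both maps become unit maps of $L\bbS$ tensored with idempotents, so they are equivalences. On geometric fixed points, $\Phi^{C_p}N_e^{C_p}X\simeq X$ (diagonal), and $\Phi^{C_p}\infl X\simeq X$. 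Since $\Phi^{C_p}$ is symmetric monoidal, both sides again become $L\bbS\to L\bbS\otimes L\bbS$, which are equivalences. Hence $N(L\bbS)\simeq N(L\bbS)\otimes\infl(L\bbS)\simeq \infl(L\bbS)$ as commutative algebras.

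\textbf{Step 3: uniqueness.} Uniqueness follows because $\map_{\CAlg(\Sp^{C_p})}(E,E')$ is contractible whenever $E$ is idempotent and $E'$ is an $E$-algebra. This is the standard fact that $\CAlg_{E/}\to\CAlg$ is fully faithful for idempotent $E$. The equivalence space is then a component of a contractible space.

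\textbf{Main obstacle.} The main care point is Step 2. I need to identify the geometric fixed points of the unit maps, and not just the objects, with the maps $L\bbS\to L\bbS\otimes L\bbS$. This should follow from the diagonal equivalence $\Phi^{C_p}N_e^{C_p}\simeq\id$ being symmetric monoidal and compatible with units, but I would verify it explicitly.
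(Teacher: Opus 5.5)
Your proposal is correct and follows essentially the same route as the paper. Both arguments get idempotence of $N_e^{C_p}(L\bbS)$ and $\infl_e^{C_p}(L\bbS)$ from strong monoidality, check the zigzag through $N_e^{C_p}(L\bbS)\otimes\infl_e^{C_p}(L\bbS)$ on underlying spectra and geometric fixed points, and deduce uniqueness from $\otimes$-idempotence; the point you flag as the main obstacle is handled as you suggest, since $\Phi^{C_p}N_e^{C_p}\simeq\id$ and $\Phi^{C_p}\infl_e^{C_p}\simeq\id$ are symmetric monoidal and so carry unit maps to unit maps.
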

\begin{proof}
As both $N_e^{C_p}$ and $\infl_e^{C_p}$ are strong monoidal, the condition that $L$ is smashing ensures that both $N_e^{C_p}(L\bbS)$ and $\infl_e^{C_p}(L\bbS)$ are $\otimes$-idempotent in $\Sp^{C_p}$. The zigzag
\[
N_e^{C_p}(L\bbS) \to N_e^{C_p}(L\bbS) \otimes \infl_e^{C_p}(L\bbS) \leftarrow \infl_e^{C_p}(L\bbS)
\]
is seen to be an equivalence on underlying spectra and geometric fixed points, and is thus an equivalence; uniqueness follows from $\otimes$-idempotence.
\end{proof}

Functoriality of the smash product induces for any spectrum $Y$ a pointed map
\[
P\colon \Omega^\infty Y = \map_{\Sp}(\bbS,Y) \to \map_{\Sp}(\bbS^{\otimes p},Y^{\otimes p})\simeq\map_{\Sp}(\bbS,Y^{\otimes p})\simeq \Omega^\infty(Y^{\otimes p}),
\]
and given a pointed space $A$ we set
\[
\varphi_p = P - \Omega^\infty L\Sigma^\infty \widetilde{\Delta} \colon \Omega^\infty L\Sigma^\infty A \to \Omega^\infty L\Sigma^\infty A^{\otimes p},
\]
where $\widetilde{\Delta}\colon A \to A^{\wedge p}$ is the diagonal. The following specializes to \cref{introthm:smashingkp} for $A = S^0$.

\begin{theorem}\label{thm:lkpg}
Let $L$ be a smashing localization of spectra. Then there is a preferred natural lift in the diagram
\begin{center}\begin{tikzcd}
&\Omega^\infty L\Sigma^\infty A^{\otimes p}_{\h C_p}\ar[d,"\Omega^\infty\tr"]\\
\Omega^\infty L \Sigma^\infty A\ar[r,"\varphi_p"]\ar[ur,dashed]&\Omega^\infty L\Sigma^\infty A^{\otimes p}
\end{tikzcd}.\end{center}
\end{theorem}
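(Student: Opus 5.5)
We run the equivariant argument sketched in the introduction, working with the $C_p$-spectrum $X = \Sigma^\infty_{C_p} A^{\wedge p}$ (the suspension spectrum of $A^{\wedge p}$ with permutation action) rather than the sphere. There are two natural pointed maps $\Omega^\infty L\Sigma^\infty A \to \Omega^\infty (L_{C_p} X)^{C_p}$, where $L_{C_p} = \infl_e^{C_p}(L\bbS)\otimes(-)$ denotes the smashing localization on $\Sp^{C_p}$ induced by $L$. The first is the norm: a point $f\colon \bbS\to L\Sigma^\infty A$ goes to $N_e^{C_p}f\colon N_e^{C_p}\bbS = \bbS_{C_p}\to N_e^{C_p}(L\Sigma^\infty A)$. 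Here $N_e^{C_p}(L\Sigma^\infty A)\simeq N_e^{C_p}(L\bbS)\otimes N_e^{C_p}\Sigma^\infty A \simeq L_{C_p}X$ by \cref{lem:normsmash} and the identification $N_e^{C_p}\Sigma^\infty A\simeq \Sigma^\infty_{C_p}A^{\wedge p}$. The second map is $\Omega^\infty$ of the $C_p$-equivariant diagonal $\widetilde\Delta\colon A\to A^{\wedge p}$, with $A$ given the trivial action, applied after inflation: $L\Sigma^\infty A\to (\infl_e^{C_p}L\Sigma^\infty A)^{C_p}\to (L_{C_p}X)^{C_p}$. Both maps are natural in $A$. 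On underlying spectra they restrict to $P$ and to $\Omega^\infty L\Sigma^\infty\widetilde\Delta$ respectively, so their difference $\psi$ satisfies $\res^{C_p}_e\circ\psi=\varphi_p$.

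Next we check that the two maps agree after composing with $\Omega^\infty$ of the geometric fixed point map $(L_{C_p}X)^{C_p}\to \Phi^{C_p}L_{C_p}X$. Since $L_{C_p}$ is smashing and $\Phi^{C_p}$ is symmetric monoidal with $\Phi^{C_p}\infl = \id$, we get $\Phi^{C_p}L_{C_p}X\simeq L\Phi^{C_p}X\simeq L\Sigma^\infty A$, using $\Phi^{C_p}\Sigma^\infty_{C_p}A^{\wedge p}\simeq\Sigma^\infty (A^{\wedge p})^{C_p}=\Sigma^\infty A$. Under these identifications, $\Phi^{C_p}N_e^{C_p}\simeq\id$ sends the norm map to the identity of $\Omega^\infty L\Sigma^\infty A$. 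The diagonal map also becomes the identity, because the diagonal $A\to (A^{\wedge p})^{C_p}$ is an isomorphism. One must make sure that the two identifications of $\Phi^{C_p}N_e^{C_p}L\bbS$ with $L\bbS$ match: one comes from \cref{lem:normsmash} and the other from the canonical equivalence $\Phi^{C_p}N\simeq\id$. Uniqueness in \cref{lem:normsmash}, via $\otimes$-idempotence, forces them to agree. Hence $\psi$ composed with geometric fixed points is canonically null, and $\psi$ lifts to $\Omega^\infty$ of the fiber of $(L_{C_p}X)^{C_p}\to\Phi^{C_p}L_{C_p}X$.

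Finally we identify that fiber. Isotropy separation gives $\Fib\simeq ((EC_{p})_+\otimes L_{C_p}X)^{C_p}$. Since $L_{C_p}$ commutes with $(EC_p)_+\otimes(-)$, the Adams isomorphism gives $((EC_p)_+\otimes L_{C_p}X)^{C_p}\simeq L (X_{\h C_p})=L\Sigma^\infty A^{\otimes p}_{\h C_p}$. Under this identification, the composite of the fiber inclusion with $\res^{C_p}_e$ is the transfer; this is the standard compatibility of the Adams isomorphism with restriction. Therefore the lift $H$ satisfies $\Omega^\infty\tr\circ H\simeq\res\circ\psi=\varphi_p$, and every step is natural in $A$.

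The main obstacle is making the nullhomotopy after geometric fixed points coherent and natural in $A$, with the identifications of $\Phi^{C_p}N_e^{C_p}$ and of the localized norm properly compatible. We handle this by working throughout with symmetric monoidal functors and invoking the uniqueness clause of \cref{lem:normsmash}.
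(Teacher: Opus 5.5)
Your proposal is correct and is essentially the paper's proof: the norm map and the inflated, localized equivariant diagonal both land in $\Omega^\infty(LN_e^{C_p}\Sigma^\infty A)^{C_p}$, restrict to $P$ and $\Omega^\infty L\Sigma^\infty\widetilde{\Delta}$ respectively, and become the identity after $\Phi^{C_p}$. Their difference therefore lifts to the isotropy-separation fiber, which the Adams isomorphism identifies with $L\Sigma^\infty A^{\otimes p}_{\h C_p}$, with restriction becoming the transfer. Your extra check that the two identifications of $\Phi^{C_p}N_e^{C_p}(L\bbS)$ with $L\bbS$ agree, via $\otimes$-idempotence of $L\bbS$, is a point the paper leaves implicit, and you handle it correctly.
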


\begin{proof}
Given a $C_p$-spectrum $X$, abbreviate 
\[
LX = N_e^{C_p}(L\bbS) \otimes X = \infl_e^{C_p}(L\bbS) \otimes X.
\]
For any spectrum $Y$, functoriality of the norm provides a map
\[
N_e^{C_p}\colon \Omega^\infty LY \to \Omega^\infty (LN_e^{C_p}Y)^{C_p}
\]
of pointed spaces whose composite with the geometric fixed point map
\[
\Omega^\infty\Phi^{C_p}\colon  \Omega^\infty (LN_e^{C_p}Y)^{C_p} \to \Omega^\infty \Phi^{C_p}LN_e^{C_p}Y \simeq LY
\]
is the identity, and whose composite with the restriction
\[
\Omega^\infty\res^{C_p}_e\colon \Omega^\infty (LN_e^{C_p}Y)^{C_p} \to \Omega^\infty LY^{\otimes p}
\]
is the transformation $P$ considered above.

On the other hand, the diagonal of a pointed space $A$ refines to a $C_p$-equivariant map $\infl_e^{C_p}(A) \to N_e^{C_p}(A)$. Here, we write again $N_e^{C_p}$ for the unstable norm for pointed spaces, i.e.\ $N_e^{C_p}(A) = A^{\wedge p}$ with cyclic action on coordinates. This suspends and localizes to provide a map
\[
\rho_A\colon L \Sigma^\infty A \to (L\Sigma^\infty N_e^{C_p}A)^{C_p} \simeq (LN_e^{C_p}\Sigma^\infty A)^{C_p}
\]
whose composite with the geometric fixed point map
\[
\Phi^{C_p}\colon (LN_e^{C_p}\Sigma^\infty A)^{C_p} \to \Phi^{C_p}LN_e^{C_p}\Sigma^\infty A \simeq L\Sigma^\infty A
\]
is the identity, and whose composite with the restriction
\[
\res^{C_p}_e\colon (LN_e^{C_p}\Sigma^\infty A)^{C_p} \to L\Sigma^\infty A^{\otimes p}
\]
is $L\Sigma^\infty\widetilde{\Delta}$. Now consider the isotropy separation sequence
\[
EC_{p+} \to \bbS_{C_p} \to \widetilde{E}C_p
\]
in $\Sp^{C_p}$, providing for any $X \in \Sp^{C_p}$ the fiber sequence
\begin{center}\begin{tikzcd}
(E C_{p+} \otimes X)^{C_p}\ar[r]\ar[d,equals]& X^{C_p}\ar[r]\ar[d,equals]&(\widetilde{E}C_p \otimes X)^{C_p}\ar[d,equals]\\
X_{\h C_p}\ar[r]&X^{C_p}\ar[r]&\Phi^{C_p} X
\end{tikzcd}.\end{center}
By the above discussion, we obtain for any pointed space $A$ a natural lift in the diagram
\begin{center}\begin{tikzcd}[column sep=huge, row sep=large]
&\Omega^\infty L\Sigma^\infty A^{\otimes p}_{\h C_p}\ar[d]\ar[dr,"\tr"]\\
\Omega^\infty L\Sigma^\infty A\ar[ur,dashed,"H"]\ar[r,"N_e^{C_p} - \Omega^\infty \rho_A"]\ar[dr,"0"]&\Omega^\infty (LN_e^{C_p} A)^{C_p}\ar[r,"\res^{C_p}_e"]\ar[d,"\Phi^{C_p}"]&\Omega^\infty L\Sigma^\infty A^{\otimes p}\\
&\Omega^\infty L\Sigma^\infty A
\end{tikzcd}\end{center}
for which the horizontal composite is exactly $\varphi_p$.
\end{proof}

This concludes the proof of \cref{introthm:smashingkp}. Several minor variations are possible: for example, the reader may observe that the proof really shows the following.

\begin{prop}
Let $R$ and $C$ be spectra equipped with maps
\[
\nabla\colon N_e^{C_p}(R) \to \infl_e^{C_p}(R),\qquad \Delta\colon \infl_e^{C_p}(C) \to N_e^{C_p}(C)
\]
of $C_p$-spectra which are the identity on geometric fixed points. Then there is a lift in the diagram
\begin{center}\begin{tikzcd}[column sep=7cm, row sep=huge]
&\Omega^\infty(R\otimes C^{\otimes p}_{\h C_p})\ar[d,"\Omega^\infty(R\otimes \tr)"]\\
\Omega^\infty R\otimes C\ar[ur,dashed]\ar[r,"\Omega^\infty(\res^{C_p}_e(\nabla)\otimes C^{\otimes p})\circ P - \Omega^\infty(R \otimes \res^{C_p}_e(\Delta))"]&\Omega^\infty R \otimes C^{\otimes p}
\end{tikzcd}\end{center}
of pointed spaces.
\qed
\end{prop}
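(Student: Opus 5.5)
We follow the proof of \cref{thm:lkpg}, now with the $C_p$-spectrum $X = \infl_e^{C_p}(R)\otimes N_e^{C_p}(C)$. Its underlying spectrum is $R\otimes C^{\otimes p}$, and its geometric fixed points are
\[
\Phi^{C_p}X \simeq R\otimes C,
\]
since $\Phi^{C_p}$ is symmetric monoidal, $\Phi^{C_p}\infl_e^{C_p}\simeq\id$, and $\Phi^{C_p}N_e^{C_p}\simeq\id$. The homotopy orbits are $X_{\h C_p}\simeq R\otimes C^{\otimes p}_{\h C_p}$, because $\infl_e^{C_p}(R)$ carries the trivial action and homotopy orbits commute with tensoring by a spectrum with trivial action. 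Isotropy separation then gives a fiber sequence
\[
R\otimes C^{\otimes p}_{\h C_p}\to X^{C_p}\to R\otimes C.
\]
Under the Adams isomorphism, the composite of the first map with $\res^{C_p}_e$ is $R\otimes\tr$.

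Next we build two pointed maps $\Omega^\infty(R\otimes C)\to\Omega^\infty X^{C_p}$, each lying over the identity after applying $\Phi^{C_p}$.

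The first map comes from the norm. Functoriality of $N_e^{C_p}$ on mapping spaces, together with $N_e^{C_p}\bbS\simeq\bbS_{C_p}$, gives a map
\[
\Omega^\infty(R\otimes C)\to\Omega^\infty N_e^{C_p}(R\otimes C)^{C_p}.
\]
We identify $N_e^{C_p}(R\otimes C)\simeq N_e^{C_p}R\otimes N_e^{C_p}C$ using strong monoidality, and then postcompose with $\nabla\otimes N_e^{C_p}C$. On geometric fixed points this is $\Phi^{C_p}(\nabla)\otimes\id$ applied to $\Phi^{C_p}N_e^{C_p}$, hence the identity by hypothesis. On underlying spectra it is $(\res^{C_p}_e\nabla\otimes C^{\otimes p})\circ P$, once we use the identification $(R\otimes C)^{\otimes p}\simeq R^{\otimes p}\otimes C^{\otimes p}$ implicit in the statement.

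The second map is $\Omega^\infty$ of the stable map
\[
R\otimes C\to (\infl_e^{C_p}R\otimes\infl_e^{C_p}C)^{C_p}\to X^{C_p}.
\]
The first arrow is the unit $Y\to(\infl_e^{C_p}Y)^{C_p}$ applied to $Y=R\otimes C$, and the second is $(R\otimes\Delta)^{C_p}$. Its composite with $\Phi^{C_p}$ is $\id\otimes\Phi^{C_p}(\Delta)=\id$, and on underlying spectra it is $R\otimes\res^{C_p}_e(\Delta)$.

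Finally we take the difference of these two maps in the grouplike $\bbE_\infty$-space $\Omega^\infty X^{C_p}$. It is a pointed map whose composite with $\Omega^\infty\Phi^{C_p}$ is null: it is $\id-\id$, and this comes with a canonical nullhomotopy. So it lifts canonically to the fiber $\Omega^\infty(R\otimes C^{\otimes p}_{\h C_p})$. Composing with restriction gives the displayed horizontal map, and by the previous paragraph restriction on the fiber is $\Omega^\infty(R\otimes\tr)$.

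The main obstacle is the bookkeeping of the compatibilities. One must check that the various identifications of $\Phi^{C_p}$ of norms and inflations agree, so that the two maps really coincide with the same identity on geometric fixed points. One must also check that the inclusion $X_{\h C_p}\to X^{C_p}$ restricts to $R\otimes\tr$; this is handled exactly as in \cref{thm:lkpg}, using naturality of the Adams isomorphism in $X$.
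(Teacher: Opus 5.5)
Your proposal is correct and is essentially the paper's argument. The paper gives no separate proof: it states this proposition as what the proof of \cref{thm:lkpg} really shows, and that proof is yours with $L\bbS$ and $\Sigma^\infty A$ in place of $R$ and $C$. There, $\nabla$ and $\Delta$ are the idempotence equivalence of \cref{lem:normsmash} and the equivariant diagonal, the norm and diagonal maps agree on geometric fixed points, and isotropy separation for $\infl_e^{C_p}(R)\otimes N_e^{C_p}(C)$ lifts their difference to the homotopy orbits, where restriction is the transfer.
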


For example, this applies when $R$ is a $\bbE_\infty$ ring whose Tate Frobenius and unit maps $R \to R^{t C_p}$ agree and $C = \Sigma^\infty A$ for a pointed space $A$.

We make some further observations. The Kahn--Priddy theorem is traditionally stated $p$-locally, with $\Sigma_p$ in place of $C_p$. One may translate between the two statements as follows. If
\[
\Hol(C_p) = C_p \rtimes \Aut(C_p),
\]
then as the transfer $\Sigma^\infty_+ BC_p \to \bbS$ is $\Aut(C_p)$-equivariant it factors through a map
\[
\tr_p\colon \Sigma^\infty_+ B\Hol(C_p) \to \bbS.
\]
In this way we obtain a commutative diagram
\begin{center}\begin{tikzcd}
\Sigma^\infty_+ B\Sigma_p\ar[r,"\tr"]\ar[dr,"\tr"']&\Sigma^\infty_+BC_p\ar[d,"\tr"]\ar[r,"i"]&\Sigma^\infty_+B\Hol(C_p)\ar[dl,"\tr_p"]\\
&S
\end{tikzcd},\end{center}
where each $\tr$ is a transfer along the relevant subgroup inclusion and $i$ is induced by the inclusion $C_p\subset\Hol(C_p)$. The composite
\[
i\circ\tr\colon \Sigma^\infty_+ B\Sigma_p \to \Sigma^\infty_+ BC_p \to \Sigma^\infty_+ B\Hol(C_p)
\]
is a $p$-local equivalence, and this allows one to translate between theorems about $\tr\colon \Sigma^\infty_+ BC_p \to \bbS$ and $p$-local theorems about $\tr\colon B\Sigma_p \to \bbS$. As a particular case we may derive an $L$-local analogue of Kuhn's delooped Kahn--Priddy theorem \cite[Proposition 2.1]{kuhn1982geometry}.

\begin{prop}\label{prop:sigmap}
Write
\[
\widetilde{\tr}\colon \Sigma^\infty B\Sigma_p \to \bbS
\]
for the transfer restricted to the summand $\Sigma^\infty B\Sigma_p\subset\Sigma^\infty_+ B\Sigma_p$. If $L$ is a $p$-local smashing localization of spectra, then there is a pointed map
\[
\widetilde{H}\colon \Omega^\infty \Sigma L\bbS \to \Omega^\infty \Sigma L\Sigma^\infty B\Sigma_p
\]
for which the composite
\[
\Omega^\infty\widetilde{\tr}\circ \widetilde{H}\colon \Omega^\infty \Sigma L\bbS \to \Omega^\infty \Sigma L \Sigma^\infty B\Sigma_p \to \Omega^\infty \Sigma L\bbS
\]
is an equivalence on simply connected covers.
\end{prop}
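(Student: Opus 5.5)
We deduce this from \cref{thm:lkpg} applied with $A = S^1$, followed by the $p$-local translation from $C_p$ to $\Sigma_p$ described just above.

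\textbf{Step 1: apply \cref{thm:lkpg} to $A = S^1$.} Since $\Sigma^\infty S^1 = \Sigma\bbS$, we get a natural lift
\[
H\colon \Omega^\infty \Sigma L\bbS \to \Omega^\infty L\Sigma^\infty (S^{1})^{\wedge p}_{\h C_p}
\]
of $\varphi_p = P - \Omega^\infty L\Sigma^\infty\widetilde{\Delta}$ through the transfer. The reduced diagonal $S^1 \to S^{\wedge p}$ is null, since it factors through a highly connected space, so $\Sigma^\infty\widetilde{\Delta}$ is nullhomotopic. Thus $\varphi_p$ agrees with $P$ up to a homotopy. Here $P\colon \Omega^\infty\Sigma L\bbS \to \Omega^\infty \Sigma^p L\bbS$ (with sign twist) is the ``$p$-th power'' map. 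On $\pi_k$ for $k\ge 2$ it should vanish: homotopy classes there come from $\pi_{k-1}L\bbS$ with $k-1\geq 1$, and a $p$-fold product of positive-degree classes suspended lands at a shift where, at least on homotopy groups, $P$ is zero on non-basepoint components.

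\textbf{Step 1 fails as set up.} This shows the naive choice is wrong: $\varphi_p$ is not the identity for $A = S^1$. So instead I would use the $A=S^0$ case, $\Omega^\infty L\bbS \to \Omega^\infty L\Sigma^\infty_+BC_p$, and deloop it once. That means a single suspension, which is exactly the content of the ``delooped'' statement.

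\textbf{Step 2: deloop the $A=S^0$ lift using additivity.} The route is to work with the basepoint component and the infinite-loop structure. Since $\Omega\varphi_p \simeq -1$ as noted after \cref{eq:phip}, on $\Omega(\Omega^\infty L\bbS) = \Omega^\infty\Sigma^{-1}L\bbS$ the looped lift $\Omega H$ is a section of $-\Omega^\infty\Sigma^{-1}\tr$. That alone is the wrong direction. Instead, I would use naturality in $A$ with $A = S^1$ together with the equivariant identification $(S^1)^{\wedge p} \simeq S^{\rho}$, with $\rho$ the regular representation. Then
\[
\Sigma^\infty (S^{\rho})_{\h C_p} \simeq \Sigma\,\Th(\bar\rho \to BC_p),
\]
a Thom spectrum. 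The $p$-local translation via $\Hol(C_p)$ then turns this, after passing to $\Sigma_p$ and $p$-localizing, into $\Sigma L\Sigma^\infty B\Sigma_p$ (up to the appropriate Thom-class twist by the sign representation, i.e.\ the $\Sigma_p$-analogue of Kuhn's argument). Composing $H$ with the transfer $\Sigma_p\to$ and the Thom collapse yields $\widetilde{H}$.

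\textbf{Step 3: check the composite.} One must show $\Omega^\infty\widetilde{\tr}\circ\widetilde{H}$ induces an isomorphism on $\pi_k$ for $k\geq 2$. It is an $H$-map up to the correction $P$, and $P$ is nonadditive but trivial on homotopy groups in this range. So it suffices to compute the effect on $\pi_k$. By naturality in $A$ along $S^1$, this reduces to the $S^0$ computation, where the looped $\varphi_p$ is $-1$.

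\textbf{Main obstacle.} I expect the main obstacle to be identifying the $C_p$-homotopy orbits of $S^{\rho}$ with $\Sigma B\Sigma_p$ $p$-locally, compatibly with the transfers. That is, one must verify that after the $\Hol(C_p)$-averaging the Thom twist becomes trivial on the relevant summand. I would try to establish this by checking mod-$p$ homology and using that both sides are $p$-local with $\Aut(C_p)$ acting.
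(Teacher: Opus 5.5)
Your eventual route in Steps 2--3 is the paper's:
\begin{itemize}
\item apply \cref{thm:lkpg} with $A=S^1$;
\item push forward along $C_p\subset\Hol(C_p)$, using the map $i$ induced by the inclusion rather than a transfer;
\item identify $L(\Sigma\bbS)^{\otimes p}_{\h\Hol(C_p)}\simeq\Sigma L\Sigma^\infty B\Hol(C_p)$, using that $S(\ol{\rho})_{\h\Hol(C_p)}\to\ast$ is a $p$-local homology equivalence;
\item check the composite by looping down to $A=S^0$.
\end{itemize}
Step 1 is a red herring. That $\varphi_p$ is trivial on positive homotopy for $A=S^1$ does not matter, because the composite you must control is $\Sigma\widetilde{\tr}$ after passing to $\Hol(C_p)$-orbits, not the transfer appearing in \cref{thm:lkpg}. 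Likewise no $H$-map argument is needed, since $\pi_k$ of a pointed map is $\pi_{k-1}$ of its loop.

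The real content of naturality in $A$ is the following. $\Omega\widetilde{H}$ equals $H$ (for $A=S^0$), followed by $i$, followed by $\Omega^\infty$ of the desuspended coassembly map $\Sigma(\bbS^{\otimes p}_{\h\Hol(C_p)})\to(\Sigma\bbS)^{\otimes p}_{\h\Hol(C_p)}$. The Thom space identification says precisely that this coassembly map is the projection $\Sigma q$ onto the reduced summand. So the looped composite is $\widetilde{\tr}_p\circ q\circ i\circ H$, not $\tr\circ H=\varphi_p$.

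This is where your Step 3 has a genuine gap. The claim ``this reduces to the $S^0$ computation, where the looped $\varphi_p$ is $-1$'' does not follow, because you never discard the bottom cell. Under $L\Sigma^\infty_+B\Hol(C_p)\simeq L\bbS\oplus L\Sigma^\infty B\Hol(C_p)$, the transfer $\tr_p$ is the degree $p$ map on the first summand plus $\widetilde{\tr}_p$ on the second. So if $a$ is the first component of $i\circ H$, then on $\pi_k$ for $k>0$ the map $\widetilde{\tr}_p\circ q\circ i\circ H$ induces $-1-p\,a_\ast$.

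To conclude that this is an isomorphism, you need that $\pi_kL\bbS$ is $p$-power torsion for $k\neq 0$, which holds because $L$ is $p$-local and smashing. Then $1+p\,a_\ast$ is invertible: $a_\ast$ is a homomorphism and every element is killed by a power of $p$, so the geometric series for the inverse terminates on each element. This is the essential use of the hypothesis on $L$ in the final step, and it is absent from your proposal. With it inserted, and with the coassembly identification made explicit, your outline becomes the paper's proof.
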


\begin{proof}
By the above discussion, we may as well replace $\Sigma_p$ with $\Hol(C_p)$. \cref{thm:lkpg} provides a map $H$ for which the composite
\begin{center}\begin{tikzcd}
\Omega^\infty L\bbS\ar[r,"H"]&\Omega^\infty L\Sigma^\infty_+BC_p\ar[r,"i"]&\Omega^\infty L\Sigma^\infty_+ B\Hol(C_p)\ar[r,"\Omega^\infty L \tr_p"]&\Omega^\infty L\bbS
\end{tikzcd}\end{center}
induces multiplication by $-1$ on $\pi_k$ for $k > 0$. Under the splitting
\[
L\Sigma^\infty_+B\Hol(C_p) \simeq L\bbS \oplus L\Sigma^\infty B\Hol(C_p),
\]
we may identify $L\tr_p$ as a sum of the degree $p$ map on $L\bbS$ and a map $L\widetilde{\tr}_p\colon L\Sigma^\infty B \Hol(C_p) \to L\bbS$. As $L$ is a $p$-local smashing localization, $\pi_k L \bbS$ is $p$-power torsion for $k \neq 0$. Writing
\[
q\colon \Sigma^\infty_+ B\Hol(C_p)\to\Sigma^\infty B\Hol(C_p)
\]
for the projection, it follows that the composite
\begin{center}\begin{tikzcd}[column sep=small]
\Omega^\infty L\bbS\ar[r,"H"]&\Omega^\infty L\Sigma^\infty_+ BC_p \ar[r,"i"]&\Omega^\infty L\Sigma^\infty_+ B\Hol(C_p)\ar[r,"q"]&\Omega^\infty L\Sigma^\infty B\Hol(C_p)\ar[r,"L\tilde{\tr}_p"]&\Omega^\infty L\bbS
\end{tikzcd}\end{center}
still induces an isomorphism on $\pi_k$ for $k > 0$, and so is an equivalence on connected covers. It therefore suffices to prove that the composite $q\circ i \circ H$ deloops as indicated.
If $\ol{\rho}$ is the reduced permutation representation of $\Hol(C_p)$ acting on $C_p$ and $S(\ol{\rho})$ is its unit sphere, then $S(\ol{\rho})_{\h\Hol(C_p)}\to \ast$ is a homology equivalence, and therefore
\[
\Sigma^\infty\Th(\ol{\rho}\downarrow B\Hol(C_p))\simeq \Sigma^\infty B\Hol(C_p).
\]
In other words, we may identify the coassembly map
\[
\Sigma (\bbS^{\otimes p}_{\h \Hol(C_p)}) \to (\Sigma\bbS)^{\otimes p}_{\h \Hol(C_p)}
\]
with the projection
\[
\Sigma q \colon \Sigma \Sigma^\infty_+B\Hol(C_p) \to \Sigma \Sigma^\infty B\Hol(C_p).
\]
\cref{thm:lkpg} for $A = S^1$ therefore provides a map
\[
\widetilde{H}\colon \Omega^\infty \Sigma L\bbS \to \Omega^\infty L(\Sigma \bbS)^{\otimes p}_{\h C_p} \to \Omega^\infty L(\Sigma\bbS)^{\otimes p}_{\h \Hol(C_p)}\simeq \Omega^\infty \Sigma L\Sigma^\infty B \Hol(C_p)
\]
which deloops $q\circ i \circ H$ as needed.
\end{proof}

\section{The motivic Kahn--Priddy theorem}\label{ssec:motivickp}

Let $B$ be a qcqs scheme with $p\in \calO_B^\times$, and write
\[
\Sigma^\infty_+ : \calH(B) \rightleftarrows \SH(B) : \Omega^\infty
\]
for the categories of $B$-motivic spaces and motivic spectra and the usual adjunction between them. In \cite{bachmannelmantoheller2021motivic}, Bachmann--Elmanto--Heller introduce a \emph{motivic extended power} construction. Restricted to $p$th cyclic powers, this is a functor
\[
D^{\mathrm{mot}}_{B_{\et}C_p}\colon \SH(B) \to \SH(B),\qquad Y \mapsto Y^{\otimes p}_{\h\mu_p} = \colim_{B_{\et}C_p}Y^{\otimes p}
\]
defined using a theory of motivic colimits for which, in particular,
\[
(\bbone_B^{\otimes p})_{\h\mu_p}\simeq \Sigma^\infty_+ B_{\et}C_p
\]
is the motivic suspension spectrum of the classifying space of \'etale $C_p$-torsors constructed by Morel and Voevodsky \cite{morelvoevodsky1999a1}.

As explained in \cite[Section 6]{bachmannelmantoheller2021motivic}, this construction may also be derived from the theory of norms in the motivic homotopy theory of stacks developed by Bachmann in \cite{bachmann2022motivic}, extending the theory for schemes developed by Bachmann--Hoyois \cite{bachmannhoyois2021norms}. Write
\[
\Sigma^\infty_+ : \calH^{C_p}(B) \rightleftarrows \SH^{C_p}(B) : \Omega^\infty
\]
for the categories of $C_p$-equivarant $B$-motivic spaces and spectra and the usual adjunction between them, in the sense of Hoyois \cite{hoyois2017six}. Then as a special case of the constructions in \cite[Sections 3.3, 3.4]{bachmann2022motivic}, applied to the tautological maps
\[
i\colon B \to B/\!/C_p,\qquad q\colon B/\!/C_p \to B
\]
of stacks with $C_p$ acting trivially on $B$, one has various functors between motivic categories that can be summarized in the following diagram:
\begin{equation*}\label{eq:context}
\begin{tikzcd}[column sep=1.25cm, row sep=1.25cm]
&\calH(B)\\
\calH(B)\ar[r,"N_e^{C_p}","i_\otimes"']\ar[ur,"(\bs)^{\times p}"]\ar[dr,equals]&\calH^{C_p}(B)\ar[u,"i^\ast","\res^{C_p}_e"']\ar[d,"q_\otimes"',"(\bs)^{C_p}"]&\calH(B)\ar[ul,equals]\ar[l,"\infl^{C_p}_e"',"q^\ast"]\ar[dl,equals]\\
&\calH(B)
\end{tikzcd}
\xRightarrow{\Sigma^\infty_+}
\begin{tikzcd}[column sep=1.25cm, row sep=1.25cm]
&\SH(B)\\
\SH(B)\ar[r,"N_e^{C_p}","i_\otimes"']\ar[ur,"(\bs)^{\otimes p}"]\ar[dr,equals]&\SH^{C_p}(B)\ar[u,"i^\ast","\res^{C_p}_e"']\ar[d,"q_\otimes"',"\Phi^{C_p}"]&\SH(B)\ar[ul,equals]\ar[l,"\infl^{C_p}_e"',"q^\ast"]\ar[dl,equals]\\
&\SH(B)
\end{tikzcd},\end{equation*}
and
\[
Y^{\otimes_p}_{\h\mu_p} = (N_e^{C_p} Y \otimes \bfE C_{p+})/C_p
\]
is the motivic homotopy orbits, in the sense of Gepner--Heller \cite{gepnerheller2023tom}, of the motivic $C_p$-spectrum $N_e^{C_p}Y$

We can now set the stage for the motivic Kahn--Priddy theorem. Write
\[
\tr\colon Y^{\otimes p}_{\h\mu_p} = (N_e^{C_p} Y \otimes \bfE C_{p+})/C_p\simeq (N_e^{C_p}Y\otimes \bfE C_{p+})^{C_p}  \to \res^{C_p}_e(N_e^{C_p}Y\otimes \bfE C_{p+}) \simeq Y^{\otimes p}
\]
for the transfer map obtained from the motivic Adams isomorphism, established by Gepner--Heller \cite{gepnerheller2023tom}. Given a motivic space $A$, write $\Delta_A\colon A \to A^{\times p}$ for the diagonal. As $\Omega^\infty$ is lax monoidal, one has a natural map
\[P : \begin{tikzcd}
\Omega^\infty Y\ar[r,"\Delta_{\Omega^\infty Y}"]&(\Omega^\infty Y)^{\times p}\ar[r]&\Omega^\infty(Y^{\otimes p})
\end{tikzcd}
\]
for any motivic spectrum $Y$. Given a motivic space $A$, set
\[
\varphi_p = P - \Omega^\infty\Sigma^\infty_+ \Delta_A \colon \Omega^\infty\Sigma^\infty_+ A \to \Omega^\infty\Sigma^\infty_+ (A^{\times p})\simeq \Omega^\infty (\Sigma^\infty_+ A)^{\otimes p}.
\]
The following specializes to \cref{introthm:motivickp} for $A = S^0$.

\begin{theorem}\label{thm:motivickp}
There is a preferred natural lift in the diagram
\begin{center}\begin{tikzcd}
&\Omega^\infty(\Sigma^\infty_+ A)^{\otimes p}_{\h\mu_p}\ar[d,"\tr"]\\
\Omega^\infty\Sigma^\infty_+ A\ar[r,"\varphi_p"]\ar[ur,dashed]&\Omega^\infty(\Sigma^\infty_+ A)^{\otimes p}
\end{tikzcd}\end{center}
of motivic spaces.
\end{theorem}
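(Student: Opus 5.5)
We follow the proof of \cref{thm:lkpg} verbatim, replacing the classical equivariant input by Bachmann's norms for the stack $B/\!/C_p$ and the Gepner--Heller isotropy separation and Adams isomorphism. Put $Y = \Sigma^\infty_+ A$. We first build two pointed maps
\[
\Omega^\infty Y \rightrightarrows \Omega^\infty (N_e^{C_p} Y)^{C_p}
\]
of motivic spaces. Here $(\bs)^{C_p}$ means the genuine categorical fixed points, i.e.\ the right adjoint to $\infl_e^{C_p} = q^\ast$, so that $\Omega^\infty(X^{C_p}) \simeq (\Omega^\infty X)^{C_p}$.

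The first map is the norm. Functoriality of $i_\otimes$ on mapping spaces gives
\[
\map_{\SH(B)}(\bbone_B, Y) \to \map_{\SH^{C_p}(B)}(\bbone, N_e^{C_p}Y).
\]
We need this as a map of motivic spaces and not only of global sections. We obtain it by working over all smooth $U \to B$ and using that norms commute with base change, as in \cite{bachmann2022motivic}. Its composite with $\res^{C_p}_e = i^\ast$ is $P$, because $i^\ast i_\otimes = (\bs)^{\otimes p}$ is compatible with the lax structure of $\Omega^\infty$. Its composite with the geometric fixed point map to $\Phi^{C_p} N_e^{C_p} Y = q_\otimes i_\otimes Y \simeq Y$ is the identity, since $q\circ i = \id$.

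The second map is the diagonal. The unstable diagonal refines to a map $q^\ast A \to i_\otimes A$ in $\calH^{C_p}(B)$, obtained as the unit/adjoint of $\id\colon A \to (i_\otimes A)^{C_p} = q_\otimes i_\otimes A\simeq A$. Applying $\Sigma^\infty_+$ and adjunction gives
\[
\rho_A\colon Y \to (N_e^{C_p}Y)^{C_p}.
\]
Its restriction is $\Sigma^\infty_+\Delta_A$, and its geometric fixed points are the identity, because $\Phi^{C_p}\Sigma^\infty_+ = \Sigma^\infty_+ q_\otimes$ on the space level.

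With both maps in hand, we form the difference $N_e^{C_p} - \Omega^\infty\rho_A$ in the grouplike $\bbE_\infty$-structure of $\Omega^\infty(N_e^{C_p}Y)^{C_p}$. This difference becomes null after composing with $\Omega^\infty\Phi^{C_p}$, and the nullhomotopy is canonical, coming from the two identifications with $\id$.

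Next we use the motivic isotropy separation sequence
\[
\bfE C_{p+} \to \bbone \to \widetilde{\bfE}C_p
\]
in $\SH^{C_p}(B)$ from Gepner--Heller. It gives a fiber sequence
\[
(N_e^{C_p}Y\otimes \bfE C_{p+})^{C_p} \to (N_e^{C_p}Y)^{C_p} \to (N_e^{C_p}Y\otimes\widetilde{\bfE}C_p)^{C_p}.
\]
The last term is identified with $\Phi^{C_p}N_e^{C_p}Y$; this is exactly how $\Phi^{C_p} = q_\otimes$ is defined or characterized in that setting. Since $\Omega^\infty$ preserves fiber sequences, the nullhomotopy yields a lift
\[
H\colon \Omega^\infty Y \to \Omega^\infty(N_e^{C_p}Y\otimes\bfE C_{p+})^{C_p}.
\]
The motivic Adams isomorphism identifies the target with $\Omega^\infty Y^{\otimes p}_{\h\mu_p}$. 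By the very definition of $\tr$ above, the composite $\tr\circ H$ is the restriction of the difference, namely $P - \Omega^\infty\Sigma^\infty_+\Delta_A = \varphi_p$. Naturality in $A$ holds because every ingredient is functorial.

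We expect the main obstacle to be the compatibility check underlying the second map. One must identify $\Phi^{C_p}$ of $\Sigma^\infty_+$ of the adjoint diagonal with the identity, which requires the equivalence $q_\otimes i_\otimes\simeq\id$ to match the composite
\[
(\bs)^{C_p}\circ N_e^{C_p}\simeq\id
\]
used on the space level. One must also check that the geometric fixed points in the isotropy separation sequence agree with Bachmann's $q_\otimes$. We would try first to reduce both points to the unit and counit identities for the pair $(q^\ast, q_\otimes)$ on $\calH^{C_p}(B)$, using that $\Sigma^\infty_+$ intertwines the unstable and stable $q_\otimes$. A secondary issue is promoting the norm on mapping spaces to a map of motivic spaces, which we handle as indicated by base change to smooth $B$-schemes.
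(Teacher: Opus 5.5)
Your proposal is correct and is essentially the paper's proof. It uses the same two maps $N_e^{C_p}$ and $\Omega^\infty\rho_A$ into $\Omega^\infty(N_e^{C_p}\Sigma^\infty_+A)^{C_p}$, which are equalized by $\Omega^\infty\Phi^{C_p}$ and restrict to $P$ and $\Omega^\infty\Sigma^\infty_+\Delta_A$; their difference is lifted through Gepner--Heller isotropy separation and identified with the transfer via the motivic Adams isomorphism. The only difference is cosmetic: the paper gets the norm as a map of motivic spaces from the mate $N_e^{C_p}\Omega^\infty Y\to\Omega^\infty N_e^{C_p}Y$ together with $(N_e^{C_p}\Omega^\infty Y)^{C_p}\simeq\Omega^\infty Y$, which packages the base-change coherence you handle by hand over smooth $U\to B$.
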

\begin{proof}
With everything in place, the proof is identical to that of the classical Kahn--Priddy theorem as given in detail in \cref{thm:lkpg}.

First, there is a natural mate transformation
\[
N_e^{C_p} \Omega^\infty Y \to \Omega^\infty N_e^{C_p} Y
\]
for $Y \in \SH(B)$. This provides for every motivic spectrum $Y$ a natural transformation
\[
N_e^{C_p}\colon \Omega^\infty Y\simeq (N_e^{C_p}\Omega^\infty Y)^{C_p} \to (\Omega^\infty N_e^{C_p} Y)^{C_p} \simeq \Omega^\infty (N_e^{C_p}Y)^{C_p}
\]
whose composite with the geometric fixed point map
\[
\Omega^\infty\Phi^{C_p}\colon \Omega^\infty (N_e^{C_p} Y)^{C_p} \to \Omega^\infty \Phi^{C_p}N_e^{C_p}E\simeq \Omega^\infty Y
\]
is the identity, and whose composite with the restriction
\[
\Omega^\infty \res^{C_p}_e\colon \Omega^\infty (N_e^{C_p} Y)^{C_p} \to \Omega^\infty Y^{\otimes p}
\]
is the transformation $P$ considered above.

Second, if $A$ is a motivic space, then the unit $A\simeq (N_e^{C_p}A)^{C_p}$ is adjoint to a map 
\[
\rho_A\colon \infl_e^{C_p} A \to N_e^{C_p} A.
\]
Using the mate $\infl_e^{C_p}\Omega^\infty \to \Omega^\infty\infl_e^{C_p}$, we obtain from this a map
\begin{align*}
\Omega^\infty\Sigma^\infty_+\rho_A\colon \Omega^\infty\Sigma^\infty_+A 
&= (\infl_e^{C_p}\Omega^\infty\Sigma^\infty_+ A)^{C_p}
\to (\Omega^\infty \infl_e^{C_p}\Sigma^\infty_+ A)^{C_p}
\\
&\simeq \Omega^\infty (\Sigma^\infty_+ \infl_e^{C_p}A)^{C_p}
\to \Omega^\infty(\Sigma^\infty_+ N_e^{C_p} A)^{C_p}
\simeq \Omega^\infty (N_e^{C_p}\Sigma^\infty_+ A)^{C_p},
\end{align*}
whose composite with the geometric fixed point map
\[
\Omega^\infty\Phi^{C_p}\colon \Omega^\infty (N_e^{C_p}\Sigma^\infty_+  A)^{C_p} \to \Omega^\infty \Phi^{C_p}N_e^{C_p}\Sigma^\infty_+ A \simeq \Omega^\infty \Sigma^\infty_+ A
\]
is the identity, and whose composite with the restriction
\[
\Omega^\infty\res^{C_p}_e\colon \Omega^\infty (N_e^{C_p}\Sigma^\infty_+  A)^{C_p} \to \Omega^\infty (\Sigma^\infty_+ A)^{\otimes p}\simeq \Omega^\infty\Sigma^\infty_+(A^{\times p})
\]
is the diagonal $\Omega^\infty\Sigma^\infty_+\Delta_A$.

Now consider the cofiber sequence
\[
\bfE C_{p+} \to \bbone_{B,C_p} \to \widetilde{\bfE}C_p
\]
in $\SH^{C_p}(B)$, considered in detail in \cite{gepnerheller2023tom}. Given $X \in \SH^{C_p}(B)$, we may identify
\begin{center}\begin{tikzcd}
(\bfE C_{p+} \otimes X)^{C_p}\ar[r]\ar[d,equals]& X^{C_p}\ar[r]\ar[d,equals]&(\widetilde{\bfE}C_p \otimes X)^{C_p}\ar[d,equals]\\
X_{\h\mu_p}\ar[r]&X^{C_p}\ar[r]&\Phi^{C_p} X
\end{tikzcd}.\end{center}
Therefore, by the above discussion, for any motivic space $A$ we obtain a lift in
\begin{center}\begin{tikzcd}[column sep=huge, row sep=huge]
&\Omega^\infty (\Sigma^\infty_+ A)^{\otimes p}_{\h\mu_p}\ar[dr,"\Omega^\infty\tr"]\ar[d]\\
\Omega^\infty\Sigma^\infty_+ A\ar[r,"N_e^{C_p} - \Omega^\infty\Sigma^\infty_+ \rho_A"]\ar[dr,"0"]\ar[ur,dashed]&\Omega^\infty (N_e^{C_p}\Sigma^\infty_+ A)^{C_p}\ar[r,"\Omega^\infty\res^{C_p}_e"]\ar[d,"\Omega^\infty\Phi^{C_p}"]&\Omega^\infty(\Sigma^\infty_+ A)^{\otimes p}\\
&\Omega^\infty\Sigma^\infty_+ A
\end{tikzcd}\end{center}
for which the horizontal composite is exactly $\varphi_p$.
\end{proof}

\begin{cor}\label{cor:motivickphtpy}
The transfer induces a split surjection 
\[
\ul{\pi}_k \Sigma^\infty_+ B_{\et}C_p \to \ul{\pi}_k \bbone_B
\]
of stable homotopy sheaves for $k\geq 1$.
\end{cor}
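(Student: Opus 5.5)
We apply \cref{thm:motivickp} with $A = S^0$, so that $\Sigma^\infty_+ A = \bbone_B$ and $(\bbone_B^{\otimes p})_{\h\mu_p}\simeq \Sigma^\infty_+ B_{\et}C_p$. This gives a pointed map of motivic spaces
\[
H\colon \Omega^\infty \bbone_B \to \Omega^\infty \Sigma^\infty_+ B_{\et}C_p
\]
with $\Omega^\infty \tr\circ H \simeq \varphi_p$. The plan is to pass to homotopy sheaves. For $k\geq 1$ the stable homotopy sheaf $\ul{\pi}_k$ of a motivic spectrum $Y$ is the Nisnevich sheafification of $U\mapsto \pi_k\map(\Sigma^\infty_+U,Y)$. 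This agrees with the unstable homotopy sheaf $\ul{\pi}_k$ of the pointed motivic space $\Omega^\infty Y$, based at the basepoint. Applying $\ul{\pi}_k$ to the commuting triangle of pointed motivic spaces therefore gives
\[
\ul{\pi}_k(\tr)\circ \ul{\pi}_k(H) = \ul{\pi}_k(\varphi_p)\colon \ul{\pi}_k\bbone_B\to\ul{\pi}_k\bbone_B .
\]

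The main step is to show that $\ul{\pi}_k(\varphi_p)$ is an isomorphism for $k\geq 1$. The introduction already observes that $\Omega\varphi_p$ is an equivalence. To make this concrete, evaluate on a smooth $U$ and work in the grouplike $\bbE_\infty$-space $R=\Omega^\infty\bbone_B(U)$. Here $\varphi_p = P - \mathrm{id}$, where $P(x)=x^p$ is the $p$th power built from the multiplication map $R^{\times p}\to R$ precomposed with the diagonal.

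On $\pi_k$ for $k\geq 1$, based at the unit $1$, the multiplication map induces the sum of the $p$ coordinate maps; this is the Eckmann--Hilton argument, since the unit is the basepoint. Precomposing with the diagonal then sends $\alpha$ to $p\alpha$.

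One must also check the basepoint convention. The difference $P - \mathrm{id}$ is formed using the additive structure, and it moves the basepoint $1$ to $1-1=0$, so $\varphi_p$ is indeed a pointed map $(R,1)\to(R,0)$. Using the additive structure to identify homotopy groups at different components, $\pi_k(\varphi_p)$ is $p\cdot\mathrm{id} - \mathrm{id} = (p-1)\cdot\mathrm{id}$.

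This is not obviously invertible, so the computation needs care; this is the step I expect to be the obstacle. The correct statement should be that $\pi_k(P)$ at $1$ is zero, giving $\pi_k(\varphi_p) = -\mathrm{id}$, in accordance with the claim in the introduction that the composite ``induces $-1$ in positive-dimensional homotopy groups''. The reason is that $x\mapsto x^p$ is composed of multiplicative, not additive, operations. A class $\alpha\in\pi_k(R,1)$ corresponds to $1+\epsilon$ with $\epsilon$ in the augmentation ideal of $\pi_*$, for $k\geq1$. Then $(1+\epsilon)^p = 1 + p\epsilon + \binom p2\epsilon^2+\cdots$, where the products $\epsilon^i$ for $i\geq 2$ vanish on homotopy groups of a sphere: the product of two maps $S^k\to R$ through the multiplicative structure has trivial linear part. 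On $\pi_k$ this gives $p\alpha$ measured in the multiplicative $H$-structure.

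The multiplicative and additive $H$-structures on $R$ induce the same group structure on $\pi_k$ for $k\geq1$, again by Eckmann--Hilton. However, transporting between the components $1$ and $0$ involves translation by $-1$, and the identification of $\pi_k$ at the component $1$ with $\pi_k$ at $0$ must be made using additive translation consistently on both terms. I will settle this sign and unit bookkeeping carefully, as the paper's phrase ``induces $-1$'' suggests. Whatever the exact unit, the result is an automorphism of the sheaf $\ul{\pi}_k\bbone_B$. If the computation yields $(p-1)$ or $-1$, either is fine after we note that, as in the classical argument, $P$ vanishes on $\pi_{k\geq1}$. This holds because $P$ factors through $\Omega^\infty$ of the composite $Y\to Y^{\otimes p}$ only via the multiplicative map, which is null on the linear part.

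Given that $\ul{\pi}_k(\varphi_p)=\theta$ is an isomorphism, the map $\ul{\pi}_k(H)\circ\theta^{-1}$ is a section of $\ul{\pi}_k(\tr)$. Hence $\ul{\pi}_k(\tr)$ is a split surjection for all $k\geq 1$, as claimed.
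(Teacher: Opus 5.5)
Your overall route matches the paper's: apply \cref{thm:motivickp} with $A=S^0$ and use that $\varphi_p$ is invertible on $\ul{\pi}_k$ for $k\geq 1$. The paper's entire proof is the remark that $\varphi_p\colon\Omega^\infty\bbone_B\to\Omega^\infty\bbone_B$ is an equivalence after looping once. The gap is in the step you flag, and what you write there is wrong. You base $R=\Omega^\infty\bbone_B(U)$ at the unit $1$. At that basepoint your expansion $(1+\epsilon)^p=1+p\epsilon+\cdots$ is actually correct. So $P$ induces multiplication by $p$ on $\pi_k(R,1)$, and $\varphi_p$ induces $p-1$, the ``derivative'' $px^{p-1}-1$ at $x=1$. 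Your later assertion that $\pi_k(P)$ vanishes at $1$ contradicts this. Moreover $p-1$ is \emph{not} fine: for odd $p$ it is even, while $\ul{\pi}_k\bbone_B$ has $2$-torsion in general (already classically $\pi_1\bbS=\bbZ/2$, with $p=3$). So working at $1$ gives a splitting only after inverting $p-1$. Your closing appeal to ``$P$ vanishes on $\pi_{k\geq 1}$'' is the right fact, but it is false at the basepoint you chose, and you give no actual argument for it.

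The missing point is that the relevant basepoint is $0$. The object $\Omega^\infty Y$ is pointed at $0$, and $\varphi_p(0)=0$. The lift $H$ of \cref{thm:motivickp} is a lift of pointed motivic spaces for this basepoint, and $\ul{\pi}_kY$ is the homotopy sheaf of $\Omega^\infty Y$ at $0$. At $0$, $P$ really does vanish on $\pi_k$ for $k\geq 1$. The lax monoidal map $(\Omega^\infty\bbone_B)^{\times p}\to\Omega^\infty(\bbone_B^{\otimes p})$ is adjoint to a tensor of counits, and each counit kills $\Sigma^\infty_+$ of the basepoint $0$. Hence this map factors through the smash power of $\Omega^\infty\bbone_B$ pointed at $0$. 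Consequently, for $\alpha\colon S^k\to R$ the composite $P\circ\alpha$ factors through the reduced diagonal $S^k\to (S^k)^{\wedge p}$, which is null for $k\geq 1$. Since subtraction is additive on $\pi_k$, this gives $\ul{\pi}_k(\varphi_p)=0-\mathrm{id}=-\mathrm{id}$. Then $-\ul{\pi}_k(H)$ is a section of $\ul{\pi}_k(\tr)$, which is exactly the sense in which $\Omega\varphi_p$ is an equivalence.
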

\begin{proof}
This follows as $\varphi_p\colon \Omega^\infty\bbone_B \to \Omega^\infty\bbone_B$ is an equivalence after looping once.
\end{proof}

\section{The synthetic Kahn--Priddy theorem}

\subsection{Preliminaries}

Given an Adams-type ring spectrum $E$, write $\Syn_E$ for the category of $E$-synthetic spectra \cite{pstragowski2023synthetic} and
\[
\nu_E\colon \Sp \to \Syn_E
\]
for the synthetic analogue functor. Set
\[
\Sigma^{s,w}\bbone_E =  \Sigma^{s-w}\nu_E(\Sigma^w \bbS),
\]
and write
\[
\tau \in \pi_{0,-1}\bbone_E,\qquad \tau\colon \Sigma \nu_E(\bbS) \to \nu_E(\Sigma\bbS)
\]
for the deformation parameter.

\begin{defn}\label{def:syntheticloops}
For $X \in \Syn_E$, we write
\[
\Omega^\infty_{w,m} X  = \begin{cases}\map_{\Syn_E}(\Sigma^{0,w}\bbone_E,X \otimes C(\tau^m))&1 \leq m < \infty,\\
\map_{\Syn_E}(\Sigma^{0,w}\bbone_E,X)&m = \infty
\end{cases}
\]
for $1\leq m \leq \infty$ and $0 \leq w < \infty$.
\tqed
\end{defn}

\begin{rmk}\label{rmk:fibers}
The spaces $\Omega^\infty_{\ast,\ast}(X)$ sit in natural fiber sequences
\[
\Omega^\infty_{w+m,r}X \to \Omega^\infty_{w,m+r}X \to \Omega^\infty_{w,m}X
\]
for all $1 \leq m < \infty$ and $0\leq w < \infty$ and $1 \leq r \leq \infty$.
\tqed
\end{rmk}

\begin{ex}
If $Y$ is any spectrum, then
\[
\Omega^\infty_{0,\infty}\Sigma^{0,f}\nu_E(Y)\simeq\Omega^\infty Y
\]
for all $f\geq 0$ \cite[Theorem 4.58]{pstragowski2023synthetic}.
\tqed
\end{ex}

For any $X\in \Syn_E$, functoriality of smash powers together with the ring structure on $C(\tau^m)$ provides compatible maps
\[
P\colon \Omega_{\ast,\ast}X \to \Omega_{\ast,\ast}X^{\otimes p}.
\]
Explicitly, in bidegree $(w,m)$ this is the composite
\begin{center}\begin{tikzcd}[column sep=-3cm]
\map_{\Syn_E}(\Sigma^{0,w}\bbone_E,X\otimes C(\tau^m))\ar[d,"(\bs)^{\otimes p}"]\\

\map_{\Syn_E}(\Sigma^{0,pw}\bbone_E,X^{\otimes p}\otimes C(\tau^m)^{\otimes p})\ar[d,"{\map_{\Syn_E}(\tau^{(p-1)w},X^{\otimes p}\otimes \mu)}"]\\

\map_{\Syn_E}(\Sigma^{0,w}\bbone_E,X^{\otimes p}\otimes C(\tau^m)),
\end{tikzcd}\end{center}
where for $m = \infty$ one should interpret $C(\tau^m)$ as $\bbone_E$.

\begin{defn}
Given a pointed space $A$ with reduced diagonal $\widetilde{\Delta}_A\colon A \to A^{\wedge p}$, we set
\[
\varphi_p = P - \Omega^\infty_{\ast,\ast}\nu_E(\Sigma^\infty\widetilde{\Delta}_A)\colon \Omega^\infty_{\ast,\ast}\nu_E(\Sigma^\infty A) \to \Omega^\infty_{\ast,\ast}\nu_E(\Sigma^\infty A^{\otimes p}).
\]
Here, we implicitly use the map $\nu_E(\Sigma^\infty A)^{\otimes p} \to \nu_E(\Sigma^\infty A^{\otimes p})$ provided by the lax monoidal structure of $\nu_E$.
\tqed
\end{defn}

\begin{rmk}
Taking $A = S^0$, the map
\[
\varphi_p\colon \Omega^\infty_{\ast,\ast}\bbone_E \to \Omega^\infty_{\ast,\ast}\bbone_E
\]
induces $-1$ on $\pi_k$ for $k > 0$, and is therefore an equivalence after looping once.
\tqed
\end{rmk}

We will not make full use of the category of $E$-synthetic spectra: we will work with explicit cosimplicial resolutions defining the Adams spectral sequence. Given a stable category $\dcat$ and $X,Y\in \dcat$, write
\[
\bfmap_\dcat(X,Y) = \bfmap(X,Y) \in \Sp
\]
for the spectrum of maps from $X$ to $Y$.

\begin{ex}
Taking $\dcat = \Sp^G$ for a group $G$, we have
\[
\bfmap_{\Sp^G}(\bbS_G,X) = X^G
\]
for $X \in \Sp^G$.
\tqed
\end{ex}

\begin{defn}
Given a monoidal stable category $\dcat$, algebra $E \in \Alg(\dcat)$, and $X \in \dcat$, we define
\[
\Bock^E_\star(\Sigma^{s,w}\nu(X)) = \Sigma^{s-w}\Tot\left(\tau_{<\star}\tau_{\geq 0}\bfmap_\dcat(\bbone_\dcat,E^{\otimes\bullet+1}\otimes \Sigma^w X)\right).\tag*{$\triangleleft$}
\]
\end{defn}

The connection to synthetic homotopy is via the following standard fact, compare the discussions in \cite[Section 4.6]{pstragowski2023synthetic} and \cite[Appendix A]{burklundhahnsenger2023boundaries}.

\begin{lemma}\label{lem:ebock}
Suppose that $E$ admits an $\bbA_\infty$ ring structure. For $Y \in \Sp$, we may identify
\[
\Bock^E_\star(\Sigma^{s,w}\nu(Y))\simeq \bfmap_{\Syn_E}(\bbone_E,\Sigma^{s,w}\nu_E(Y) \otimes C(\tau^\star))
\]
for all $s,w\in\integers$. In particular,
\[
\Omega^\infty \Bock_\star^E(\Sigma^{s,w}\nu(Y)) \simeq \Omega^\infty_{0,\star}\Sigma^{s,w}\nu_E(Y).
\]
\end{lemma}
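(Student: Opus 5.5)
We identify both sides as totalizations of the same cosimplicial object, built from the canonical $E$-Adams resolution. The key input is Pstr\k{a}gowski's description of synthetic spectra. For an Adams-type $E$ with an $\bbA_\infty$ structure, the object $\nu_E(Y)$ is the totalization of the cosimplicial synthetic spectrum $\nu_E(E^{\otimes\bullet+1}\otimes Y)$. This resolution is $\nu_E$-split because $\nu_E(E)\otimes\nu_E(E^{\otimes k}\otimes Y)\simeq\nu_E(E^{\otimes k+1}\otimes Y)$. Moreover, mapping out of $\bbone_E$ into $\nu_E(Z)$ for an $E$-module $Z$ (or $E$-injective $Z$) gives the connective cover: $\bfmap_{\Syn_E}(\bbone_E,\nu_E(Z))\simeq\tau_{\geq0}\bfmap(\bbS,Z)$, by \cite[Section 4.6]{pstragowski2023synthetic}. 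More generally, $\bfmap_{\Syn_E}(\bbone_E,\Sigma^{s,w}\nu_E(Z))\simeq\Sigma^{s-w}\tau_{\geq0}\bfmap(\bbS,\Sigma^wZ)$.

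The plan has three steps.

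\begin{enumerate}[label=(\arabic*)]
\item First reduce to $s=w$ and a fixed shift. Since $\Sigma^{s,w}\nu(Y)=\Sigma^{s-w}\nu(\Sigma^wY)$ and both sides commute with $\Sigma^{s-w}$, we may replace $Y$ by $\Sigma^wY$ and treat $(s,w)=(0,0)$.

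\item Next compute the effect of $C(\tau^m)$ on $\nu_E(Z)$ for $E$-injective $Z$. On such objects $\tau$ acts through the connective covers: $\Sigma^{-1}\nu(\Sigma Z)\to\nu(Z)$ corresponds to $\Sigma^{-1}\tau_{\geq0}\Sigma F\to\tau_{\geq0}F$, i.e.\ the inclusion $\tau_{\geq1}\to\tau_{\geq0}$ after desuspension. Iterating, $\tau^m$ corresponds to $\tau_{\geq m}F\to\tau_{\geq0}F$. Hence
\[
\bfmap_{\Syn_E}(\bbone_E,\nu_E(Z)\otimes C(\tau^m))\simeq\tau_{<m}\tau_{\geq0}\bfmap(\bbS,Z).
\]
For $m=\infty$ this is $\tau_{\geq0}$, matching the convention $C(\tau^\infty)=\bbone_E$.

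\item Finally, since $\bfmap(\bbone_E,-)$ and $-\otimes C(\tau^m)$ commute with totalizations (the latter because $C(\tau^m)$ is dualizable), we have $\nu_E(Y)\simeq\Tot\,\nu_E(E^{\otimes\bullet+1}\otimes Y)$. Applying step (2) termwise gives $\Bock^E_m(\nu(Y))$. Taking $\Omega^\infty$ gives the second statement by the definition of $\Omega^\infty_{0,\star}$.
\end{enumerate}

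Step (2) is the main obstacle, because the identification must be natural in the cosimplicial direction. That naturality is what makes $\tau$ correspond to the Whitehead-tower inclusion. I would get it from Pstr\k{a}gowski's identification of $\nu_E(Z)$, for $E$-injective $Z$ such as $E^{\otimes k}\otimes Y$, with the sheaf $\tau_{\geq0}$ of mapping spectra. That identification is functorial in $Z$, so the comparison is a natural equivalence of cosimplicial objects. I would cite \cite[Appendix A]{burklundhahnsenger2023boundaries} for the compatibility of $\tau$ with the connective-cover filtration.

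One also has to justify the totalization formula $\nu_E(Y)\simeq\Tot\,\nu_E(E^{\otimes\bullet+1}\otimes Y)$ for all $Y$, not just $E$-nilpotent complete ones. This holds because synthetic spectra are hypercomplete sheaves and the cobar resolution is a hypercover.
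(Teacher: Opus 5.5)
Your steps (1) and (2) and the identification $\nu_E(E)^{\otimes k}\otimes\nu_E(Y)\simeq\nu_E(E^{\otimes k}\otimes Y)$ are the right ingredients; they are what the paper uses (Pstragowski's Lemma 4.24 and Proposition 4.60). The gap is the convergence claim in step (3).

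\textbf{The claim is false.} The equivalence $\nu_E(Y)\simeq\Tot\,\nu_E(E^{\otimes\bullet+1}\otimes Y)$ does not hold for general $Y$. Apply $\bfmap_{\Syn_E}(\bbone_E,-)$, which preserves limits:
\begin{itemize}
\item on the right you get $\Tot\,\tau_{\geq0}(E^{\otimes\bullet+1}\otimes Y)$, which sees the $E$-nilpotent completion of $Y$;
\item on the left you get $\tau_{\geq0}Y$.
\end{itemize}
For $E=\bbF_p$ and $Y=\bbS$, the cobar terms are already connective, so the right side is $\bbS^\wedge_p$. The two sides then have $\pi_0$ equal to $\bbZ_p$ and $\bbZ$ respectively.

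\textbf{The justification does not apply.} Hypercompleteness is descent for hypercovers in the site variable, i.e.\ objects of $\Sp_E^{fp}$. The cobar construction is a resolution in the coefficient variable. Nothing about the sheaf condition makes the Adams tower converge.

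A symptom of the problem: taken literally, your argument also proves the $m=\infty$ case for every $Y$, namely $\tau_{\geq0}Y\simeq\Tot\,\tau_{\geq0}(E^{\otimes\bullet+1}\otimes Y)$. The same example refutes this.

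\textbf{The fix.} What you need is completeness after truncating in $\tau$. For $1\leq m<\infty$, the object $\Sigma^{s,w}\nu_E(Y)\otimes C(\tau^m)$ is $\nu_E(E)$-nilpotent complete \cite[Lemma A.12]{burklundhahnsenger2023boundaries}. So it is the totalization of $\nu_E(E)^{\otimes\bullet+1}\otimes\Sigma^{s,w}\nu_E(Y)\otimes C(\tau^m)$.

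Equivalently, using that $C(\tau^m)$ is dualizable as you note, the comparison map $\nu_E(Y)\to\Tot\,\nu_E(E^{\otimes\bullet+1}\otimes Y)$ becomes an equivalence after $-\otimes C(\tau^m)$, even though it is not one beforehand.

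With this in place of your hypercover claim, the rest of your argument goes through exactly as in the paper, but only for finite $\star=m$. That is all the paper needs: in the proof of Theorem~\ref{thm:newskp}, the case $m=\infty$ is handled directly by Theorem~\ref{thm:lkpg}.
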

\begin{proof}
As $\Sigma^{s,w}\nu_E(Y) \otimes C(\tau^m)$ is $\nu_E(E)$-nilpotent complete \cite[Lemma A.12]{burklundhahnsenger2023boundaries}, we have
\begin{align*}
\bfmap_{\Syn_E}(\bbone_E,\Sigma^{s,w}\nu_E(Y)\otimes C(\tau^n)) &\simeq \Tot\bfmap_{\Syn_E}(\bbone_E,\nu_E(E)^{\otimes\bullet+1}\otimes \Sigma^{s,w}\nu_E(Y)\otimes C(\tau^m))\\
&\simeq\Sigma^{s-w}\Tot\bfmap_{\Syn_E}(\bbone_E,\nu_E(E^{\otimes\bullet+1}\otimes\Sigma^w Y) \otimes C(\tau^m)),
\end{align*}
the second equivalence by applying \cite[Lemma 4.24]{pstragowski2023synthetic}.
So the lemma follows from the observation that
\[
\bfmap_{\Syn_E}(\bbone_E,\nu_E(Z)\otimes C(\tau^m))\simeq \tau_{<m}\tau_{\geq 0}Z
\]
for any $Z \in \Sp$ which admits an $E$-module structure \cite[Proposition 4.60]{pstragowski2023synthetic}.
\end{proof}

We are interested in the case $\dcat = \Sp^{C_p}$. Fix $E_{C_p} \in \Alg(\Sp^{C_p})$ and suppose that $C_p$ acts trivially on the underlying spectrum $E = \res^{C_p}_e E_{C_p}$. For any $X\in \Sp^{C_p}$, isotropy separation provides a diagram
\begin{center}\begin{tikzcd}[column sep=scriptsize]
E^{\otimes\bullet+1}\otimes X_{\h C_p}\ar[r]\ar[dr,"\tr"]&(E_{C_p}^{\otimes\bullet+1}\otimes X)^{C_p}\ar[d,"\res^{C_p}_e"]\ar[r,"\Phi^{C_p}"]& \Phi^{C_p} E^{\otimes\bullet+1} \otimes \Phi^{C_p} X\ar[r]&E^{\otimes\bullet+1}\otimes \Sigma X_{\h C_p}\\
&E^{\otimes\bullet+1} \otimes \res^{C_p}_e X
\end{tikzcd}\end{center}
of cosimplicial spectra in which the row is a fiber sequence. Applying $\Tot\tau_{<\star}\tau_{\geq 0}(\bs)$, this provides a diagram
\begin{center}\begin{tikzcd}[column sep=scriptsize]
\Bock_\star^E(\nu(X_{\h C_p})) \ar[r]\ar[dr]& \Bock_\star^{E_{C_p}}(\nu(X)) \ar[r,"\Phi^{C_p}"]\ar[d,"\res^{C_p}_e"]& \Bock_\star^{\Phi^{C_p} E}(\nu(\Phi^{C_p}X))\ar[r]&\Bock_\star^E(\Sigma^{1,1}\nu(X_{\h C_p})) \\
&\Bock_\star^E(\nu(X))
\end{tikzcd},\end{center}
only now the row is generally \emph{not} a fiber sequence, as $\tau_{<\star}\tau_{\geq 0}$ does not preserve all fiber sequences. Identifying the fiber of
\[
\Phi^{C_p}\colon \Bock_\star^{E_{C_p}}(\nu(X)) \to \Bock_\star^{\Phi^{C_p} E}(\nu(\Phi^{C_p}X))
\]
in more traditional terms for suitable $E_{C_p}$ and $X$ constitutes the ``Adams isomorphism'' step of the proof of our synthetic Kahn--Priddy theorems, and seems to be the main roadblock to further generalization.

\subsection{The \texorpdfstring{$\bbF_2$}{F\_2}-synthetic Kahn--Priddy theorem}

As the transfer
\[
\tr\colon \Sigma^n P^\infty_n \simeq (\Sigma^n\bbS)^{\otimes 2}_{\h C_2} \to (\Sigma^n \bbS)^{\otimes n} \simeq \Sigma^{2n}\bbS
\]
vanishes in mod $2$ homology, its synthetic analogue canonically factors through $\tau$ to provide a synthetic transfer
\[
\tr_{\bbF_2}\colon \Sigma^{0,1}\nu_{\bbF_2}(\Sigma^n P^\infty_n) \to \Sigma^{2n,2n}\bbone_{\bbF_2}.
\]
Explicitly, the fiber sequence
\begin{center}\begin{tikzcd}
\Sigma^{n-1}\bbS\ar[r]&\Sigma^n P^\infty_{n-1}\ar[r]&\Sigma^nP^\infty_n
\end{tikzcd}\end{center}
obtained from the fiber of the transfer is preserved by $\nu_{\bbF_2}$, and $\tr_{\bbF_2}$ is the associated boundary.

We need a good $C_2$-equivariant lift of $\bbF_2$. The simplest for our purposes is that which is based on the cofree $C_2$-spectrum 
\[
\bbF_2^h = F(EC_{2+},\infl_e^{C_2}\bbF_2)
\]
on $\bbF_2$ with trivial $C_2$-action. The key property that makes $\bbF_2^h$ ``good'' from our perspective is the following.

\begin{lemma}\label{lem:regulareuler}
Let $R$ be a global ring spectrum, and suppose that there exists an invertible class $u_\sigma \in \pi_{1-\sigma}^{C_2}R$. Then multiplication by the Euler class
\[
a_\sigma \colon \pi_\star^{C_2}R \to \pi_{\star-\sigma}^{C_2}R
\]
is an injection on the $RO(C_2)$-graded homotopy groups of the underlying $C_2$-spectrum of $R$.
\end{lemma}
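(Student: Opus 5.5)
The plan is to use the cofiber sequence $C_{2+}\to S^0\xrightarrow{a_\sigma} S^\sigma$ to rewrite the kernel of $a_\sigma$ as the image of a transfer, and then to show that this transfer vanishes. Smashing with $R$ and taking $\pi^{C_2}_\star$ gives, for every $V\in RO(C_2)$, an exact sequence
\[
\pi^e_{|V|}R \xrightarrow{\ \tr\ } \pi^{C_2}_V R \xrightarrow{\ a_\sigma\ } \pi^{C_2}_{V-\sigma}R .
\]
So injectivity of $a_\sigma$ in degree $V$ is equivalent to the vanishing of the transfer $\pi^e_{|V|}R\to\pi^{C_2}_VR$, with the appropriate twist of the underlying grading.

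The first step is to remove the $\sigma$-part of the grading with the invertible class $u_\sigma$. I would write $V=m+k\sigma$. Multiplication by $u_\sigma^k$ is an isomorphism $\pi^{C_2}_{m+k}R\cong\pi^{C_2}_{V}R$, and since $a_\sigma$ is central it commutes with this isomorphism. The transfer is $\pi^{C_2}_\star R$-linear by Frobenius reciprocity, $y\cdot\tr(x)=\tr(\res(y)x)$, and $\res(u_\sigma)$ is a unit in $\pi^e_0R$. Hence the image of the transfer in degree $V$ is $u_\sigma^k$ times the image in integer degree $m+k$. This reduces the lemma to showing that the transfer $\tr\colon\pi^e_nR\to\pi^{C_2}_nR$ in integer degrees is zero. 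If the twisted form turns out to be more convenient, the same manipulation instead reduces to the degree $n-1+\sigma$ transfer, and I would use whichever parity makes the next step work.

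The second step uses that $R$ is global, so restriction along $C_2\to e$ gives an inflation map $\infl\colon\pi^e_nR\to\pi^{C_2}_nR$ that is a section of $\res$. For $x\in\pi^e_nR$, Frobenius reciprocity gives
\[
\tr(x)=\tr(\res(\infl x))=\infl(x)\cdot\tr(1).
\]
Everything therefore reduces to a single element: $\tr(1)\in\pi^{C_2}_0R$, or its twisted analogue in $\pi^{C_2}_{\sigma-1}R$, must vanish.

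The third step, which I expect to be the main obstacle, is to prove this vanishing from the invertibility of $u_\sigma$. Since $a_\sigma\cdot\tr(1)=0$ and $u_\sigma$ is a unit, it would suffice to know that $\tr(1)$ is divisible by $a_\sigma$, since then multiplicativity and a torsion-freeness argument would kill it. I would first try to compute $\tr(1)$ in $RO(C_2)$-graded terms, using the standard identification of the transfer with a multiple of $a_\sigma$ times a class in degree $1-\sigma$ coming from the boundary $S^{\sigma-1}\to C_{2+}$. I would then compare this with $u_\sigma$ via its restriction, which forces the relevant twisted class to be $a_\sigma$-divisible and so exhibits $\tr(1)$ as a multiple of $a_\sigma u_\sigma^{-1}$ times something annihilated by $a_\sigma$. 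If this direct computation does not close, the fallback is to use the global structure more seriously. Inflation identifies $R_{C_2}$ with the restriction of a global object, and an invertible $u_\sigma$ means $R_{C_2}\otimes S^{\sigma}\simeq R_{C_2}\otimes S^1$, i.e.\ the sign representation is $R$-orientable. The transfer can then be recomputed through this Thom isomorphism, where it is a twisted transfer; for the global, inflated object this should be the image of the identity, which I would show is zero.
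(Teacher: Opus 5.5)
\textbf{There is a genuine gap.} Your Steps 1 and 2 are sound: $u_\sigma$-linearity of the transfer and the formula $\tr(x)=\infl(x)\cdot\tr(1)$ correctly reduce everything to a single class. But Step 3 is where all the content lies, and as written it is not a proof.

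In the untwisted parity you need $\tr(1)=0$ in $\pi^{C_2}_0R$. This class is the unit image of $[C_2]\in A(C_2)=\pi^{C_2}_0\bbS$, which is nonzero in the sphere, so its vanishing in $R$ must genuinely use both $u_\sigma$ and inflation. Your proposed routes do not supply this.
\begin{itemize}
\item The divisibility route is circular. If $\tr(1)=a_\sigma z$, then $a_\sigma\tr(1)=0$ only gives $a_\sigma^2z=0$, and passing to $a_\sigma z=0$ is exactly the $a_\sigma$-torsion-freeness you are trying to prove.
\item The ``standard identification'' of the transfer with an $a_\sigma$-multiple cannot hold universally. In the sphere, the image of $a_\sigma\colon\pi^{C_2}_\sigma\bbS\to A(C_2)$ is the kernel of restriction, spanned by $[C_2]-2$, so $[C_2]$ is not $a_\sigma$-divisible.
\item The Thom-isomorphism fallback amounts to switching parity, but you never say why the resulting class vanishes.
\end{itemize}

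The missing idea is to use the long exact sequence one step further to the left. The map preceding $\tr$ is the restriction $\res\colon\pi^{C_2}_{V+1-\sigma}R\to\pi^e_{|V|}R$, so $\tr$ kills its image. Equivalently, your twisted class $\tr(1)\in\pi^{C_2}_{\sigma-1}\bbS$ is the composite $S^{\sigma-1}\to C_{2+}\to S^0$ of two consecutive maps in a cofiber sequence, hence is zero already in the sphere. So the parity you set aside is the one that works:
\begin{itemize}
\item For $V=n-1+\sigma$, the preceding map is $\res\colon\pi^{C_2}_nR\to\pi^e_nR$.
\item This map is surjective because $\res\circ\infl=\id$, so $\tr$ vanishes in that degree. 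Your Frobenius formula gives the same conclusion.
\item Your $u_\sigma$-shifting from Step 1 then transports this vanishing to every $V$.
\end{itemize}
Alternatively, you can finish directly in degree $0$. The element $1=\res\bigl(u_\sigma\cdot\infl(\res(u_\sigma)^{-1})\bigr)$ lies in the image of $\res$ from $\pi^{C_2}_{1-\sigma}R$, so $\tr(1)=0$ by exactness.

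This is precisely the paper's argument: $\tr$ vanishes iff the preceding restriction is surjective, $u_\sigma$ reduces that surjectivity to integer degrees, and there inflation provides a section.
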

\begin{proof}
By the cofiber sequence
\[
C_{2+} \to \bbS_{C_2} \to \Sigma^\sigma \bbS_{C_2},
\]
multiplication by the Euler class sits in a long exact sequence
\begin{center}\begin{tikzcd}
\cdots\ar[r]&\pi_{\star+1-\sigma}^{C_2}\ar[r,"\res^{C_2}_e"]&\pi_{|\star|}^eR\ar[r,"\tr^{C_2}_e"]&\pi_\star^{C_2}R\ar[r,"a_\sigma"]&\pi_{\star-\sigma}^{C_2}R\ar[r]&\cdots
\end{tikzcd}.\end{center}
To show that $a_\sigma$ is injective, it suffices to show that $\tr^{C_2}_e$ vanishes, for which it suffices to show that $\res^{C_2}_e$ is surjective. Using the invertible class $u_\sigma \in \pi_{1-\sigma}^{C_2}R$, it suffices to show that the restriction
\[
\res^{C_2}_e\colon \pi_\ast^{C_2}R \to \pi_\ast^e R
\]
is a surjection in integer degrees, which holds as the global structure of $R$ provides an inflation
\[
\infl_e^{C_2}\colon \pi_\ast^e R \to \pi_\ast^{C_2}R
\]
satisfying $\res^{C_2}_e\circ \infl_e^{C_2} = \id$.
\end{proof}

Abbreviate
\[
\bbF_2^t = \Phi^{C_2} \bbF_2^h.
\]
The unique ring map
\[
\bbF_2 \to \bbF_2^t
\]
induces an equivalence of Adams spectral sequences, and so we may identify
\[
\Bock_\star^{\bbF_2}(\Sigma^{s,w}\nu(X))\simeq \Bock_\star^{\bbF_2^t}(\Sigma^{s,w}\nu(X))
\]
for any spectrum $X$.

\begin{lemma}\label{lem:fibf2}
The sequence
\begin{center}\begin{tikzcd}
\Bock_\star^{\bbF_2^h}(\nu(\Sigma^{n(1+\sigma)}\bbS_{C_2})) \ar[r,"\Phi^{C_2}"]& \Bock_\star^{\bbF_2}(\nu(\Sigma^n\bbS)) \ar[r]& \Bock_\star^{\bbF_2}(\Sigma^{1,1}\nu(\Sigma^n P^\infty_n))
\end{tikzcd}\end{center}
is a fiber sequence for all $n\in\integers$.
\end{lemma}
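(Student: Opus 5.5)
The plan is to obtain the claimed fiber sequence from the levelwise isotropy separation sequence, and to check that the truncations $\tau_{<m}\tau_{\geq 0}$ do not destroy it. The input that makes this work is a homotopy-group injectivity coming from \cref{lem:regulareuler}.

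\emph{Step 1 (levelwise sequence).} Take $E_{C_2}=\bbF_2^h$ and $X=\Sigma^{n(1+\sigma)}\bbS_{C_2}$. Then $\Phi^{C_2}X\simeq\Sigma^n\bbS$, and $X_{\h C_2}$ is the Thom spectrum of $n+n\sigma$ over $BC_2$, namely $\Sigma^nP^\infty_n$. The isotropy separation diagram recorded before this subsection gives, in each cosimplicial degree $k$, a fiber sequence
\[
(\bbF_2^{h,\otimes k+1}\otimes X)^{C_2}\xrightarrow{\Phi^{C_2}} (\bbF_2^t)^{\otimes k+1}\otimes\Sigma^n\bbS \to \bbF_2^{\otimes k+1}\otimes\Sigma\Sigma^nP^\infty_n .
\]
Here I use that $\bbF_2^h$ has underlying spectrum $\bbF_2$ with trivial action, and that $\Phi^{C_2}$ is symmetric monoidal, so $\Phi^{C_2}(\bbF_2^{h,\otimes k+1})\simeq(\bbF_2^t)^{\otimes k+1}$.

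Applying $\Tot\tau_{<m}\tau_{\geq0}$ produces the three terms of the lemma:
\begin{itemize}
\item the first term is $\Bock^{\bbF_2^h}_\star(\nu(X))$;
\item the second is $\Bock^{\bbF_2^t}_\star(\nu(\Sigma^n\bbS))$, which the identification stated just before the lemma rewrites as $\Bock^{\bbF_2}_\star(\nu(\Sigma^n\bbS))$;
\item the third is $\Bock^{\bbF_2}_\star(\Sigma^{1,1}\nu(\Sigma^nP^\infty_n))$, by the definition of $\Bock$ with $s=w=1$.
\end{itemize}
Since $\Tot$ preserves fiber sequences, it suffices to show that $\tau_{<m}\tau_{\geq0}$ preserves each levelwise fiber sequence $B\to C\to D$.

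\emph{Step 2 (reduction to injectivity).} For a fiber sequence $B\to C\to D$, the functor $\tau_{<m}\tau_{\ge0}$ sends it to a fiber sequence if $\pi_*C\to\pi_*D$ is surjective in all degrees. Equivalently, the connecting maps vanish and $\pi_*B\to\pi_*C$ is injective. So I must show that
\[
\Phi^{C_2}\colon \pi_*\bigl(R\otimes \Sigma^{n(1+\sigma)}\bbS_{C_2}\bigr)^{C_2}\to \pi_*\Phi^{C_2}\bigl(R\otimes\Sigma^{n(1+\sigma)}\bbS_{C_2}\bigr)
\]
is injective, where $R=\bbF_2^{h,\otimes k+1}$. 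The source is the $RO(C_2)$-graded group $\pi^{C_2}_{*-n(1+\sigma)}R$. Geometric fixed points are $a_\sigma$-localization, $\Phi^{C_2}M\simeq(\widetilde EC_2\otimes M)^{C_2}$ with $\widetilde EC_2=\colim_j S^{j\sigma}$. Hence this map is the canonical map $\pi^{C_2}_\star R\to\pi^{C_2}_\star R[a_\sigma^{-1}]$, and it is injective as soon as $a_\sigma$ acts injectively.

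\emph{Step 3 (verifying \cref{lem:regulareuler}).} This is the step I expect to be the main obstacle: I must check that $R$ satisfies the hypotheses of \cref{lem:regulareuler}.
\begin{itemize}
\item \emph{Global ring structure.} $R$ is the Borel-complete $C_2$-spectrum underlying $\bbF_2^{\otimes k+1}$ with trivial action. Borel completions of spectra with trivial action form a global ring, so inflation $\infl_e^{C_2}$ exists with $\res^{C_2}_e\circ\infl_e^{C_2}=\id$. Concretely, inflation comes from pulling back along $BC_2\to\ast$ in $F(BC_{2+},-)$.
\item \emph{Invertible $u_\sigma$.} $R$ is a module over $\bbF_2^h$, so it suffices to produce $u_\sigma$ for $\bbF_2^h$. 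There it is the mod $2$ Thom class of $\sigma$: the Thom isomorphism for the sign representation, which holds with $\bbF_2$ coefficients, gives $S^{\sigma}\otimes\bbF_2^h\simeq S^1\otimes\bbF_2^h$.
\end{itemize}
If the global structure is awkward to phrase, I would instead verify the conclusion of \cref{lem:regulareuler} directly. That is, I would show that $\res^{C_2}_e$ is surjective in integer degrees, using the splitting $R^{C_2}=F(BC_{2+},R)\to R$ induced by $BC_2\to\ast$. This gives injectivity of $a_\sigma$ on all $RO(C_2)$-degrees, hence Step 2, and the lemma follows by totalization.
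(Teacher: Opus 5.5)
Your argument is the paper's proof, in more detail. You apply $\Tot\tau_{<\star}\tau_{\geq0}$ to the levelwise isotropy separation sequence and reduce to short exactness on homotopy groups, i.e.\ injectivity of $\Phi^{C_2}$. You then identify $\Phi^{C_2}$ with inverting $a_\sigma$ and get regularity of $a_\sigma$ on all tensor powers of $\bbF_2^h$ from \cref{lem:regulareuler}.

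One justification in Step 3 is wrong, although the conclusion you need still holds. For $k\geq 1$ the tensor power $R=(\bbF_2^h)^{\otimes k+1}$ in $\Sp^{C_2}$ is \emph{not} Borel complete. Its geometric fixed points are $(\bbF_2^t)^{\otimes k+1}$, as you use in Step 1. But the comparison map $\bbF_2^{tC_2}\otimes\bbF_2^{tC_2}\to(\bbF_2\otimes\bbF_2)^{tC_2}$ is not an equivalence: on $\pi_0$ the source is a countable direct sum, while the target is the uncountable product $\prod_{b\geq 0}\calA^\vee_b$. So $R^{C_2}\not\simeq F(BC_{2+},\bbF_2^{\otimes k+1})$. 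Consequently neither your appeal to Borel completions being global nor your fallback splitting via $BC_2\to\ast$ applies to $R$.

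The fix is short.
\begin{itemize}
\item \emph{Inflation.} Since $\infl_e^{C_2}$ is symmetric monoidal, the completion map $\infl_e^{C_2}\bbF_2\to\bbF_2^h$ induces a ring map $\infl_e^{C_2}(\bbF_2^{\otimes k+1})\simeq(\infl_e^{C_2}\bbF_2)^{\otimes k+1}\to R$, which is an underlying equivalence. Precompose with the unit $\bbF_2^{\otimes k+1}\to(\infl_e^{C_2}\bbF_2^{\otimes k+1})^{C_2}$. This gives the required inflation with $\res^{C_2}_e\circ\infl_e^{C_2}=\id$ on integer-graded homotopy. Equivalently, tensor powers of the global ring $\bbF_2^h$ are again global, which is what the paper implicitly uses.
\item \emph{The class $u_\sigma$.} Your construction is fine as stated, since it only uses the Thom class for $\bbF_2^h$ itself (which is Borel complete) and the $\bbF_2^h$-module structure on $R$.
\end{itemize}
With this repair the proof is complete.
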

\begin{proof}
This sequence is obtained by applying $\Tot \tau_{<\star}\tau_{\geq 0}(\bs)$ to the fiber sequence
\begin{center}\begin{tikzcd}
\left((\bbF_2^h)^{\otimes \bullet+1} \otimes \Sigma^{n(1+\sigma)}\bbS_{C_2}\right)^{C_2}\ar[r,"\Phi^{C_2}"]&( \bbF_2^t)^{\otimes\bullet+1} \otimes \Sigma^n\bbS\ar[r]&\bbF_2^{\otimes\bullet+1}\otimes \Sigma^{n+1}P^\infty_n
\end{tikzcd}.\end{center}
It therefore suffices to show that this fiber sequence induces a short exact sequence on all homotopy groups. As
\[
\Phi^{C_2}\colon \pi_\star (\bbF_2^h)^{\otimes \bullet+1} \to \pi_{\star^{C_2}}(\bbF_2^t)^{\otimes\bullet+1}
\]
is obtained by inverting the Euler class $a_\sigma \in \pi_{-\sigma}\bbS_{C_2}$ of the real sign representation, it suffices to show that this Euler class is regular on the $RO(C_2)$-graded homotopy groups of all tensor powers of $\bbF_2^h$. This follows from \cref{lem:regulareuler}.
\end{proof}

Extending this fiber sequence to the left, we find that the map of towers associated to the synthetic transfer sits in a diagram
\begin{equation}\label{eq:f2adamsiso}\begin{tikzcd}
\Bock_\star^{\bbF_2}(\Sigma^{0,1}\nu(\Sigma^n P^\infty_n)) \ar[r]\ar[dr,"\tr_{\bbF_2}"']& \Bock_\star^{\bbF_2^h}(\nu(\Sigma^{n(1+\sigma)}\bbS_{C_2})) \ar[r,"\Phi^{C_2}"]\ar[d,"\res^{C_2}_e"]& \Bock_\star^{\bbF_2}(\Sigma^n\bbS)\\
&\Bock_\star^{\bbF_2}(\Sigma^{2n}\bbS)
\end{tikzcd}\end{equation}
in which the top row is a fiber sequence. We can now prove the following.

\begin{theorem}\label{thm:newskp}
There is a preferred lift in the diagram
\begin{center}\begin{tikzcd}
&\Omega^\infty_{\ast,\ast} \Sigma^{0,1}\nu(\Sigma^n P^\infty_n)\ar[d,"\Omega^\infty_{\ast,\ast}\tr_{\bbF_2}"]\\
\Omega^\infty_{\ast,\ast}\Sigma^{n,n}\bbone_{\bbF_2}\ar[r,"\varphi_2"]\ar[ur,dashed]&\Omega^\infty_{\ast,\ast} \Sigma^{2n,2n}\bbone_{\bbF_2}
\end{tikzcd}\end{center}
for $n\geq 0$, which for $(\ast,\ast) = (0,\infty)$ agrees with that in \cref{thm:lkpg} for $A=S^n$ and $L$ the identity.
\end{theorem}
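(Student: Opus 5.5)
We follow the recipe of \cref{thm:lkpg}, now working levelwise in the cosimplicial $\bbF_2^h$-based Adams resolutions and using \cref{eq:f2adamsiso} as the synthetic Adams isomorphism. The first step is to identify the spaces involved via \cref{lem:ebock}. On the right-hand side, $\Omega^\infty_{w,m}\Sigma^{n,n}\bbone_{\bbF_2}$ is $\Omega^\infty\Bock_m^{\bbF_2}(\Sigma^{n,n-w}\nu(\bbS))$, suitably reindexed. The middle object $\Bock^{\bbF_2^h}_m(\nu(\Sigma^{n(1+\sigma)}\bbS_{C_2}))$, with the corresponding weight shifts, plays the role of the $C_2$-fixed points of the norm. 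So it suffices to construct two pointed maps
\[
N,\ \rho\colon \Omega^\infty_{\ast,\ast}\Sigma^{n,n}\bbone_{\bbF_2}\to \Omega^\infty\Bock_\ast^{\bbF_2^h}(\nu(\Sigma^{n(1+\sigma)}\bbS_{C_2}))
\]
(with weights shifted appropriately). These maps should be compatible in $(w,m)$, should agree after composing with $\Phi^{C_2}$, and should restrict along $\res^{C_2}_e$ to $P$ and to $\Omega^\infty_{\ast,\ast}\nu(\Sigma^\infty\widetilde\Delta)$ respectively. Given this, the fiber sequence in the top row of \cref{eq:f2adamsiso}, which comes from \cref{lem:fibf2}, produces a lift of $N-\rho$ to $\Omega^\infty_{\ast,\ast}\Sigma^{0,1}\nu(\Sigma^nP^\infty_n)$. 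The triangle in \cref{eq:f2adamsiso} then shows that the composite with $\tr_{\bbF_2}$ is $P-\Omega^\infty\nu(\widetilde\Delta)=\varphi_2$.

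To construct $\rho$, we use the equivariant diagonal $\infl_e^{C_2}S^n\to N_e^{C_2}S^n=S^{n(1+\sigma)}$. Smashing with the cosimplicial object $(\bbF_2^h)^{\otimes\bullet+1}$, using $\res\infl=\id$ and the ring map $\infl_e^{C_2}\bbF_2\to\bbF_2^h$, gives a map of cosimplicial spectra
\[
\bbF_2^{\otimes\bullet+1}\otimes\Sigma^n\bbS\to\bigl((\bbF_2^h)^{\otimes\bullet+1}\otimes\Sigma^{n(1+\sigma)}\bbS_{C_2}\bigr)^{C_2}.
\]
Its geometric fixed points are the identity, via $\bbF_2\to\bbF_2^t$, and its restriction is the diagonal. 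We then apply $\Tot\tau_{<\star}\tau_{\geq0}$. This map is linear, so its interaction with $\tau$-truncations is automatic.

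To construct $N$, we use the multiplicative norm levelwise. For a ring $E$ there are maps $N_e^{C_2}(E^{\otimes k})\to (\bbF_2^h)^{\otimes k}$, adjoint to the map given by multiplication, $E^{\otimes k}\otimes E^{\otimes k}\to E^{\otimes k}$, together with the cofree property of $\bbF_2^h$. These assemble into a map of cosimplicial spaces
\[
\Omega^\infty(\bbF_2^{\otimes\bullet+1}\otimes\Sigma^n\bbS)\to\Omega^\infty\bigl((\bbF_2^h)^{\otimes\bullet+1}\otimes\Sigma^{n(1+\sigma)}\bbS_{C_2}\bigr)^{C_2}
\]
whose restriction is the $p$th power map $P$ and whose geometric fixed points are the Tate-diagonal composite $\bbF_2\to\bbF_2^t$, which agrees with the map used for $\rho$.

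\textbf{The main obstacle} is compatibility of $N$ with the Postnikov truncations $\tau_{<m}\tau_{\ge0}$ and with weights. The norm is not additive, and on homotopy it doubles degree. It sends a class in $\pi_k$ to one in $\pi_{k(1+\sigma)}$, so the filtration must be measured so that the norm of something in $\tau_{\geq j}$ lands in the correct $\tau_{\ge j}$ in $\bbZ$-grading after using $u_\sigma$. The weight shifts, namely the $\tau^{(p-1)w}$ appearing in the definition of $P$, must match. I would handle this by working with the slice-type filtration on the $C_2$-spectrum built from the invertible class $u_\sigma$ of \cref{lem:regulareuler}, and by checking that $N$ is a map of filtered spaces, since norms are multiplicative for Postnikov-type filtrations. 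Finally, for $(\ast,\ast)=(0,\infty)$, we have $\Tot$ of the untruncated resolutions, which recovers $\bbS$, $(N_e^{C_2}\Sigma^n\bbS)^{C_2}$ and the isotropy sequence. The construction then reduces term-by-term to the one in \cref{thm:lkpg}, which gives the claimed agreement.
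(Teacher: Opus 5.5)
Your architecture matches the paper's: levelwise norm and $a_\sigma^n\circ\infl$ maps into the $\bbF_2^h$-resolution of $S^{n(1+\sigma)}$, equalized by $\Phi^{C_2}$ because the Tate Frobenius and the unit $\bbF_2\to\bbF_2^t$ agree, then lifted through the top row of \cref{eq:f2adamsiso}. However, the step you single out as the main obstacle is not an obstacle, and your proposed fix leads off course.

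\textbf{Truncation is automatic.} Since $\Omega^\infty$ preserves limits, and $\Omega^\infty\tau_{<m}\tau_{\geq 0}Z\simeq\tau_{<m}\Omega^\infty Z$ is the space-level Postnikov truncation, one has
\[
\Omega^\infty\Bock^{E}_m(\nu(X))\simeq\Tot\,\tau_{<m}\Omega^\infty(E^{\otimes\bullet+1}\otimes X),
\]
and likewise for the fixed-point resolutions. Truncation of spaces is functorial for arbitrary, non-additive maps. So you simply apply $\Tot\tau_{<m}$ to the map of cosimplicial spaces $N$, which is what the paper does. The degree doubling $\pi_k\to\pi_{k(1+\sigma)}$ that worries you is the norm on homotopy classes. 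What matters here is the map of spaces $\Omega^\infty Y\to\Omega^\infty(N_e^{C_2}Y)^{C_2}$, which, like any map of spaces, sends $\pi_k$ to $\pi_k$ (here the integer-graded $\pi_k^{C_2}$). Replacing the Postnikov filtration of the fixed points by a slice-type filtration built from $u_\sigma$ would also produce a map into a differently filtered tower, not into $\Bock_m^{\bbF_2^h}(\nu(\Sigma^{n(1+\sigma)}\bbS_{C_2}))$. Then \cref{lem:fibf2} and the triangle with $\tr_{\bbF_2}$ would not apply without a further comparison.

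\textbf{Weights.} Your constructions of $N$ and $\rho$ are given only in weight $0$, and ``weights shifted appropriately'' is not an argument. Matching the $\tau^{(p-1)w}$ in $P$ by hand is unnecessary. The paper first reduces to $(\ast,\ast)=(0,m)$ using \cref{rmk:fibers}: since $\Omega^\infty_{m,r}$ is the fiber of $\Omega^\infty_{0,m+r}\to\Omega^\infty_{0,m}$, lifts for $(0,m)$ with $1\le m\le\infty$ that are compatible in $m$ induce lifts in every bidegree.

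\textbf{The case $m=\infty$.} Your treatment here is also off. $\Tot$ of the untruncated $\bbF_2$-resolutions gives $2$-completions, not $\Omega^\infty\Sigma^n\bbS=\Omega^\infty_{0,\infty}\Sigma^{n,n}\bbone_{\bbF_2}$. The paper instead takes the lift of \cref{thm:lkpg} itself at $m=\infty$, and builds the finite-$m$ lifts compatibly with it.

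\textbf{The levelwise norm map.} Cofreeness alone gives maps $N_e^{C_2}(\bbF_2^{\otimes k})\to F(EC_{2+},\infl_e^{C_2}\bbF_2^{\otimes k})$, not maps into $(\bbF_2^h)^{\otimes k}$. Use instead that $\bbF_2^h$ is normed, i.e.\ the map $N_e^{C_2}\bbF_2\to\bbF_2^h$, together with $N_e^{C_2}(\bbF_2^{\otimes k})\simeq(N_e^{C_2}\bbF_2)^{\otimes k}$.
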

\begin{proof}
By the fiber sequences
\[
\Omega^\infty_{m,r} \to \Omega^\infty_{0,m+r} \to \Omega^{\infty}_{0,m}
\]
of \cref{rmk:fibers}, it suffices to consider just $(\ast,\ast) = (0,m)$ for $1 \leq m \leq \infty$. When $m = \infty$ we may simply take the lift in \cref{thm:lkpg} to be our lift here, and so our job is to construct lifts for $m < \infty$ which are compatible with this.

Observe that $\bbF_2^h$, being the cofree $C_2$-spectrum on an $\bbE_\infty$ ring, is a normed $C_2$-ring spectrum. Combined with functoriality of the norm, this provides a map
\[
N_e^{C_2}\colon \Omega^\infty(\bbF_2^{\otimes\bullet+1}\otimes \Sigma^n\bbS) \to \Omega^\infty\left( N_e^{C_2}\bbF_2^{\otimes\bullet+1}\otimes \Sigma^{n(1+\sigma)}\bbS_{C_2}\right)^{C_2} \to \Omega^\infty\left((\bbF_2^h)^{\otimes\bullet+1}\otimes \Sigma^{n(1+\sigma)}\bbS_{C_2}\right)^{C_2}
\]
whose composite with the geometric fixed point map
\[
\Omega^\infty\left((\bbF_2^h)^{\otimes\bullet+1}\otimes \Sigma^{n(1+\sigma)}\bbS_{C_2}\right)^{C_2} \to \Omega^\infty(\bbF_2^t)^{\otimes\bullet+1}\otimes \Sigma^n\bbS
\]
is $\Omega^\infty$ of the map of resolutions associated to the Tate Frobenius $\bbF_2 \to \bbF_2^t$, and whose composite with the restriction
\[
\Omega^\infty\left((\bbF_2^h)^{\otimes\bullet+1}\otimes \Sigma^{n(1+\sigma)}\bbS_{C_2}\right)^{C_2} \to \Omega^\infty\left(\bbF_2^{\otimes\bullet+1}\otimes \Sigma^{2n}\bbS\right)
\]
is the map $P$ induced by functoriality of tensor squares and the product on $\bbF_2$. 

On other hand, the global structure of $\bbF_2^h$ provides an inflation $\infl_e^{C_p}\bbF_2 \to \bbF_2^h$, which combines with the inclusion of fixed points $a_\sigma^n\colon \infl_e^{C_2}(S^n) \to S^{n(1+\sigma)}$ for $n\geq 0$ to provide a map
\[
a_\sigma^n\colon \bbF_2^{\otimes\bullet+1}\otimes \Sigma^n\bbS \to \left((\bbF_2^h)^{\otimes\bullet+1}\otimes \Sigma^{n(1+\sigma)}\bbS_{C_2}\right)^{C_2}
\]
whose composite with the geometric fixed point map
\[
\left((\bbF_2^h)^{\otimes\bullet+1}\otimes \Sigma^{n(1+\sigma)}\bbS_{C_2}\right)^{C_2} \to (\bbF_2^t)^{\otimes\bullet+1}\otimes \Sigma^n\bbS
\]
is the map of resolutions associated to the unit $\bbF_2 \to \bbF_2^t$ and whose composite with the restriction
\[
\left((\bbF_2^h)^{\otimes\bullet+1}\otimes  \Sigma^{n(1+\sigma)}\bbS_{C_2}\right)^{C_2} \to \left(\bbF_2^{\otimes\bullet+1}\otimes \Sigma^{2n}\bbS\right)
\]
is the map induced by the diagonal map $\widetilde{\Delta}\colon S^n \to S^{2n}$.

Applying $\Tot\tau_{<m}(\bs)$ to these maps of cosimplicial spaces provides maps
\[
N_e^{C_2},\Omega^\infty a_\sigma^n\colon \Omega^\infty \Bock_m^{\bbF_2}(\nu(\Sigma^n\bbS))\to \Omega^\infty\Bock_m^{\bbF_2^h}(\nu(\Sigma^{n(1+\sigma)}\bbS_{C_2})),
\]
and we obtain a diagram of the form
\begin{center}\begin{tikzcd}[column sep=huge, row sep=large]
&\Omega^\infty\Bock_m^{\bbF_2}(\Sigma^{0,1}\nu(\Sigma^nP^\infty_n))\ar[dr,"\tr_{\bbF_2}"]\ar[d]\\
\Omega^\infty \Bock_m^{\bbF_2}(\Sigma^n\bbS)\ar[r,"N_e^{C_2} - \Omega^\infty a_\sigma^n"]\ar[dr,"0"]\ar[ur,dashed]&\Omega^\infty\Bock_m^{\bbF_2^h}(\nu(\Sigma^{n(1+\sigma)}\bbS_{C_2}))\ar[r,"\res^{C_2}_e"]\ar[d,"\Phi^{C_2}"]&\Omega^\infty \Bock_m^{\bbF_2}(\Sigma^n\bbS)\\
&\Omega^\infty \Bock_m^{\bbF_2}(\Sigma^{n}\bbS)
\end{tikzcd},\end{center}
where the column is $\Omega^\infty$ of \cref{eq:f2adamsiso} and, under the equivalence $\Omega^\infty \Bock_m^{\bbF_2}(\nu(X)) = \Omega^\infty_{0,m}\nu(X)$, the horizontal composite is the map
\[
\varphi_2\colon \Omega^{\infty}_{0,m}\Sigma^{n,n}\bbone_{\bbF_2} \to \Omega^\infty_{0,m}\Sigma^{2n,2n}\bbone_{\bbF_2}.
\]
As $N_e^{C_2}$ and $\Omega^\infty a_\sigma^n$ are equalized by geometric fixed points by the above discussion, there is a lift in the diagram as indicated, providing the desired factorization of $\varphi_2$ through the synthetic transfer. These lifts are by construction compatible with each other and with the lift of \cref{thm:lkpg}.
\end{proof}

\begin{cor}\label{cor:f2kphtpy}
The transfer induces compatibly split surjections
\begin{align*}
\bbZ_{(2)}\otimes \pi_{s,w}\nu_{\bbF_2}(\Sigma^\infty BC_2) &\to \bbZ_{(2)}\otimes \pi_{s,w-1}\bbone_{\bbF_2},\\
\pi_{s,w}\nu_{\bbF_2}(\Sigma^\infty BC_2) \otimes C(\tau^m) &\to \pi_{s,w-1}C(\tau^m)
\end{align*}
for all $m,s,w\geq 1$.
\end{cor}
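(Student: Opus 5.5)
We deduce the corollary from \cref{thm:newskp} with $n=0$. In that case $\Sigma^0 P^\infty_0\simeq \Sigma^\infty_+ BC_2\simeq \bbS\oplus\Sigma^\infty BC_2$, and the synthetic transfer is
\[
\tr_{\bbF_2}\colon \Sigma^{0,1}\nu_{\bbF_2}(\Sigma^\infty_+ BC_2)\to\bbone_{\bbF_2}.
\]
For each $1\le m\le\infty$, \cref{thm:newskp} gives a lift $H$ with $\Omega^\infty_{0,m}\tr_{\bbF_2}\circ H=\varphi_2$. The weight $w$ is handled by the $\Omega^\infty_{w,m}$ version, or equivalently by applying $\Sigma^{0,w}$ throughout. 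Taking $\pi_s$ for $s\geq 1$, $\varphi_2$ induces $-1$ by the remark following the definition of $\varphi_p$, since $\pi_s$ of the ``$x^2$'' part vanishes in positive degrees. Hence $-\pi_s(H)$ splits $\pi_s(\tr_{\bbF_2})$ on
\[
\pi_{s,w}\bigl(\Sigma^{0,1}\nu(\Sigma^\infty_+BC_2)\otimes C(\tau^m)\bigr)=\pi_{s,w-1}\bigl(\nu(\Sigma^\infty_+BC_2)\otimes C(\tau^m)\bigr)\to \pi_{s,w}C(\tau^m).
\]
The lifts are compatible in $m$ and $w$ by construction, and this compatibility is what ``compatibly split'' refers to. (After reindexing so that the weight shift sits on the target, this gives the weight conventions of the statement.)

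It remains to discard the summand $\nu(\bbS)$. First, $\nu_{\bbF_2}$ preserves the split sum $\bbS\oplus\Sigma^\infty BC_2$, since $\nu$ preserves split cofiber sequences. On the $\bbS$ summand, $\tr$ restricts to multiplication by $2$. This restriction does not factor through $\tau$, so we must identify what the $\tau$-factored map does on $\Sigma^{0,1}\nu(\bbS)$. It is $\tilde 2\colon\Sigma^{0,1}\bbone\to\bbone$, the synthetic lift of $2$ detected by $h_0$, which satisfies $\tau\tilde 2=2$.

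This summand is the main obstacle: we must show that the composite of the section with projection to the $\Sigma^\infty BC_2$ summand still splits. Following the argument of \cref{prop:sigmap}, write the section as $(a,b)$, so that $-\mathrm{id}=\tilde2 a+\tr' b$. Then $\tr'\circ b=-\mathrm{id}-\tilde 2 a$. We need $-\mathrm{id}-\tilde2a$ to be an isomorphism on $\pi_{s,\ast}$ for $s\geq1$. This holds if multiplication by $\tilde 2$ is topologically nilpotent on positive-stem homotopy, and it is: after $2$-localization, $\pi_{s,w}\bbone_{\bbF_2}$ for $s\geq1$ is $h_0$-adically complete with $h_0$ nilpotent in each bidegree, by the vanishing line of the Adams spectral sequence in positive stems. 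Thus $\mathrm{id}+\tilde2a$ is invertible via a geometric series. This is why the $\bbZ_{(2)}$ tensor appears for $m=\infty$. For $C(\tau^m)$, the groups are already $2$-power torsion and finite in each bidegree, so no localization is needed; there $\tilde2$ acts nilpotently on the positive stems, because $h_0$-towers in the Adams $E_2$ page are confined to stem $0$. Composing $b$ with $(-\mathrm{id}-\tilde2a)^{-1}$ then gives the required compatible splittings.
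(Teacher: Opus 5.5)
Your route is the paper's: the $n=0$ case of \cref{thm:newskp}, the splitting $\nu_{\bbF_2}(\Sigma^\infty_+BC_2)\simeq\bbone_{\bbF_2}\oplus\nu_{\bbF_2}(\Sigma^\infty BC_2)$, the fact that the synthetic transfer restricts to $\widetilde{2}$ on the first summand, and $\widetilde{2}$-torsion in positive stems to discard that summand. The gap is in the one step you spell out.

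From $-\mathrm{id}=\widetilde{2}a+\tr'b$ you conclude that $\mathrm{id}+\widetilde{2}a$ is invertible by a geometric series, because $\widetilde{2}$ is topologically nilpotent. That needs $(\widetilde{2}a)^j=\widetilde{2}^j a^j$, i.e.\ that $a$ commutes with $\widetilde{2}$, and you have not established this. The map $a\colon\pi_{s,w}\to\pi_{s,w-1}$ is $\pi_s$ of an unstable map of mapping spaces built from the multiplicative norm. So it is a group homomorphism compatible with $\tau$ and with reduction mod $\tau^m$, but it is not a priori a map of $\pi_{\ast,\ast}\bbone$-modules. Also, $\widetilde{2}a$ is a weight-preserving endomorphism of $\pi_{s,w}$, so nilpotence of $\widetilde{2}$ ``in each bidegree'' says nothing directly about its powers. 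In \cref{prop:sigmap} this issue does not arise, because the scalar there is the integer $p$, which commutes with every homomorphism.

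The repair is to use linearity on the other side: $\tr'$ is a map of synthetic spectra, hence $\pi_{\ast,\ast}\bbone$-linear. Apply $y=-\tr'b(y)-\widetilde{2}a(y)$ to $y=a(x)$ and pull $\widetilde{2}$ inside $\tr'$. After $k$ steps this gives
\[
x=-\tr'\Bigl(\sum_{j<k}(-\widetilde{2})^j\,b\bigl(a^j(x)\bigr)\Bigr)+(-\widetilde{2})^k a^k(x),
\]
so what you actually need is that the remainder eventually lies in the image of $\tr'$. This holds in either of two situations:
\begin{itemize}
\item once $\widetilde{2}^k$ annihilates the whole group $\pi_{s,w-k}$; the torsion must be uniform, since $a^k(x)$ moves with $k$;
\item for $m=\infty$, once $w-k\le s$. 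There $\tau$ identifies $\bbZ_{(2)}\otimes\pi_{s,w-k}\bbone_{\bbF_2}$ with $\bbZ_{(2)}\otimes\pi_s\bbS$, one has $\widetilde{2}a=2\cdot(\tau^{-1}a)$, and your geometric series is then legitimate.
\end{itemize}
This is presumably what the paper's one-line appeal to $\widetilde{2}$-torsion intends.

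Separately, your parenthetical on weights is not right. Since $\widetilde{2}\in\pi_{0,1}$, the transfer goes $\pi_{s,w-1}\nu_{\bbF_2}(\Sigma^\infty BC_2)\to\pi_{s,w}$, so no reindexing produces the $w-1$ printed on the target. That looks like a misprint in the statement, not something your argument should match.
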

\begin{proof}
With $\Sigma^\infty_+BC_2$ in place of $\Sigma^\infty BC_2$, this follows from the $n=0$ case of \cref{thm:newskp}. Under the splitting
\[
\nu_{\bbF_2}(\Sigma^\infty_+ BC_2)\simeq \bbone_{\bbF_2}\oplus\nu_{\bbF_2}(\Sigma^\infty BC_2),
\]
the synthetic transfer
\[
\tr_{\bbF_2}\colon \Sigma^{0,1}\nu_{\bbF_2}(\Sigma^\infty_+ BC_2) \to \bbone_{\bbF_2}
\]
restricts to 
\[
\widetilde{2}\colon \Sigma^{0,1}\bbone_{\bbF_2} \to \bbone_{\bbF_2}
\]
on the first summand. As $\bbZ_{(2)}\otimes \pi_{\ast,\ast}\bbone_{\bbF_2}$ is $\widetilde{2}$-torsion in positive stems, the synthetic transfer therefore remains a split surjection on $2$-primary components after restriction to the summand $\nu_{\bbF_2}(\Sigma^\infty BC_2)$.
\end{proof}

This concludes our proof of the $\bbF_2$-synthetic Kahn--Priddy theorem. One basic consequence is the existence of a map of spectral sequences converging to the stable Hopf invariant: 
\begin{center}\begin{tikzcd}
\Ext^{s,f}_{\cl}(\bbS)\ar[r,"H_{\alg}"]\ar[d,Rightarrow]&\Ext_\cl^{s,f-1}(\Sigma^\infty_+ BC_2)\ar[d,Rightarrow]\\
\pi_s\bbS\ar[r,"H"]&\pi_s\Sigma^\infty_+ BC_2
\end{tikzcd}.\end{center}
Here, $\Ext^{s,f}_{\cl}(X) = \Ext_{\calA^\vee}^f(\Sigma^{s+f}\bbF_2,H_\ast(X;\bbF_2))$ is graded by stem of convergence and filtration. Both the source and target of $H_{\alg}$ may be efficiently computed via minimal resolutions, and it would be highly desirable to have a homological description of $H_{\alg}$ which is amenable to algorithmic computation in these terms. Although we do not quite have this, we can at least give an algebraic description of $H_{\alg}$.

Write $\bbF_2^\bbR \in \SH(\bbR)$ and $\ul{\bbF}_2 \in \Sp^{C_2}$ for the objects representing mod $2$ motivic cohomology and Bredon cohomology respectively.

\begin{lemma}\label{lem:radams}
Betti realization $\Be\colon \SH(\bbR)\to\Sp^{C_2}$ and the unit map $\Be(\bbF_2^\bbR)\simeq\ul{\bbF}_2 \to \bbF_2^h$ induce equivalences
\[
\Upsilon_\star^{\bbF_2^\bbR}(\nu(\Sigma^{2n,n}\bbone_\bbR))\simeq \Upsilon_\star^{\ul{\bbF}_2}(\nu(\Sigma^{n(1+\sigma)}\bbS_{C_2}))\simeq \Upsilon_\star^{\bbF_2^h}(\nu(\Sigma^{n(1+\sigma)}\bbS_{C_2}))
\]
for $n \geq 0$.
\end{lemma}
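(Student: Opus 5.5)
Both equivalences are induced by maps of cosimplicial spectra, namely the canonical comparison maps between the Adams resolutions, and $\Bock_\star$ is obtained from these by applying $\Tot\tau_{<\star}\tau_{\geq 0}$. So I would show that each comparison map induces an isomorphism on the $E_2$-pages of the associated Bousfield--Kan spectral sequences. Better, I would show it induces an isomorphism on homotopy groups levelwise after passing to the relevant cohomology of cochain complexes. Either way the towers $\Tot\tau_{<m}\tau_{\geq 0}$ then agree for every $m$. Concretely, it suffices to check that the maps of cochain complexes
\[
\pi_\ast\bfmap(\bbone,E^{\otimes\bullet+1}\otimes X)
\]
are quasi-isomorphisms in each internal degree. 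Their cohomology is the relevant Ext group, so the claim reduces to identifying Ext groups over the appropriate Hopf algebroids, compatibly.

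For the first equivalence, Betti realization is symmetric monoidal. It sends $\bbF_2^\bbR$ to $\ul{\bbF}_2$ and $\Sigma^{2n,n}\bbone_\bbR$ to $\Sigma^{n(1+\sigma)}\bbS_{C_2}$. Hence it gives a map of cobar resolutions
\[
\map_{\SH(\bbR)}(\bbone,(\bbF_2^\bbR)^{\otimes\bullet+1}\otimes\Sigma^{2n,n}\bbone)\to\bigl(\ul{\bbF}_2^{\otimes\bullet+1}\otimes S^{n(1+\sigma)}\bigr)^{C_2}.
\]
I would use the known comparison of dual Steenrod algebras, due to Voevodsky for the motivic side and Hu--Kriz for the $C_2$-equivariant side. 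The $C_2$-equivariant dual Steenrod algebra is obtained from the $\bbR$-motivic one by base change along $\pi_{\ast,\ast}\bbF_2^\bbR\to\pi_\star\ul{\bbF}_2$. The coefficient map is an isomorphism in the range of bidegrees $a+b\sigma$ where $a$ and $b$ have the right signs, i.e.\ away from the ``negative cone''. Levelwise, the homotopy of $(\bbF_2^{\bbR})^{\otimes k+1}$ is free over the coefficients. So in bidegrees of the form $n(1+\sigma)+t$ with $n\geq0$, it suffices to check that the relevant degrees avoid the negative cone; this is where the hypothesis $n\geq 0$ enters. Note that $\tau_{\geq0}$ truncates away negative integer degrees, which helps here.

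For the second equivalence, I would run the same argument for $\ul{\bbF}_2\to\bbF_2^h$. The coefficients of $\bbF_2^h$ are $\pi_\star\bbF_2^h=\bbF_2[u_\sigma^{\pm1},a_\sigma]$. This agrees with $\pi_\star\ul{\bbF}_2$ on the positive cone, where $\ul{\bbF}_2$ is $\bbF_2[a_\sigma,u_\sigma]$ in the relevant range, but $\bbF_2^h$ has additional classes from inverting $u_\sigma$. The structure of $\pi_\star(\bbF_2^h)^{\otimes k+1}$ is controlled by \cref{lem:regulareuler}: $a_\sigma$ is regular and restriction is surjective. So $\pi_\star$ is determined by the underlying homotopy together with $a_\sigma$-regularity. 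Concretely, $(\bbF_2^h)^{\otimes k+1}$ is cofree with underlying $\bbF_2^{\otimes k+1}$ having trivial action, so its homotopy is $\pi_\ast(\bbF_2^{\otimes k+1})\otimes\bbF_2[u_\sigma^{\pm1},a_\sigma]$. In the relevant degrees, the map from $\pi_\star\ul{\bbF}_2^{\otimes k+1}$ should then be an isomorphism.

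The main obstacle is this last degree bookkeeping: verifying that, for the specific bidegrees $n(1+\sigma)+t$ with $t\geq0$ (up to shifts by $(1-\sigma)$ from $u_\sigma$ and the cosimplicial degree), the comparison maps of coefficient rings are isomorphisms. That is, one must check that the extra $u_\sigma^{-1}$-classes of $\bbF_2^h$, and the negative cone of $\ul{\bbF}_2$, never contribute. If a levelwise isomorphism fails, I would fall back on comparing $E_2$-pages: these are $\Ext$ over $\calA^\vee$-type Hopf algebroids, and the cobar complexes differ only by a flat base change. Hence Ext agrees in the range, giving an equivalence of the $\Tot\tau_{<m}$ towers by induction on $m$ via the fiber sequences of \cref{rmk:fibers}.
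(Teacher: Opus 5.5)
\textbf{The gap is in the second equivalence.} Your reduction to quasi-isomorphisms of the cochain complexes $\pi_t(\cdots)$ for each $t\geq 0$ is sound. Your first equivalence also matches the paper, which uses a levelwise equivalence of cosimplicial spectra for the reason you give: everything is free over $\bbF_2[\tau,\rho]$ on generators of nonnegative weight, so for $n\geq 0$ only weights $\leq 0$ occur and the negative cone never appears.

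The levelwise comparison you want for $\ul{\bbF}_2\to\bbF_2^h$ fails already in cosimplicial degree $0$. For $n\geq 1$ and $0\leq t<n$,
\begin{itemize}
\item $\pi_t\bigl((\ul{\bbF}_2\otimes S^{n(1+\sigma)})^{C_2}\bigr)=\pi^{C_2}_{(t-n)-n\sigma}\ul{\bbF}_2=0$;
\item $\pi_t\bigl((\bbF_2^h\otimes S^{n(1+\sigma)})^{C_2}\bigr)\cong H^{2n-t}(BC_2;\bbF_2)\cong\bbF_2$, spanned by $a_\sigma^{2n-t}u_\sigma^{t-n}$.
\end{itemize}
So the $u_\sigma^{-1}$-classes do contribute, in degrees that $\tau_{\geq 0}$ does not remove. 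Separately, $(\bbF_2^h)^{\otimes k+1}$ is not cofree for $k\geq 1$: its geometric fixed points are $(\bbF_2^t)^{\otimes k+1}$, not $(\bbF_2^{\otimes k+1})^{tC_2}$. So your formula for its homotopy is also wrong.

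Your fallback does not repair this. The $\bbF_2^h$-cobar complex is indeed obtained from the $\ul{\bbF}_2$-cobar complex by inverting $u_\sigma$ in each tensor factor. But such a base change does not preserve Ext: change of rings needs faithful flatness over the associated stack, and inverting $u_\sigma$ discards the invariant locus $a_\sigma=u_\sigma=0$. Since the two cobar complexes differ exactly in the degrees at issue, ``Ext agrees in the range'' is the whole content of the claim. You give no reason the extra classes form an acyclic piece.

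\textbf{The missing idea is isotropy separation plus the five lemma.}
\begin{enumerate}
\item $a_\sigma=\Be(\rho)$ acts injectively on $\pi^{C_2}_\star\ul{\bbF}_2^{\otimes\bullet+1}$ in nonpositive weights; this is immediate from the motivic description you already have.
\item Hence the proof of \cref{lem:fibf2} works with $\ul{\bbF}_2$ in place of $\bbF_2^h$ for $n\geq 0$. This gives a fiber sequence $\Upsilon^{\bbF_2}_\star(\Sigma^{0,1}\nu(\Sigma^nP^\infty_n))\to\Upsilon^{\ul{\bbF}_2}_\star(\nu(\Sigma^{n(1+\sigma)}\bbS_{C_2}))\to\Upsilon^{\Phi^{C_2}\ul{\bbF}_2}_\star(\nu(\Sigma^n\bbS))$.
\item The unit map $\ul{\bbF}_2\to\bbF_2^h$ maps this to the corresponding sequence for $\bbF_2^h$. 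On the homotopy-orbit terms it is an equivalence, since these only see the common underlying spectrum $\bbF_2$. On the geometric fixed point terms it is an equivalence, since the Adams towers based on $\Phi^{C_2}\ul{\bbF}_2$ and on $\bbF_2^t$ both agree with the classical one.
\item By the five lemma the middle map is an equivalence.
\end{enumerate}
In your terms, the extra $u_\sigma^{-1}$-classes all live in the geometric fixed point term. They are the difference between $\pi_\ast\Phi^{C_2}\ul{\bbF}_2=\bbF_2[u_\sigma/a_\sigma]$ and $\pi_\ast\bbF_2^t=\bbF_2[(u_\sigma/a_\sigma)^{\pm1}]$. Only there, where both are $\bbF_2$-algebras and change of rings along $\bbF_2$ is legitimate, does your base-change intuition actually apply.
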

\begin{proof}
The equivalence $\Upsilon_\star^{\bbF_2^\bbR}(\nu(\Sigma^{2n,n}\bbone_\bbR))\simeq \Upsilon_\star^{\ul{\bbF}_2}(\nu(\Sigma^{n(1+\sigma)}\bbS_{C_2}))$ for $n\geq 0$ is induced by an equivalence
\[
F(\bbone_\bbR,\Sigma^{2n,n}(\bbF_2^\bbR)^{\otimes\bullet+1})\simeq (\Sigma^{n(1+\sigma)}\ul{\bbF}_2^{\otimes\bullet+1})^{C_2}
\]
of cosimplicial spectra, see \cite{duggerisaksen2017z2,belmontguillouisaksen2021c2}. In particular, $a_\sigma = \Be(\rho)$ acts injectively on the cobar complex $\pi_\star^{C_2}\ul{\bbF}_2^{\bullet+1}$ in nonpositive weights, and so \cref{lem:fibf2} may be carried out with $\ul{\bbF}_2$ in place of $\bbF_2^h$ provided $n \geq 0$; note that the Adams spectral sequence based on $\Phi^{C_2}\ul{\bbF}_2$ is again isomorphic to the classical Adams spectral sequence. The unit map $\ul{\bbF}_2 \to \bbF_2^h$ thus induces a map of fiber sequences
\begin{center}\begin{tikzcd}
\Bock_\star^{\bbF_2}(\Sigma^{0,1}\nu(\Sigma^n P^\infty_n)) \ar[r]\ar[d,"\simeq"]& \Bock_\star^{\bbF_2^h}(\nu(\Sigma^{n(1+\sigma)}\bbS_{C_2})) \ar[r,"\Phi^{C_2}"]\ar[d]& \Bock_\star^{\bbF_2}(\Sigma^n\bbS)\ar[d,"\simeq"]\\
\Bock_\star^{\bbF_2}(\Sigma^{0,1}\nu(\Sigma^n P^\infty_n)) \ar[r]& \Bock_\star^{\ul{\bbF}_2}(\nu(\Sigma^{n(1+\sigma)}\bbS_{C_2})) \ar[r,"\Phi^{C_2}"]& \Bock_\star^{\bbF_2}(\Sigma^n\bbS)
\end{tikzcd}.\end{center}
As the left and right vertical maps are equivalences, so is the middle.
\end{proof}

Write
\[
\Ext_\bbR^{(s,w),f} \Rightarrow\pi_{s,w}(\bbone_\bbR)_{\bbF_2^\bbR}^\wedge
\]
for the $\bbR$-motivic Adams spectral sequence, graded by degree of convergence and filtration.

\begin{cor}
There is a split short exact sequence of spectral sequences
\[
0 \to \Ext_\cl^{s-w,f-1}(P^\infty_w) \to \Ext_\bbR^{(s,w),f}\to \Ext_\cl^{s-w,f}(\bbS) \to 0
\]
for $w\geq 0$, realizing $\Ext_\cl^{s-w,f-1}(P^\infty_w)$ as the $\rho$-torsion subgroup of $\Ext_\bbR^{(s,w),f}$.
\end{cor}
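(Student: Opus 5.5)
Combine \cref{lem:radams} with \cref{lem:fibf2} and the splitting from \cref{thm:newskp}. By \cref{lem:radams}, for $w\geq 0$ the $\bbR$-motivic Adams tower $\Upsilon_\star^{\bbF_2^\bbR}(\nu(\Sigma^{2w,w}\bbone_\bbR))$ is equivalent to $\Upsilon_\star^{\bbF_2^h}(\nu(\Sigma^{w(1+\sigma)}\bbS_{C_2}))$. \cref{lem:fibf2} (with $n=w$, extended to the left as in \cref{eq:f2adamsiso}) places this tower in a fiber sequence of towers
\[
\Bock_\star^{\bbF_2}(\Sigma^{0,1}\nu(\Sigma^w P^\infty_w)) \to \Bock_\star^{\bbF_2^h}(\nu(\Sigma^{w(1+\sigma)}\bbS_{C_2})) \xrightarrow{\Phi^{C_2}} \Bock_\star^{\bbF_2}(\nu(\Sigma^w\bbS)).
\]
Taking $\star=1$ identifies the $E_2$-pages. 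The proof of \cref{lem:fibf2} shows that the underlying cosimplicial fiber sequence is short exact on homotopy groups levelwise, i.e.\ on cobar complexes. The long exact sequence in cohomotopy therefore yields a long exact sequence on $\Ext$. Regrading the stems and filtrations (the $\Sigma^{0,1}$ accounts for the shift $f\mapsto f-1$, and $\pi_{s}$ of $\Sigma^{2w,w}$ corresponds to stem $s-w$ classically) gives an exact sequence
\[
\cdots\to\Ext_\cl^{s-w,f-1}(P^\infty_w)\to\Ext_\bbR^{(s,w),f}\to\Ext_\cl^{s-w,f}(\bbS)\xrightarrow{\delta}\cdots
\]
The boundary $\delta$ is the algebraic transfer, i.e.\ the $\tau$-reduction of $\tr_{\bbF_2}$.

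To see that $\delta=0$ and the sequence is split short exact, use \cref{thm:newskp}. The maps $N_e^{C_2}$ and $a_\sigma^w$ both lift $\Phi^{C_2}$, through the Tate Frobenius and the unit respectively. These agree after identifying both Adams spectral sequences with the classical one, up to the usual homotopy: the Frobenius on $\bbF_2\to\bbF_2^t$ is the unique ring map. So $\Omega^\infty a_\sigma^w$, and already the linear map $a_\sigma^w$ of resolutions, gives a section of $\Phi^{C_2}$ at the level of the cosimplicial objects before $\Tot\tau_{<\star}\tau_{\geq0}$. It is a map of spectra, not just of spaces, so it induces a compatible splitting of towers for every $\star$. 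This makes the sequence of towers split, hence split on every page of the spectral sequences, including differentials. This gives the split short exact sequence of spectral sequences. I expect the main care to be needed here: checking that $a_\sigma^w$ composed with $\Phi^{C_2}$ is exactly the equivalence used to identify the third term, for $w\geq 0$.

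Finally, for the $\rho$-torsion identification, $\rho$ corresponds to $a_\sigma$ under Betti realization. The map $\Phi^{C_2}$ inverts $a_\sigma$, and its target $\Ext_\cl(\bbS)$, viewed as $\Ext_\bbR[\rho^{-1}]$ in the appropriate degrees, is $\rho$-torsion free. So every $\rho$-torsion class lies in the kernel of $\Phi^{C_2}$, namely the image of $\Ext_\cl(P^\infty_w)$. Conversely, that image is $\rho$-torsion. It is killed by $a_\sigma$-localization, hence by $\Phi^{C_2}$ on the cobar complex. Since each cobar element is hit by finitely many $\rho$-multiplications before landing in the zero weight range (using $\Ext_\cl^{*,*}(P^\infty_w)$ bounded in the cell direction), and $\rho$ lowers $w$ compatibly with $P^\infty_w\to P^\infty_{w-1}$ (which is nilpotent on each class by connectivity), every such class is $\rho$-power torsion. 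This is the step I would double-check, possibly using instead that the $\rho$-localized $\Ext_\bbR$ equals $\Ext_\cl[\rho^{\pm1}]$ (Dugger--Isaksen), so the kernel of localization is exactly the $\rho$-torsion.
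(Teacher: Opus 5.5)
Your main line is the paper's proof, which in full reads: \cref{lem:radams} identifies the $\bbR$-motivic tower with the $\bbF_2^h$-tower, whose fiber sequence from \cref{lem:fibf2} is split by $a_\sigma^n\circ\infl_e^{C_2}$. That part of your argument is fine, including your observation that a splitting of towers gives a splitting on every page and on differentials.

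Two side remarks are wrong, though neither is load-bearing.

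\emph{The boundary $\delta$.} It is not the algebraic transfer. It runs from the $\Ext_\cl(\bbS)$ term to the $\Ext_\cl(P^\infty_n)$ term and comes from the isotropy-separation boundary $\Phi^{C_2}X\to\Sigma X_{\h C_2}$. By contrast, $\tr_{\bbF_2}$ is the composite of the fiber inclusion with $\res^{C_2}_e$ in \cref{eq:f2adamsiso}, and it goes the other way.

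\emph{The splitting.} The norm and Tate Frobenius play no role here. The only input is that $\Phi^{C_2}\circ a_\sigma^n\circ\infl_e^{C_2}$ is the map of resolutions induced by the unit $\bbF_2\to\bbF_2^t$. That map is not an equivalence of cosimplicial objects; it becomes one only after $\Tot\tau_{<\star}\tau_{\geq 0}$. So you obtain a section of towers, which is all you need, and not a section ``before $\Tot\tau_{<\star}\tau_{\geq0}$.''

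The genuine problem is the $\rho$-torsion clause, which the paper leaves implicit. Your argument for ``the fiber is $\rho$-torsion'' does not work as written. Multiplication by $\rho$ does not act through a map $P^\infty_w\to P^\infty_{w-1}$. Moreover, a map that lowers the bottom cell could never be nilpotent for connectivity reasons, and ``landing in the zero weight range'' proves nothing.

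Here is the correct mechanism. The class $a_\sigma=\Be(\rho)$ acts on the whole isotropy-separation sequence, carrying the sequence for $\Sigma^{n(1+\sigma)}\bbS_{C_2}$ to the one for $\Sigma^{(n+1)(1+\sigma)}\bbS_{C_2}$, up to a shift by $1$.
\begin{itemize}
\item On the quotient it acts as the identity of $\Ext_\cl^{t,f}(\bbS)$, because $\Phi^{C_2}(a_\sigma)$ is an equivalence compatible with the unit identifications.
\item On the fiber it acts through $(a_\sigma)_{\h C_2}$, which is the collapse $P^\infty_n\to P^\infty_{n+1}$.
\end{itemize}
Since $P^\infty_{n+k}$ is $(n+k-1)$-connected, $\rho^k$ kills every class in a fixed classical stem $t$ once $n+k>t$. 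Conversely, $\rho^kx=0$ forces $\Phi^{C_2}(x)=0$. Hence the fiber is exactly the $\rho$-power torsion.

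The reversed direction in your argument comes from reading the indexing literally. With $\rho$ in degree $(-1,-1)$, the tower of $\Sigma^{2n,n}\bbone_\bbR$ computes $\pi_{\ast-2n,-n}$. So the sequence really lives in weight $w=-n\le 0$ and involves $P^\infty_{-w}$. Indeed $\Ext_\bbR^{(1,1),0}=0$, whereas the literal $w=1$ statement would make it surject onto $\Ext_\cl^{0,0}(\bbS)=\bbF_2$.

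Your Dugger--Isaksen fallback can be made to work, but it is no shortcut. Noting that the kernel of $\rho$-localization is the $\rho$-torsion is a tautology; the real work is identifying $\Phi^{C_2}$ with the localization map. To do that you need the same $\rho$-equivariance of the sequence established above. You then compare the surjection $\Ext_\bbR[\rho^{-1}]\to\Ext_\cl[\rho^{\pm1}]$ with the abstract isomorphism degreewise, using finite-dimensionality, to conclude it is injective.
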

\begin{proof}
This follows from \cref{lem:radams} in light of the fact that
\[
\Phi^{C_2}\colon \Bock_\star^{\bbF_2^h}(\nu(\Sigma^{n(1+\sigma)}\bbS_{C_2})) \to \Bock_\star^{\bbF_2}(\Sigma^n\bbS)
\]
is split by $a_\sigma^n\circ \infl_e^{C_2}$ for $n \geq 0$.
\end{proof}

Thus we may identify
\[
\Ext_\cl^{s,f-1}(\Sigma^\infty_+BC_2)\subset\Ext_\bbR^{(s,0),f}
\]
as the $\rho$-torsion subgroup. Write
\[
c\colon \Ext_\cl^{s,f} \to \Ext_\bbR^{(s,0),f},\qquad P\colon \Ext_\cl^{s,f} \to \Ext_\bbR^{(2s+f,s+f),f}
\]
for the Hurewicz map (see \cite[Section 2]{lishiwangxu2019hurewicz} and \cite[Section 5.1]{balderramaculverquigley2025motivic}) and the lift of $\Sq^0$ (see \cite[Section 3.2]{balderramaculverquigley2025motivic} and \cite[Eq.\ 4]{balderrama2024total}) respectively.

\begin{prop}\label{prop:sec}
The homomorphism
\[
H_{\alg}\colon \Ext_\cl^{s,f}(\bbS) \to \Ext_\cl^{s,f-1}(\Sigma^\infty_+ BC_2)\subset \Ext_\bbR^{(s,0),f}
\]
is given by
\[
H_{\alg}(x) = \rho^{s+f} P(x) - c(x).
\]
\end{prop}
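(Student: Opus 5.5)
The plan is to unwind the construction of the lift in \cref{thm:newskp} for $n=0$ and $m=1$, and to transport it along \cref{lem:radams}. For $m=1$, the map $\Omega^\infty_{0,1}$ on $\pi_\ast$ computes $E_2$-pages, that is, Ext groups. The lift $H$ was produced as follows. We took the difference $N_e^{C_2}-\Omega^\infty a_\sigma^0$ of two maps
\[
\Omega^\infty\Bock_1^{\bbF_2}(\nu\bbS)\to\Omega^\infty\Bock_1^{\bbF_2^h}(\nu\bbS_{C_2}).
\]
These two maps are equalized by $\Phi^{C_2}$, and the difference is then lifted along the fiber inclusion $\Bock^{\bbF_2}_1(\Sigma^{0,1}\nu\Sigma^\infty_+BC_2)\to\Bock_1^{\bbF_2^h}(\nu\bbS_{C_2})$ of \cref{eq:f2adamsiso}. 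On homotopy groups, this fiber inclusion is injective. Indeed, \cref{lem:fibf2} gives a short exact sequence on $\pi_\ast$ of the cobar complexes, and the splitting by $a_\sigma^0\circ\infl$ makes the long exact sequence on Ext split. So $H_{\alg}(x)$ is simply the class $N_e^{C_2}(x)-a_\sigma^0(x)$ in $\Ext$ for $\bbF_2^h$, which lands in the $\Phi^{C_2}$-kernel. Via \cref{lem:radams}, this kernel is the $\rho$-torsion subgroup of $\Ext_\bbR^{(s,0),f}$. It therefore suffices to identify separately the images of $N_e^{C_2}(x)$ and $a^0_\sigma(x)=\infl(x)$ in $\Ext_\bbR^{(s,0),f}$ under the equivalences of \cref{lem:radams}.

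First I would handle the inflation term. The map $\infl_e^{C_2}\bbF_2\to\bbF_2^h$ is compatible with the map $c_\bbR\colon\bbF_2\to\bbF_2^\bbR$ induced by the constant/Hurewicz comparison: the motivic unit composed with Betti realization factors through $\ul{\bbF}_2$, and inflation is the unit from trivial-action objects. The Hurewicz map $c$ of \cite{lishiwangxu2019hurewicz} is defined exactly as the map on cobar complexes induced by this comparison. This gives $a_\sigma^0(x)\mapsto c(x)$.

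Second, I would handle the norm term. The key input is that the equivariant norm $N_e^{C_2}$ on cobar complexes lands in $RO(C_2)$-degree $(s+f)(1+\sigma)$ before any Euler class is used. For a cocycle in stem $s$ and filtration $f$, the norm of the corresponding map $S^{s+f}\to\bbF_2^{\otimes f+1}$ (in the Tot/décalage grading) is a map $S^{(s+f)(1+\sigma)}\to N(\bbF_2^{\otimes f+1})$. By the definition in \cite[Eq.\ 4]{balderrama2024total} and \cite[Section 3.2]{balderramaculverquigley2025motivic}, this is precisely the lift $P(x)\in\Ext_\bbR^{(2s+f,s+f),f}$ of $\Sq^0$, with $(2s+f,s+f)$ matching $(s+f)(1+\sigma)$ shifted by $f$. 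To land in weight $0$ we must restrict along $a_\sigma^{s+f}\colon S^{s+f}\to S^{(s+f)(1+\sigma)}$, which corresponds to multiplication by $\rho^{s+f}=\Be^{-1}(a_\sigma)^{s+f}$. This gives $N_e^{C_2}(x)\mapsto\rho^{s+f}P(x)$. Subtracting the two identifications yields $H_{\alg}(x)=\rho^{s+f}P(x)-c(x)$.

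The main obstacle is this second step. I need to check carefully that the norm appearing in the proof of \cref{thm:newskp} matches, after the comparison of \cref{lem:radams}, the cobar-level definition of $P$ in the cited references, including the bookkeeping of the $\Tot$-shift that converts the $\sigma$-multiple from $s$ to $s+f$. I would first verify this for $f=0$, where it reduces to the norm of a class in $\pi_s$ against $a_\sigma^s$, and then argue by naturality of the norm in the cosimplicial degree. I would also check that \cref{lem:radams} is used only in weights $n\ge0$, as it requires.
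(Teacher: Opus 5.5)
Your reduction matches the paper's. $H_{\alg}$ is the $E_2$-map induced by $N_e^{C_2}-\infl_e^{C_2}$ from the classical cobar complex to the $\ul{\bbF}_2$-cobar complex, read in weight $0$ through \cref{lem:radams}. Split injectivity of the fiber inclusion then lets you compute it term by term. Your treatment of the inflation term is fine. Your explanation of the factor $\rho^{s+f}$ is also right once the grading is set up correctly:
\begin{itemize}
\item A class of $\Ext_\cl^{s,f}$ lives on $\Omega^\infty_{t,1}$ with $t=s+f$, reached from $\Omega^\infty_{0,t+1}\to\Omega^\infty_{0,t}$ via \cref{rmk:fibers}. (Note that $\pi_\ast\Omega^\infty_{0,1}$ only sees $t=0$.) So the class sits on the $\pi_t$-layer of the cosimplicial space.
\item A map of cosimplicial spaces acts on that layer by $\pi_t$ of its levelwise components.
\item $\pi_t$ of the unstable norm $\Omega^\infty R\to\Omega^\infty(N_e^{C_2}R)^{C_2}$ is $x\mapsto a_\sigma^t N_e^{C_2}(x)$, because the diagonal $S^t\to S^{t(1+\sigma)}$ is $a_\sigma^t$.
\end{itemize}

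The gap is the remaining identification $[a_\sigma^{t}N_e^{C_2}(x)]=\rho^{t}P(x)$. You assert it holds ``by definition'' and then list it as the main obstacle. This is the substantive step, and the paper does not get it by unwinding the definition of $P$ in \cite{balderramaculverquigley2025motivic,balderrama2024total}. It imports exactly this statement, that the norm on cobar complexes induces $x\mapsto\rho^{s+f}P(x)$, from \cite[Theorem 1.1.1]{balderrama2024total}. The paper's proof is your reduction followed by that citation.

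The verification you sketch would not supply this step, for three reasons.
\begin{itemize}
\item The case $f=0$ is vacuous: $\Ext^{s,0}_\cl(\bbS)$ is concentrated in stem $0$, where the claim is trivial.
\item Compatibility with cofaces and codegeneracies only says that the norm is a map of cosimplicial objects. It does not compute the norm on $\pi_\ast\bbF_2^{\otimes f+1}=(\calA^\vee)^{\otimes f}$; that requires knowing the norms of the generators of $\calA^\vee$ in $\pi^{C_2}_\star(\ul{\bbF}_2\otimes\ul{\bbF}_2)$.
\item $N_e^{C_2}(x)$ itself lies in weight $s+f$, outside the range $n\geq 0$ covered by \cref{lem:radams}. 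The $RO(C_2)$-graded norm is additive only modulo transfers, which are killed by $a_\sigma$ but need not vanish in positive weight. So $N_e^{C_2}(x)$ need not even be a cocycle representing a class of $\Ext_\bbR^{(2s+f,s+f),f}$; only $a_\sigma^{s+f}N_e^{C_2}(x)$ is canonically a weight-zero cocycle.
\end{itemize}

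To close the argument, cite \cite[Theorem 1.1.1]{balderrama2024total} for the norm term. Alternatively, reprove it, which requires that computation on $\calA^\vee$ together with a comparison against the actual definition of $P$.
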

\begin{proof}
By construction and \cref{lem:radams}, the map
\[
H_{\alg}\colon \Ext_\cl^{s,f}(\bbS) \to \Ext_\cl^{s,f-1}(\Sigma^\infty_+ BC_2)\subset \Ext_\bbR^{(s,0),f}
\]
of spectral sequences may be identified as the one induced by the map
\[
N_e^{C_2} - \infl_e^{C_2} \colon \Omega^\infty(\bbF_2^{\otimes\bullet+1}) \to \Omega^\infty(\ul{\bbF}_2^{\otimes\bullet+1})^{C_2}
\]
of cosimplicial spaces. The given formula now follows from \cite[Theorem 1.1.1]{balderrama2024total}.
\end{proof}

\begin{ex}
$H_{\alg}(h_n) = \tau^{\lfloor 2^{n-1}\rfloor}h_n$.
\tqed
\end{ex}

\subsection{The \texorpdfstring{$MU$}{MU}-synthetic Kahn--Priddy theorem}
Fix a prime $p$. Under the splitting $\Sigma^\infty_+BC_p\simeq \bbS \oplus\Sigma^\infty BC_p$, the transfer may be written as a sum of two maps
\[
p\colon \bbS \to \bbS,\qquad \widetilde{\tr}\colon \Sigma^\infty BC_p \to \bbS.
\]
As $MU_\ast\Sigma^\infty BC_p$ is concentrated in odd degrees, $\widetilde{\tr}$ vanishes in $MU$-homology for degree reasons, and so we obtain a synthetic transfer
\[
\tr_{MU}\colon \bbone_{MU}\oplus \Sigma^{0,1}\nu_{MU}(\Sigma^\infty BC_p) \to \bbone_{MU}.
\]
The $MU$-synthetic Kahn--Priddy theorem is now proved in essentially the same way as the $\bbF_2$-synthetic Kahn--Priddy theorem, using tom Dieck's homotopical cobordism $MU_{C_p}$ as a good $C_p$-equivariant lift of $MU$. However it is complicated by the above slightly different form of the synthetic transfer, and the closely related fact that the Euler class is not regular on $MU_{C_p}$. For our purposes, the following analogue of \cref{lem:fibf2} is sufficient.

\begin{lemma}\label{lem:fibmu}
The synthetic transfer sits in a diagram
\begin{center}\begin{tikzcd}
\Bock_\star^{MU}(\nu(\bbS)) \oplus \Bock_\star^{MU}(\Sigma^{0,1}\nu(\Sigma^\infty BC_p))\ar[dr,"\tr_{MU}"']\ar[r]&\Bock_\star^{MU_{C_p}}(\nu(\bbS_{C_p}))\ar[r,"\Phi^{C_p}"]\ar[d,"\res^{C_p}_e"]&\Bock_\star^{\Phi^{C_p}MU}(\nu(\bbS))\\
&\Bock_\star^{MU}(\nu(\bbS))
\end{tikzcd}\end{center}
in which the row is a fiber sequence.
\end{lemma}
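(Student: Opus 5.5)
The plan is to imitate \cref{lem:fibf2}, replacing regularity of the Euler class by evenness. I would start from the isotropy separation fiber sequence of cosimplicial spectra
\[
MU^{\otimes\bullet+1}\otimes \Sigma^\infty_+ BC_p \longrightarrow \left(MU_{C_p}^{\otimes\bullet+1}\right)^{C_p} \xrightarrow{\ \Phi^{C_p}\ } \Phi^{C_p}MU^{\otimes\bullet+1}.
\]
Here the first term is identified by the Adams isomorphism, using that $C_p$ acts trivially on the underlying spectrum $MU$. The stable splitting $\Sigma^\infty_+BC_p\simeq \bbS\oplus\Sigma^\infty BC_p$ splits this fiber term cosimplicially as $MU^{\otimes\bullet+1}\oplus MU^{\otimes\bullet+1}\otimes\Sigma^\infty BC_p$.

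The key input is that, in each cosimplicial degree, both $\pi^{C_p}_\ast(MU_{C_p}^{\otimes k})$ and $\pi_\ast\Phi^{C_p}(MU_{C_p}^{\otimes k})\cong\pi_\ast(\Phi^{C_p}MU_{C_p})^{\otimes k}$ are concentrated in even degrees. For $k=1$ this is the classical computation of tom Dieck bordism and its geometric fixed points (Sinha, Strickland). For higher $k$ I would use that $MU_{C_p}^{\otimes k}$ is free over $MU_{C_p}$ by the equivariant Thom isomorphism, so that it is again even. I expect this to be the main obstacle: one must get evenness of all the tensor powers and of their geometric fixed points, and cite or prove the needed freeness cleanly.

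Granting evenness, I would analyze the long exact sequence degreewise. The summand $MU^{\otimes k}\to (MU_{C_p}^{\otimes k})^{C_p}$ is the transfer. It is injective on homotopy because $\res\circ\tr$ is multiplication by $p$ on the torsion-free even groups $\pi_\ast MU^{\otimes k}$. The odd summand $MU^{\otimes k}\otimes\Sigma^\infty BC_p$ has odd homotopy, so it maps to zero on homotopy. Write $X'$ for the cofiber of the first summand. Then $MU^{\otimes\bullet+1}\to(MU_{C_p}^{\otimes\bullet+1})^{C_p}\to X'$ is short exact on homotopy, and $X'\to\Phi^{C_p}MU^{\otimes\bullet+1}$ is injective on homotopy with cofiber $MU^{\otimes\bullet+1}\otimes\Sigma\Sigma^\infty BC_p$. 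Since $\tau_{<m}\tau_{\geq0}$ preserves cofiber sequences which are short exact on homotopy, and $\Tot$ is exact, we get two fiber sequences. The first is $\Bock_\star^{MU}(\nu(\bbS))\to\Bock_\star^{MU_{C_p}}(\nu(\bbS_{C_p}))\to\Bock_\star(X')$. The second identifies $\Fib(\Bock_\star(X')\to\Bock_\star^{\Phi^{C_p}MU}(\nu(\bbS)))$ with
\[
\Sigma^{-1}\Tot\tau_{<\star}\tau_{\geq0}\bigl(MU^{\otimes\bullet+1}\otimes\Sigma\Sigma^\infty BC_p\bigr)=\Bock_\star^{MU}(\Sigma^{0,1}\nu(\Sigma^\infty BC_p)).
\]
Here I use that the identity $\Sigma^{0,1}\nu(Z)=\Sigma^{-1}\nu(\Sigma Z)$ matches the definition of $\Bock$.

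It remains to assemble the total fiber of $\Phi^{C_p}$ as the direct sum in the statement and to identify its composite with $\res^{C_p}_e$ as $\tr_{MU}$. The $\bbS$-summand maps in directly via the transfer, and restricting gives $p$. For the odd summand, the map $MU^{\otimes\bullet+1}\otimes\Sigma^\infty BC_p\to(MU_{C_p}^{\otimes\bullet+1})^{C_p}$ is zero on homotopy. It therefore factors canonically, after applying $\Tot\tau_{<\star}\tau_{\geq0}$, through the $\tau$-shifted object, and this provides the map from $\Bock_\star^{MU}(\Sigma^{0,1}\nu(\Sigma^\infty BC_p))$. I would check that it splits the extension from the previous step, and that postcomposing with $\res^{C_p}_e$ gives exactly the $\tau$-factorization of $\widetilde{\tr}$. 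The latter holds because both are the canonical factorizations of the same map, $\res\circ\tr=\widetilde{\tr}$, through $\tau$. Together these give the diagram with the row a fiber sequence.
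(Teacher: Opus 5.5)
Your proposal is correct and is essentially the paper's proof: it uses the same isotropy separation sequence and the same quotient by the transfer image of the $\bbS$-summand (your $X'$ is the paper's $(MU_{C_p}^{\otimes\bullet+1})^{C_p}/\tr(MU^{\otimes\bullet+1})$). It then runs the same two short-exactness arguments under $\Tot\tau_{<\star}\tau_{\geq0}$, and your splitting step is what the paper packages as its $3\times3$ diagram of cofiber sequences. (Two small remarks: evenness of $\pi_\ast(\Phi^{C_p}MU_{C_p})^{\otimes k}$ is not actually needed, since it is forced by the short exact sequence $0\to\pi_\ast X'\to\pi_\ast(\Phi^{C_p}MU_{C_p})^{\otimes k}\to\pi_{\ast-1}(MU^{\otimes k}\otimes\Sigma^\infty BC_p)\to0$; and your $\res\circ\tr=p$ argument on the torsion-free groups $\pi_\ast MU^{\otimes k}$ usefully justifies the injectivity of the transfer, which the paper only asserts.)
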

\begin{proof}
Consider the fiber sequence of cosimplicial spectra
\begin{center}\begin{tikzcd}[column sep=small]
MU^{\otimes\bullet+1} \otimes\Sigma^\infty_+ BC_p\ar[r,"\tr"]&(MU_{C_p}^{\otimes\bullet+1})^{C_p}\ar[r]&(\Phi^{C_p}MU)^{\otimes\bullet+1}\ar[r]& MU^{\otimes \bullet+1}\otimes\Sigma\Sigma^\infty_+ BC_p
\end{tikzcd}\end{center}
arising from isotropy separation for $MU_{C_p}^{\otimes\bullet+1}$. As $MU_{C_p}^{\otimes\bullet+1}$ has homotopy groups concentrated in even degrees \cite{dieck1970bordism,hausmann2022global}, the map
\[
\widetilde{\tr}\colon MU^{\otimes\bullet+1}\otimes\Sigma^\infty BC_p \to (MU_{C_p}^{\otimes\bullet+1})^{C_p}
\]
vanishes on homotopy groups, and therefore induces a boundary map
\[
\Bock_\star^{MU}(\Sigma^{0,1}\nu(\Sigma^\infty BC_p)) \to \Bock_\star^{MU_{C_p}}(\nu(\bbS_{C_p}))
\]
sitting in the desired diagram. We must show that the row is a fiber sequence. We may quotient out the isotropy separation sequence by $MU^{\otimes\bullet+1}$ to obtain a fiber sequence
\begin{center}\begin{tikzcd}[column sep=small]
MU^{\otimes\bullet+1}\otimes\Sigma^\infty BC_p\ar[r]&\dfrac{(MU_{C_p}^{\otimes\bullet+1})^{C_p}}{ \tr(MU^{\otimes\bullet+1})}\ar[r]&(\Phi^{C_p}MU)^{\otimes\bullet+1}\ar[r]&MU^{\otimes\bullet+1}\otimes \Sigma \Sigma^\infty BC_p
\end{tikzcd}.\end{center}
As the transfer $\pi_\ast MU^{\otimes\bullet+1} \to \pi_\ast (MU_{C_p}^{\otimes\bullet+1})^{C_p}$ is an injection and $\pi_\ast (MU_{C_p}^{\otimes\bullet+1})^{C_p}$ is concentrated in even degrees, applying $\Tot\tau_{<\star}\tau_{\geq 0}(\bs)$ to the last three terms of this produces a fiber sequence
\begin{center}\begin{tikzcd}
\dfrac{\Bock_\star^{MU_{C_p}}(\nu(\bbS_{C_p}))}{\tr(\Bock_\star^{MU}(\nu(\bbS)))}\ar[r,"\Phi^{C_p}"]&\Bock_\star^{\Phi_{C_p}MU}(\nu(\bbS))\ar[r]&\Bock_\star^{MU}(\Sigma^{1,1}\nu(\Sigma^\infty BC_p))
\end{tikzcd}.\end{center}
Rotating this fiber sequence, we obtain a diagram
\begin{center}\begin{tikzcd}[column sep=small]
\Bock_\star^{MU}(\nu(\bbS))\ar[d]\ar[r,equals]&\Bock_\star^{MU}(\nu(\bbS))\ar[r]\ar[d]&0\ar[d]\\
\Bock_\star^{MU}(\nu(\bbS)) \oplus \Bock_\star^{MU}(\Sigma^{0,1}\nu(\Sigma^\infty BC_p))\ar[r]\ar[d]&\Bock_\star^{MU_{C_p}}(\nu(\bbS_{C_p}))\ar[r,"\Phi^{C_p}"]\ar[d]&\Bock_\star^{\Phi^{C_p}MU}(\nu((\bbS))\ar[d,equals]\\
\Bock_\star^{MU}(\Sigma^{0,1}\nu(\Sigma^\infty BC_p))\ar[r]&\dfrac{\Bock_\star^{MU_{C_p}}(\nu(\bbS_{C_p}))}{\tr(\Bock_\star^{MU}(\nu(\bbS)))}\ar[r,"\Phi^{C_p}"]&\Bock_\star^{\Phi_{C_p}MU}(\nu(\bbS))
\end{tikzcd}\end{center}
in which the columns are cofiber sequences and top and bottom rows are fiber sequences. Thus the middle row is a fiber sequence as claimed.
\end{proof}

As $MU_{C_p}$ is the underlying $C_p$-spectrum of a globally ultracommutative ring spectrum \cite{schwede2018global}, there are two naturally occuring maps $MU \to \Phi^{C_p}MU$ of ring spectra: the unit map
\[
\eta = \Phi^{C_p}\circ\infl_e^{C_p}\colon MU \to (MU_{C_p})^{C_p} \to \Phi^{C_p}MU
\]
arising from the global structure of $MU$, and the geometric norm
\[
\psi\colon MU \to \Phi^{C_p}MU = \Phi^{C_p}\left(N_e^{C_p}MU \to MU_{C_p}\right)
\]
obtained from the norm on $MU_{C_p}$ after passing to geometric fixed points. However general theory says that any two maps of ring spectra induce the same map on associated Adams-type spectral sequence, and so the diagram
\begin{center}\begin{tikzcd}[row sep=small]
&\Bock_\star^{\Phi^{C_p}MU}(\nu(Y))\ar[dd,equals]\\
\Bock_\star^{MU}(\nu(Y))\ar[ur,"\eta"]\ar[dr,"\psi"']\\
&\Bock_\star^{\Phi^{C_p}MU}(\nu(Y))
\end{tikzcd}\end{center}
commutes for any spectrum $Y$. In fact this diagram consists of equivalences, as $\Phi^{C_p}MU$ is a free $MU$-module, although we do not need this fact.

\begin{samepage}
\begin{theorem}\label{thm:musynthkp}
There is a preferred lift in the diagram
\begin{center}\begin{tikzcd}
&\Omega^\infty_{\ast,\ast}\bbone_{MU}\oplus \Sigma^{0,1}\nu(\Sigma^\infty BC_p)\ar[d,"\Omega^\infty_{\ast,\ast}\tr_{MU}"]\\
\Omega^\infty_{\ast,\ast}\bbone_{MU}\ar[r,"\varphi_p"]\ar[ur,dashed]&\Omega^\infty_{\ast,\ast}\bbone_{MU}
\end{tikzcd},\end{center}
which for $(\ast,\ast) = (0,\infty)$ agrees with that in \cref{thm:lkpg} for $A=S^0$ and $L$ the identity.
\end{theorem}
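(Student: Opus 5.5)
We follow the proof of \cref{thm:newskp}, with \cref{lem:fibmu} in place of \cref{lem:fibf2}. First, by the fiber sequences of \cref{rmk:fibers}, it suffices to treat bidegrees $(0,m)$ for $1\leq m\leq\infty$. For $m=\infty$ we take the lift of \cref{thm:lkpg} with $A=S^0$ and $L$ the identity. The task is then to build compatible lifts for finite $m$. Via \cref{lem:ebock}, these are maps
\[
\Omega^\infty\Bock_m^{MU}(\nu(\bbS))\to \Omega^\infty\left(\Bock_m^{MU}(\nu(\bbS))\oplus\Bock_m^{MU}(\Sigma^{0,1}\nu(\Sigma^\infty BC_p))\right)
\]
lifting $\varphi_p$ through $\tr_{MU}$.

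Next we construct the two maps into the middle term of \cref{lem:fibmu}. Since $MU_{C_p}$ is the underlying $C_p$-spectrum of a globally ultracommutative ring, it carries norms $N_e^{C_p}MU\to MU_{C_p}$. Combining these with functoriality of the norm gives a map of cosimplicial spaces
\[
N_e^{C_p}\colon\Omega^\infty(MU^{\otimes\bullet+1})\to\Omega^\infty\left((MU_{C_p}^{\otimes\bullet+1})^{C_p}\right).
\]
Its restriction is the map $P$, built from $p$th powers and the multiplication on $MU$, and its geometric fixed points are the resolution map induced by the geometric norm $\psi$. The global structure also gives an inflation $\infl_e^{C_p}\colon MU^{\otimes\bullet+1}\to (MU_{C_p}^{\otimes\bullet+1})^{C_p}$. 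Its restriction is the identity, and its geometric fixed points are induced by the unit $\eta$. Applying $\Tot\tau_{<m}\tau_{\geq0}$ yields two maps
\[
N_e^{C_p},\ \infl_e^{C_p}\colon \Omega^\infty\Bock_m^{MU}(\nu(\bbS))\to\Omega^\infty\Bock_m^{MU_{C_p}}(\nu(\bbS_{C_p})).
\]
Their difference composed with $\res^{C_p}_e$ is $\varphi_p$ on $\Omega^\infty_{0,m}\bbone_{MU}$.

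The key point is that the two maps agree after composing with $\Phi^{C_p}$: one is induced by $\psi$, the other by $\eta$, and the commutative triangle recorded just before the theorem identifies $\Bock_\star(\psi)$ with $\Bock_\star(\eta)$. So $N_e^{C_p}-\infl_e^{C_p}$ becomes null after $\Phi^{C_p}$. Since the row in \cref{lem:fibmu} is a fiber sequence, this difference lifts to the fiber $\Bock_m^{MU}(\nu(\bbS))\oplus\Bock_m^{MU}(\Sigma^{0,1}\nu(\Sigma^\infty BC_p))$. Composing with the diagonal map $\tr_{MU}$ of \cref{lem:fibmu} recovers $\res^{C_p}_e\circ(N_e^{C_p}-\infl_e^{C_p})=\varphi_p$, which gives the factorization.

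The main obstacle is making the null-homotopy $\Phi^{C_p}N_e^{C_p}\simeq\Phi^{C_p}\infl_e^{C_p}$ preferred, and compatible as $m$ varies and with the $m=\infty$ lift. Our plan is to take the homotopy at the level of the cosimplicial objects before truncation. The $\Tot$-tower identifies $\Bock_\infty$ with the limit of the $\Bock_m$. The homotopy identifying $\Bock(\psi)$ with $\Bock(\eta)$ then comes from the standard contraction argument: any two ring maps $MU\to\Phi^{C_p}MU$ give homotopic maps of cobar resolutions, via the extra codegeneracy on $(\Phi^{C_p}MU)\otimes MU^{\otimes\bullet+1}$. This homotopy is natural in the truncation level, so the lifts are compatible in $m$. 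At $m=\infty$ we would compare it with the isotropy-separation lift of \cref{thm:lkpg}: on $\Omega^\infty_{0,\infty}$ the relevant geometric fixed points agree on the nose through $\Phi^{C_p}N_e^{C_p}\bbS\simeq\bbS$. We would also need to check that the boundary map in \cref{lem:fibmu} agrees with the splitting $\Sigma^\infty_+BC_p\simeq\bbS\oplus\Sigma^\infty BC_p$.
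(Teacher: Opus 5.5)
Your proposal is correct and is essentially the paper's proof. The paper reruns the argument of \cref{thm:newskp}: reduce to bidegrees $(0,m)$, map in by the norm and by inflation, and lift the difference through the fiber sequence of \cref{lem:fibmu}, with the identification $\Bock_\star(\psi)\simeq\Bock_\star(\eta)$ replacing the agreement of the Tate Frobenius and unit $\bbF_2\to\bbF_2^t$. The coherence points you raise are asserted ``by construction'' in the paper. The one thing to be careful about is that $\psi^{\otimes\bullet+1}$ and $\eta^{\otimes\bullet+1}$ are not homotopic as maps of cosimplicial spectra, since already $\psi\not\simeq\eta$ in cosimplicial degree $0$; so the homotopy has to be produced after totalization, as in the extra-codegeneracy argument you sketch.
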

\end{samepage}
\begin{proof}
With everything in place, the proof is now identical to that of \cref{thm:newskp}, only using \cref{lem:fibmu} in place of \cref{lem:fibf2} and the above discussion in place of the fact that the Tate Frobenius and unit map $\bbF_2 \to \bbF_2^t$ agree.
\end{proof}

\begin{cor}\label{cor:musynthkp}
The transfer induces compatibly split surjections
\begin{align*}
\bbZ_{(p)}\otimes \pi_{s,w}\nu_{MU}(\Sigma^\infty B\Sigma_p) &\to \bbZ_{(p)}\otimes \pi_{s,w-1}\bbone_{MU},\\
\pi_{s,w}\nu_{MU}(\Sigma^\infty B\Sigma_p) \otimes C(\tau^m) &\to \pi_{s,w-1}C(\tau^m)
\end{align*}
for all $m,s,w\geq 1$.
\end{cor}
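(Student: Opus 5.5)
We derive this from \cref{thm:musynthkp} the same way \cref{cor:f2kphtpy} follows from \cref{thm:newskp}, and then replace $C_p$ by $\Sigma_p$ as in \cref{prop:sigmap}.

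First, take homotopy groups of the lift in \cref{thm:musynthkp}. The map $\varphi_p$ is an equivalence after looping once, so in bidegrees $(w,m)$ with $s\geq 1$ the composite $\tr_{MU}\circ H$ induces $-1$ on $\pi_s\Omega^\infty_{w,m}$. This gives compatible split surjections
\[
\pi_{s,w}\bigl((\bbone_{MU}\oplus\Sigma^{0,1}\nu(\Sigma^\infty BC_p))\otimes C(\tau^m)\bigr)\to\pi_{s,w}(\bbone_{MU}\otimes C(\tau^m)),
\]
and likewise for $m=\infty$. Note that $\pi_{s,w}\Sigma^{0,1}Z=\pi_{s,w-1}Z$ up to the indexing conventions, and the target index is shifted accordingly. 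The splittings are compatible across $m$ because the lifts are.

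Next, remove the $\bbone_{MU}$ summand. On that summand, $\tr_{MU}$ is the degree-$p$ map $p\colon\bbone_{MU}\to\bbone_{MU}$.

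\textbf{Case $m=\infty$, after $\bbZ_{(p)}\otimes$.} In positive stems, $\bbZ_{(p)}\otimes\pi_{s,\ast}\bbone_{MU}$ is $p$-power torsion. It is a finite extension of $\tau$-power torsion and $\tau$-inverted classes $\pi_s\bbS$, hence finite $p$-primary in each bidegree. So, exactly as in \cref{prop:sigmap}, postcompose the splitting with the projection $q$ onto the $\nu(\Sigma^\infty BC_p)$ summand. The resulting composite $\bbZ_{(p)}\otimes\pi\to\bbZ_{(p)}\otimes\pi$ equals $-1$ minus $p$ times an endomorphism. On a finite $p$-group this is an isomorphism, so composing with its inverse gives a splitting.

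\textbf{Case $m<\infty$.} For $s\geq1$, the groups $\pi_{s,w}C(\tau^m)$ are finite $p$-groups after $p$-localizing. For the statement to hold without $\bbZ_{(p)}$, one should note that the odd-primary parts are killed appropriately, or else read the statement $p$-locally. In either reading the same argument applies: subtract the component through $p$ and invert $-1-p(\cdot)$, which is invertible since $p$ is topologically nilpotent on these groups. I expect this nilpotence point, namely that $p$ acts nilpotently on $\pi_{s,w}C(\tau^m)$ for $s\geq1$, to be the main thing to verify. I would deduce it from the $\tau$-Bockstein filtration: $\pi_{s,\ast}C(\tau)$ is Adams--Novikov $\Ext$ in positive stems, which is torsion and $p$-locally $p$-power torsion in each degree. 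The claim then follows by induction on $m$ using the fiber sequences of \cref{rmk:fibers}.

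Finally, pass from $C_p$ to $\Sigma_p$. We have the $p$-local equivalence $i\circ\tr\colon\Sigma^\infty_+B\Sigma_p\to\Sigma^\infty_+B\Hol(C_p)$, which is compatible with transfers. It is a stable map, so it induces equivalences of $p$-local synthetic analogues. After $p$-localization the synthetic transfer for $C_p$ factors through $\Hol(C_p)$, and hence through $\Sigma_p$. We therefore transport the splittings along $\nu_{MU}(i\circ\tr)$. The transfer $\Sigma^\infty B\Sigma_p\to\bbS$ vanishes in $MU$-homology for the same degree reason as before, so the synthetic lift through $\tau$ is defined. Compatibility of the synthetic lifts with $i\circ\tr$ follows from naturality of $\nu_{MU}$ and of the boundary construction defining $\tr_{MU}$.
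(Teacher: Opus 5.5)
Your argument is correct and follows the paper's route: take homotopy groups of the lift in \cref{thm:musynthkp}, remove the $\bbone_{MU}$ summand (on which $\tr_{MU}$ is multiplication by $p$) using that the relevant groups are $p$-power torsion in positive stems, and pass from $C_p$ to $\Sigma_p$ through $\Hol(C_p)$ as in \cref{prop:sigmap}; the paper states this in two lines, and you supply the nilpotence details it leaves implicit. One correction to your aside: the second line can only hold $p$-locally, because prime-to-$p$ torsion in $\pi_{s,w-1}C(\tau^m)$ cannot be hit. For instance, when $p=2$ the source is $2$-primary, while $\Ext^{1,4}_{MU_\ast MU}(MU_\ast,MU_\ast)\cong\bbZ/12$ has $3$-torsion. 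So ``killing'' those parts is not an option, and your $p$-local reading is the correct one.
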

\begin{proof}
With $\Sigma^\infty_+ BC_p$ in place of $\Sigma^\infty B\Sigma_p$, this follows from \cref{thm:musynthkp}. The further reduction to $\Sigma^\infty B\Sigma_p$ after $p$-localization follows just as in the proof of \cref{prop:sigmap}.
\end{proof}

\begingroup
\raggedright
\bibliography{refs}
\bibliographystyle{alpha}
\endgroup

\end{document}